\newcommand{\norm}[1]{\left\Vert#1\right\Vert}
\newcommand{\abs}[1]{\left\vert#1\right\vert}
\DeclareMathOperator*{\esssup}{ess\,sup}
\newcommand{\diff}{\,\mathrm{d}}
\newcommand{\diffns}{\mathrm{d}}
\begin{document}

\title{Strong solutions of some one-dimensional SDEs with random and unbounded drifts}\thanks{We thank an associate editor and the anonymous referees for their feedback and suggestions, one of which led to Theorem \ref{thm:flow}. We also thank Daniel Ocone for his comments.}

\author[]{Olivier Menoukeu-Pamen}
\author[]{Ludovic Tangpi}

\abstract{ 
In this paper, we are interested in the following one dimensional forward stochastic differential equation (SDE)
\[
\diffns X_{t}=b(t,X_{t},\omega)\diffns t +\sigma\diffns B_{t},\,\,\,0\leq t\leq T,\,\,\,\text{ }X_{0}=\,x\in \mathbb{R},
\]
where the driving noise $B_{t}$ is a $d$-dimensional Brownian motion. The drift coefficient $b:[0,T] \times\Omega\times \mathbb{R}\longrightarrow
\mathbb{R}$ is Borel measurable and can be decomposed into a deterministic and a random part, i.e., $b(t,x,\omega) = b_1(t,x) + b_2(t,x,\omega)$. 
Assuming that $b_1$ is of spacial linear growth and $b_2$ satisfies some integrability conditions, we obtain the existence and uniqueness of a strong solution.
The method we use is purely probabilitic and relies on Malliavin calculus. 
As byproducts, we obtain Malliavin differentiability of the solutions, provide an explicit representation for the Malliavin derivative and prove existence of weighted Sobolev differentiable flows.
}

\date{\today}
\keyAMSClassification{60G15, 60G60, 60H07, 60H10}
\keyWords{Malliavin calculus, random drift, measurable drift, compactness criterion, explicit representation, Sobolev differentiable flow.}
\maketitleludo
\setcounter{page}{1} %


\section{Introduction}
The first main result of the present paper concerns wellposedness of a class of stochastic differential equations of the form
\begin{equation}
\label{eq:sde into}
	dX_t = \left(b_1(t, X_t) + b_2(t,X_t,\omega)\right)\,dt + \sigma\diffns B_t, \quad 0\le t\le T,\,\, X_0=x\in \mathbb{R}
\end{equation}
when the drift coefficient $b_1:[0,T]\times  \mathbb{R}\to \mathbb{R}$ is merely measurable and of linear growth in the second variable, $b_2$ is bounded (in $x$), sufficiently smooth and has bounded derivative (see the assumptions in Section \ref{sectionmainres11}) and $\sigma\in \mathbb{R}^d$.
The driving noise $B$ is the canonical process $B_t(\omega) = \omega_t$ on the canonical space $\Omega := C([0,T],\mathbb{R}^d)$ equipped with the Wiener measure $P$ and the completion $({\cal F})_{t\in [0,T]}$ of the natural filtration of $B$.

Since the work of \citet{Ito46}, it is well known that the SDE \eqref{eq:sde into} admits a unique strong solution when the drift $b$ is globally Lipschitz continuous and of linear growth.
That is, there exists a unique (up to  indistinguishability) square integrable process that is $\mathbb{F}$-adapted and satisfies \eqref{eq:sde into}.
SDEs are widely applied in stochastic control, in physics, and as a modeling tool, in a number of applied sciences including biology, finance and engineering.
Often, the Lipschitz continuity condition is too stringent, as for instance in modeling of switching systems (see e.g. \citet{Del14} and \citet{Hei-Laks94}) or in models of interacting finite (or infinite) particle systems (see e.g. \citet{Kond-Kons-Roeck04} and \citet{Alb-Kon-Roec03}) where the drift $b$ is typically discontinuous.
While existence of weak solutions of \eqref{eq:sde into} is a direct consequence of Girsanov theory, the construction of strong solutions is usually a delicate matter.
Note in addition that in the above mentioned applications existence of a solution $X$ as function of the driving noise (i.e. strong solution) is crucial.

Strong solutions of SDEs with rough coefficients have been extensively studied in the past decades, starting with the seminal works by \citet{Zvon74} and \citet{Ver79} and including other important contributions e.g. by \citet{GyK96}, \citet{GyM01}; \citet{KR05} or \citet{FreFlan13}.
These works eventually build on the analysis of the Kolmogorov partial differential equation associated to the SDE or on pathwise uniqueness arguments and benefit from the Yamada-Watanabe theorem.  

Let us further refer to works by \citet{Fang-Zhang05,FIZ07} and more rencently \citet{Cham-Jabin17} on uniqueness of SDEs.
See also \citet{Da07} for a path by path uniqueness result.
A purely probabilistic approach, initiated by \citet{Pro07} and \citet{MBP10}, and further developed by \citet{MMNPZ13} rather uses the Malliavin calculus of variations and white noise analysis for the construction of solutions (see also \citet{BMBPD17}).
This method does not rely on pathwise uniqueness arguments, but rather derives it as a consequence of uniqueness in law and strong existence.

As a common feature in the aforementioned works, the drift coefficient $b$ is assumed  deterministic (i.e. not depending on $\omega$).
This is due to the need to guarantee a Markovian property of the solution, which is paramount for the success of the PDE methods (in finite dimension). 
As suggested by an anonymous referee, let us mention however that it seems conceivable that, to some extend, the PDE methods  could work when the random part $b_2$ of the drift is seen as a forcing in the equation, but this remains an open question.
Random drift also constitute a clear impediment to the success of the probabilistic method 
due to the "integration by parts estimates" used in several steps of the proofs.
Regarding the growth of the drift coefficient, let us mention that to the best of our knowledge, the only works considering SDEs with discontinuous drifts and with linear growth are the articles by \citet{EnSc89}, \citet{Nil92} and \citet{MeMo17}.
Unbounded drifts are also treated under $L^q_t(L^p_x)$-type conditions, see e.g. \cite{KR05,FeFlan13,FGP10}. 
All these works consider deterministic coefficients.

In this paper we consider SDEs with coefficients $b$ of the form
\begin{equation}
\label{eq:b inro}
	b(t,x,\omega) = b_1(t,x) + b_2(t,x,\omega)
\end{equation}
for some random non-anticipating stochastic function $b_2$, with $b_1$ a Borel function of spacial linear growth.
In particular, $b_2$ is possibly path-dependent.
When $b_1$ is the gradient of a given function, such SDEs can be seen as dynamics of a diffusion in a random potential see e.g. \citet{Kond-Kons-Roeck04}.

SDEs with random coefficients when the drift coefficient does not have the special structure \eqref{eq:b inro} have been studied in the literature. 
For example, \citet{OcPa89} use the generalized It\^o-Ventzell formula for anticipating integrands to study a Stratonovich-type SDE, where the initial condition and drift coefficient are allowed to anticipate the future of driving Brownian motion. They show that the Stratonovich-type SDE with anticipating coefficients has a unique non exploding Malliavin differentiable solution.
They assume that the initial condition and drift coefficient are Malliavin smooth and the drift is further sublinear  with respect to the spatial coordinate and has derivatives of polynomial growth. Assuming that the drift coefficient satisfies a stochastic Lipschitz condition, \citet{HLN97} show existence and uniqueness of a class of SDE with random coefficients. 
They do not prove Malliavin differentiability of the solution in their work.

Our method draws from the Malliavin calculus approach of \citet{Pro07} and \citet{MMNPZ13} but avoids the use of white noise analysis.
In particular, we prove existence and uniqueness of a strong solution and further derive Malliavin differentiability and non-explosion of the solution. The main difficulties in deriving Malliavin smoothness of the solution to the SDE comes from the fact that we do not require any spatial smoothness of the coefficient.  
The estimates used to derive Malliavin differentiability of solutions allow to further obtain existence of a stochastic flow of dynamical systems driven by \eqref{eq:sde into}, and this hints at applications to new stochastic transport equations as in the work by \citet{MNP2015} (see also \citet{FGP10}, \citet{FeFlan13} and \citet{MenouA17}).


Let us now give a precise statement of the main results of the paper.

\subsection{Probabilistic setting}
Let $T \in (0,\infty)$ and $d \in \mathbb{N}$ be fixed and consider a probability space $(\Omega, {\cal F}, P)$ equipped with the completed filtration $({\cal F}_t)_{t\in[0,T]}$ of a $d$-dimensional Brownian motion $B$.
Throughout the paper, the product $\Omega \times [0,T]$ is endowed with the predictable $\sigma$-algebra. Subsets of $\mathbb{R}^k$, $k\in\mathbb{N}$, are always endowed with the Borel $\sigma$-algebra induced by the Euclidean norm $|\cdot|$. The interval $[0,T]$ is equipped with the Lebesgue measure. Unless otherwise stated, all equalities and inequalities between random variables and processes will be understood in the $P$-almost sure and $P\otimes \diffns t$-almost sure sense, respectively.
For $p \in [1, \infty]$ and $k\in\mathbb{N}$, denote by ${\cal S}^p(\mathbb{R}^k)$
the space of all adapted continuous processes $X$
with values in $\mathbb{R}^k$ such that
$\norm{X}_{{\cal S}^p(\mathbb{R}^k)}^p :=  E[(\sup\nolimits_{t \in [0,T]}\abs{X_t})^p] < \infty$,
and by ${\cal H}^p(\mathbb{R}^{k})$ the space of all predictable processes $Z$ with values in $\mathbb{R}^{k}$ such that $\norm{Z}_{{\cal H}^p(\mathbb{R}^{k})}^p := E[(\int_0^T\abs{Z_u}^2\diffns u)^{p/2}] < \infty$.

\subsection{Main results}\label{sectionmainres11}
In this section, we present the main results of the paper.
Refer to the beginning of Section \ref{sec:SDE} for details regarding Malliavin calculus.
Let us consider the following conditions
\begin{enumerate}[label = (\textsc{A1}), leftmargin = 30pt]
	\item It holds $b= b_1 + b_2$, where the function $b_1:[0,T]\times \mathbb{R}\to \mathbb{R}$ is Borel measurable and there is $k_1\ge 0$ such that for all $x \in \mathbb{R}$, $$|b_1(t,x)|\le k(1 + |x|).$$ 
	The function $b_2:[0,T]\times \mathbb{R}\times \Omega\to \mathbb{R}$ is adapted, such that
	\begin{enumerate}
	\item $b_2(t,\cdot,\omega) \in C^1(\mathbb{R})$ and there exists a random variable $M_2(\omega)\ge0$ such that \begin{equation}
		|\frac{\partial}{\partial x}b_2(t,x,\omega)|+|b_2(t,x,\omega) |\leq M_2(\omega) \text{ for all }(t,x,\omega) \in [0,T]\times \mathbb{R}\times \Omega,
	\end{equation}  
	with
	\begin{equation}
		\label{eq:expo moment b2}
			b_2^{\mathrm{exp}}:=E\left[e^{C|M_2(\omega)|^2}\right]<\infty
	\end{equation}
	and $C:=48T\max_{i}\frac{d\sigma_i^2}{(\sigma_1^2+\cdots +\sigma_d^2)^2} $
	The constant $C$ depends on $ k, d$ and $\sigma$ (see Lemma \ref{lemmaproexpr1} for its explicite form).
	\item The random variable $b_2(s,x,\cdot)$ is Malliavin differentiable for every $(s,x) \in[0,T]\times \mathbb{R}$ and there exists a process $\tilde{M}_2(t,\omega)$ such that its Malliavin derivative $D_tb_2(s,x,\omega)$ satisfies
$$
|D_tb_2(s,x,\omega)|\leq \tilde{M}_2(s,t,\omega) \quad P\otimes dt\text{-a.s., for all } (t,x) \in [0,T]\times \mathbb{R},
$$
with
	\begin{equation}
	\label{eq: power moment b2}
		b_2^{\mathrm{power}}:= \sup_{0\le s\le T}E\left[\left(\int_0^T|\tilde{M}_2(s,t,\omega) |^2\diffns t \right)^4 \right]<\infty
	\end{equation}
	and there exist constants $C, \alpha'>0$ such that 
	$$
	E[|D_tb_2(s,x,\omega)-D_{t'}b_2(s,x,\omega)|^4]\leq C|t'-t|^{\alpha}
	$$
		\end{enumerate}
			\label{a1}
\end{enumerate}
\begin{enumerate}[label = (\textsc{A2}), leftmargin = 30pt]
	\item $\sigma \in \mathbb{R}^d$ and $|\sigma|^2>0$.\label{a2}
\end{enumerate}
\begin{theorem}\label{thmainres1r}
	Assume that conditions \ref{a1}-\ref{a2} hold.
 	Then there exists a unique global strong solution $X \in {\cal S}^2(\mathbb{R})$ to the SDE 
 	\begin{equation}
		\diffns X_{t}=b(t, X_{t},\omega) \diffns t+ \sigma\diffns B_{t},\,\,\,0\leq t\leq T,\,\,\,\text{ }X_{0}=\,x\in \mathbb{R}.  \label{eqmain1r}
	\end{equation}
\end{theorem}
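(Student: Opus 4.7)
I would follow the purely probabilistic Malliavin strategy of \citet{Pro07} and \citet{MMNPZ13}, adapted to accommodate the random part $b_2$.

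\emph{Step 1 --- Weak existence and uniqueness in law.} Set $Y_t := x + \sigma B_t$ on $(\Omega,\mathcal{F},P)$ and
\[
Z_t := \exp\Bigl(\int_0^t \frac{\sigma}{|\sigma|^2}\cdot b(s,Y_s,\omega)\,\diffns B_s - \tfrac12\int_0^t \frac{|b(s,Y_s,\omega)|^2}{|\sigma|^2}\,\diffns s\Bigr).
\]
The bound $|b|^2 \le 2k^2(1+|Y_s|^2) + 2M_2^2$, together with Gaussian exponential estimates on $\int_0^T |Y_s|^2\,\diffns s$ (iterating on sufficiently short subintervals) and the exponential moment \eqref{eq:expo moment b2}, yields Novikov's condition. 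Therefore $\diffns Q := Z_T\,\diffns P$ is a probability, under which $\tilde B_t := B_t - \int_0^t \sigma|\sigma|^{-2} b(s,Y_s,\omega)\,\diffns s$ is a Brownian motion and $Y$ solves \eqref{eqmain1r}. Inverting the Girsanov transform on any other weak solution gives uniqueness in law.

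\emph{Step 2 --- Strong existence by approximation and Malliavin compactness.} Mollify $b_1$ in the spatial variable and $b_2$ in $(x,\omega)$ to obtain smooth bounded adapted drifts $b^n = b_1^n + b_2^n \to b$, uniformly satisfying~\ref{a1}. Classical Lipschitz theory produces strong solutions $X^n$, each Malliavin differentiable with derivative solving a linear Volterra equation. Applying Girsanov to transfer functionals of $X^n$ onto functionals of $Y$ under $P$, and performing a Malliavin integration by parts to absorb $\partial_x b_1^n$ (which blows up with $n$) against a Gaussian kernel, I would derive uniform estimates
\[
\sup_n \|X^n\|_{\mathcal{S}^2}^2 + \sup_n \sup_{s\in[0,T]} E\bigl[|D_s X^n_t|^p\bigr] < \infty, \qquad \sup_n E\bigl[|D_s X^n_t - D_{s'} X^n_t|^2\bigr] \le C|s-s'|^{\beta}
\]
for some $p>2$ and $\beta>0$. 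The exponential moment \eqref{eq:expo moment b2} controls the Girsanov densities, while \eqref{eq: power moment b2} and the H\"older condition on $D_t b_2$ deliver the time-regularity of the Malliavin trace. The Da~Prato--Malliavin--Nualart compactness criterion then gives relative compactness of $\{X^n_t\}$ in $L^2(\Omega,\mathcal{F}_t)$, so a subsequential limit $X_t$ is $\mathcal{F}_t$-adapted. Passing to the limit in $X^n_t = x + \int_0^t b^n(s,X^n_s,\omega)\,\diffns s + \sigma B_t$ (local uniform convergence of $b_1^n$ on the support of the compact family of laws, dominated convergence for $b_2^n$) yields a strong solution $X \in \mathcal{S}^2(\mathbb{R})$ of \eqref{eqmain1r}. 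Finally, by Cherny's theorem, strong existence together with uniqueness in law implies pathwise uniqueness, hence the unique strong solution.

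\emph{Main obstacle.} The genuine difficulty is the random drift $b_2$: when one differentiates the approximating equation in the Malliavin sense, $D_s X^n_t$ picks up an inhomogeneity $\int_s^t D_s b_2(r, X^n_r,\omega)\,\diffns r$ that has no spatial counterpart and is therefore inaccessible to the standard ``integration by parts against a Gaussian kernel'' trick used in the Proske/Meyer-Brandis approach for deterministic drifts. This term must be controlled pathwise through the assumptions on $D_t b_2$, and obtaining the H\"older-in-$s$ estimate on $D_s X^n_t$ \emph{uniformly in $n$} --- the input required by the compactness criterion --- is precisely where the fourth-moment bound \eqref{eq: power moment b2} and the H\"older continuity of $D_t b_2$ enter in an essential way.
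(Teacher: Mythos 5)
Your overall architecture (Girsanov weak existence, smooth approximation of $b_1$, uniform Malliavin estimates, Da~Prato--Malliavin--Nualart compactness, then uniqueness from weak uniqueness plus strong existence) matches the paper's, and your diagnosis of where \eqref{eq: power moment b2} and the H\"older condition on $D_tb_2$ enter is accurate. But there is a genuine gap at the point where you identify the limit. You propose to pass to the limit in $X^n_t = x + \int_0^t b^n(s,X^n_s,\omega)\,\diffns s + \sigma B_t$ using ``local uniform convergence of $b_1^n$ on the support of the compact family of laws''. This is not available: $b_1$ is merely Borel measurable, so a mollification $b_1^n$ converges to $b_1$ only almost everywhere, never locally uniformly, and a.e.\ convergence of $b_1^n$ combined with $L^2$-convergence of $X^n_t$ at each fixed $t$ does not let you pass to the limit in $\int_0^t b_1^n(s,X^n_s)\,\diffns s$ (both the integrand and the process move with $n$, and you have no equicontinuity in $x$). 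The paper circumvents this entirely: it never passes to the limit in the equation. Instead it shows by a Cameron--Martin/Girsanov computation (testing against Dol\'ean-Dade exponentials ${\cal E}(\int\dot\varphi\,\diffns B)$, which span a dense subspace of $L^2(\Omega)$) that $h(X^{x,n}_t)$ converges \emph{weakly} to $E[h(X^x_t)\mid\mathcal{F}_t]$ for every polynomially growing $h$, where $X^x$ is the weak solution already constructed in Step 1; compactness upgrades this to strong convergence, and then the identity $h(E[X^x_t\mid\mathcal{F}_t])=E[h(X^x_t)\mid\mathcal{F}_t]$ for all Lipschitz $h$ forces $X^x_t$ to be $\mathcal{F}_t$-measurable. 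This weak-limit identification is the real engine of the proof and is missing from your plan; without it (or a substitute such as a Krylov-type estimate), the approximation step does not close.

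Two secondary points. First, all the compactness estimates (your H\"older bound on $s\mapsto D_sX^n_t$, and the paper's Lemma on $D_tX^{x,n}_s$) only hold for $T$ small, because the Girsanov/exponential-series arguments require $T\le T_1(k,d,\sigma)$; the theorem claims a \emph{global} solution, so you still need the pasting argument of the paper's Step 5 (crucially, $T_1$ does not depend on the initial condition, and Lemma \ref{lemmaproexpr1} controls the restart points). Second, mollifying $b_2$ in $(\omega,x)$ is both unnecessary ($b_2$ is already $C^1$ in $x$ with bounded derivative and Malliavin differentiable by \ref{a1}) and delicate (preserving adaptedness and the bounds on $D_tb_2$ under an $\omega$-mollification is not automatic); the paper keeps $b_2$ fixed and perturbs only $b_1$. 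Your appeal to uniqueness in law ``by inverting the Girsanov transform'' also needs care here, since the drift is itself a functional of the driving noise; the paper's pathwise uniqueness proof again goes through Cameron--Martin shifts rather than a bare law-uniqueness statement.
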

The proof is given in Sections \ref{sec:SDE} and \ref{sec:SDE.Malliavin}.
Under the conditions of Theorems \ref{thmainres1r}, we show that the unique strong solution of the SDE is Malliavin differentiable and has a Sobolev differentiable flow.
Namely, we have
\begin{theorem}\label{thm:Mall.diff.SDE}
	Assume that \ref{a1}-\ref{a2} hold.
	Let $X$ be the unique strong solution to the SDE \eqref{eqmain1r}.
	It holds $X_t \in {\cal D}^{1,2}(\mathbb{R})$ for all $t \in [0,T]$.
\end{theorem}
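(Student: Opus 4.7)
The plan is to obtain $X_t \in {\cal D}^{1,2}$ by an approximation-and-compactness argument built around the closability of the Malliavin derivative operator on Wiener space. Concretely, I would regularise the irregular part $b_1$ of the drift by mollification, apply classical Malliavin-calculus theory to the smoothed SDE, and then pass to the limit using a uniform Malliavin-Sobolev bound on the approximating solutions together with an $L^2$-stability estimate which identifies the limit with the true solution $X$ provided by Theorem~\ref{thmainres1r}.

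More precisely, let $\rho_n$ be a standard mollifier on $\mathbb{R}$ and set $b_1^n(t,\cdot):=b_1(t,\cdot)*\rho_n$ and $b^n:=b_1^n+b_2$. Then $b_1^n$ is smooth in space, still satisfies $|b_1^n(t,x)|\le k(1+|x|)$ uniformly in $n$, and $b_1^n\to b_1$ pointwise almost everywhere. Under \ref{a1}(b), classical Picard iteration yields a unique strong solution $X^n \in {\cal S}^2$ to the SDE with drift $b^n$, with $X^n_t\in{\cal D}^{1,2}$ for every $t$, and its Malliavin derivative solves the linear equation
\begin{equation*}
	D_s X^n_t = \sigma + \int_s^t \bigl[\partial_x b_1^n(r,X^n_r) + \partial_x b_2(r,X^n_r,\omega)\bigr]\, D_s X^n_r\,\diffns r + \int_s^t (D_s b_2)(r,X^n_r,\omega)\,\diffns r,\quad s\le t.
\end{equation*}
Combining Girsanov's theorem, the uniform linear-growth bound on $b_1^n$, and the $L^2$-stability embedded in the proof of Theorem~\ref{thmainres1r}, one obtains $X^n_t\to X_t$ in $L^2(\Omega)$ as $n\to\infty$.

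The main obstacle is to establish the uniform Malliavin-Sobolev bound
\begin{equation*}
	\sup_n E\left[\int_0^T |D_s X^n_t|^2\,\diffns s\right] < \infty.
\end{equation*}
A naive Grönwall argument applied to the linear equation above produces the exponential factor $\exp\bigl(\int_s^t \partial_x b_1^n(r,X^n_r)\,\diffns r\bigr)$, which cannot be controlled uniformly in $n$ since $\sup_x|\partial_x b_1^n|$ blows up. Following the probabilistic strategy pioneered by \citet{Pro07} and \citet{MMNPZ13}, I would instead work under the Girsanov-transformed measure $Q^n$ with density ${\cal E}\bigl(-\int_0^T \sigma^{-1}b^n(r,X^n_r,\omega)\,\diffns B_r\bigr)$, under which $X^n$ coincides with the driftless process $x+\sigma\tilde B$ for a $Q^n$-Brownian motion $\tilde B$. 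Under $Q^n$, the contribution of $\partial_x b_1^n(r,X^n_r)\,D_s X^n_r$ can be rewritten via a Malliavin integration-by-parts formula on the Gaussian space, using essentially $\partial_x b_1^n(r,x+\sigma\tilde B_r)=\sigma^{-1} D_r[b_1^n(r,x+\sigma\tilde B_r)]$, in a form that no longer involves the spatial derivative of $b_1^n$ but only $b_1^n$ itself. The resulting estimate depends only on the linear growth of $b_1^n$ (uniform in $n$) together with the exponential moment \eqref{eq:expo moment b2} of $M_2$, whose sharp constant $C$ is precisely chosen so that the $L^p$-moments of the Girsanov density and of the exponentials emerging from the integration by parts remain finite. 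The additional inhomogeneous term $\int_s^t (D_s b_2)(r,X^n_r)\,\diffns r$, which is absent in the purely deterministic-drift setting treated in \citet{MMNPZ13} and which constitutes the genuinely new difficulty here, is handled by Cauchy-Schwarz and the moment bound \eqref{eq: power moment b2} on $\tilde M_2$.

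Once the uniform bound is secured and $X^n_t\to X_t$ in $L^2$, the closability of $D$ (equivalently, the weak-lower-semicontinuity characterisation of ${\cal D}^{1,2}$, as in Nualart's book) yields $X_t\in{\cal D}^{1,2}$, and any weak cluster point in $L^2(\Omega\times[0,T])$ of the sequence $(D_\cdot X^n_t)_n$ can be identified as $D_\cdot X_t$, providing at once an explicit representation of the Malliavin derivative for use in subsequent results.
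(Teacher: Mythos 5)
Your proposal is correct and follows essentially the same route as the paper: approximate $b_1$ by smooth drifts, write the Malliavin derivative of the approximating solutions as the explicit solution of a linear equation, obtain a uniform $L^2$-bound by Girsanov plus a Gaussian integration-by-parts estimate that trades $\partial_x b_{1,n}$ for $b_{1,n}$ itself (Proposition~\ref{propmainEstimate} in the paper), control the inhomogeneous $D_sb_2$ term via Cauchy--Schwarz and \eqref{eq: power moment b2}, and conclude by closability of $D$ together with the $L^2$-convergence $X^n_t\to X_t$ from the proof of Theorem~\ref{thmainres1r}. The only ingredients you leave implicit are the H\"older-in-$t$ estimate on $D_tX^n_s$ that drives the compactness underlying that $L^2$-convergence, and the induction/pasting over small time intervals (via Lemma~\ref{lemmaproexpr1}) needed to pass from small $T$ to arbitrary $T$.
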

The proof of this result is given in Section \ref{sec:SDE.Malliavin} where moment bounds and an explicit representation of the Malliavin derivative is also provided.
Denote by $X^{s,x}$ the solution of the SDE \eqref{eq:sde into} with initial condition $X_s=x$.
As is well-known, the solution $X^{s,x}$ may not belong to the Sobolev space $W^{1,p}(\mathbb{R}, dx)$, $p>1$.
Thus, following the intuition of \citet{MNP2015} we will show that $X^{s,x}$ belongs to a weighted Sobolev space.

Let $w:\mathbb{R}\to (0,\infty)$ be a Borel-measurable (weight) function such that
\begin{equation*}
	\int_\mathbb{R}e^{c|x|^2}w(x)\diffns x<\infty
\end{equation*}
for every $c\ge0$.
We denote by $W^{1,p}(\mathbb{R}, w)$ the weighted Sobolev space of functions $u:\mathbb{R}\to\mathbb{R}$ such that, it holds
\begin{equation*}
	||u||_{1, p, w} := \left(\int_\mathbb{R}|u(x)|^pw(x)\diffns x \right)^{1/p} + \left(\int_\mathbb{R}|u'(x)|^pw(x)\diffns x \right)^{1/p}<\infty,
\end{equation*}
where $u'$ is the weak derivative of $u$.
\begin{theorem}
\label{thm:flow}
	Assume that \ref{a1}-\ref{a2} hold.
	Let $X^{x}$ be the unique solution of \eqref{eq:sde into}.
	Let $T$ be small enough.
	Then for every $p
	\ge2$, the map $x \mapsto X^{x}_t$ belongs to $L^2(\Omega, W^{1,p}(\mathbb{R}, w))$.
\end{theorem}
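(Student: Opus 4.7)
The approach is approximation-compactness. Since $b_2$ is already $C^1$ in space by \ref{a1}, only $b_1$ requires regularization. Let $\{b_1^n\}_{n\ge 1}$ denote smooth spatial mollifications of $b_1$ preserving the linear growth bound $|b_1^n(t,x)|\le k(1+|x|)$, set $b^n := b_1^n + b_2$, and let $X^{x,n}$ be the unique strong solution of \eqref{eqmain1r} with drift $b^n$ and initial condition $x$. Because $b^n$ is $C^1$ in $x$, classical SDE theory yields a $C^1$ flow $x\mapsto X^{x,n}_t$, and in one dimension the first variation process $J^{x,n}_t := \partial_x X^{x,n}_t$ admits the explicit representation
\begin{equation*}
	J^{x,n}_t = \exp\!\left(\int_0^t \partial_x b^n(s, X^{x,n}_s, \omega)\,\diffns s\right).
\end{equation*}

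The crux of the proof consists of the uniform-in-$n$ bounds
\begin{equation*}
	\sup_n E\bigl[|X^{x,n}_t|^{p}\bigr] \le C_p(1+|x|^{p}), \qquad \sup_n E\bigl[|J^{x,n}_t|^{p}\bigr] \le C_p\, e^{K|x|^2},
\end{equation*}
for constants $C_p, K$ depending only on $T$, $\sigma$, $k$, $b_2^{\mathrm{exp}}$, and $b_2^{\mathrm{power}}$. The first follows from the uniform linear growth of $b^n$ by a Gronwall--BDG argument, with the exponential moment of $M_2$ absorbing the contribution of $b_2$. The second is the main obstacle: the formula for $J^{x,n}_t$ involves $\partial_x b_1^n$, for which no uniform bound in $n$ is available. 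To handle this I would apply Girsanov's theorem, defining $Q^n$ on ${\cal F}_T$ by $\diffns Q^n / \diffns P = {\cal E}\bigl(-\sigma^{-1}\int_0^\cdot b^n(s, X^{x,n}_s)\,\diffns B_s\bigr)_T$, under which $\tilde{B}^n := B + \sigma^{-1}\int_0^\cdot b^n(s, X^{x,n}_s)\,\diffns s$ is a Brownian motion and $X^{x,n}_t = x + \sigma\tilde{B}^n_t$. Rewriting
\begin{equation*}
	E_P\bigl[|J^{x,n}_t|^{p}\bigr] = E_{Q^n}\!\left[({\cal E}^n_T)^{-1}\exp\!\left(p\int_0^t \partial_x b^n(s, x + \sigma\tilde{B}^n_s, \omega)\,\diffns s\right)\right],
\end{equation*}
I would then perform a Malliavin/Cameron--Martin integration by parts under the Brownian law of $\tilde{B}^n$ to transfer the spatial derivative of $b_1^n$ onto a Gaussian stochastic weight, reducing the integrand to an expression involving only $b_1^n$ (without derivative), $b_2$, and its Malliavin derivative $Db_2$. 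Uniform control of the resulting exponential moments then follows from the linear growth of $b_1^n$ together with \eqref{eq:expo moment b2} and \eqref{eq: power moment b2}; the smallness of $T$ guarantees Novikov's condition for ${\cal E}^n$ and the finiteness of the various exponential moments produced by the integration by parts. I expect this integration-by-parts estimate, and in particular its interaction with the random, path-dependent drift $b_2$, to be the main technical obstacle.

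Once these uniform bounds are established, multiplication by $w(x)$ and spatial integration give
\begin{equation*}
	\sup_n E\!\left[\int_{\mathbb{R}}\bigl(|X^{x,n}_t|^{p} + |J^{x,n}_t|^{p}\bigr)w(x)\,\diffns x\right] < \infty,
\end{equation*}
using the hypothesis $\int_{\mathbb{R}} e^{c|x|^2} w(x)\,\diffns x<\infty$ for every $c\ge 0$. Hence $\{X^{\cdot,n}_t\}_n$ is bounded in $L^{p}(\Omega, W^{1,p}(\mathbb{R},w))$ and, since $p\ge 2$ and $P$ is a probability measure, also in $L^{2}(\Omega, W^{1,p}(\mathbb{R},w))$. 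Extracting a weakly convergent subsequence and using the strong convergence $X^{x,n}\to X^x$ in ${\cal S}^2(\mathbb{R})$ obtained in the proof of Theorem \ref{thmainres1r} to identify the weak limit with the map $x\mapsto X^x_t$ yields $X^{\cdot}_t \in L^{2}(\Omega, W^{1,p}(\mathbb{R},w))$ as claimed.
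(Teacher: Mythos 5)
Your proposal follows essentially the same route as the paper: approximate $b_1$ by smooth compactly supported $b_{1,n}$, write the first variation process explicitly as $\exp(\int_s^t b'_{1,n}+b_2'\,\diffns u)$, remove the derivative of $b_{1,n}$ via Girsanov plus a Gaussian integration-by-parts estimate, integrate the resulting $e^{C|x|^2}$ bound against the weight $w$, and identify the weak limit of $\partial_x X^{s,x,n}_t$ in $L^2(\Omega, L^p(\mathbb{R},w))$ as the weak derivative of $X^{s,x}_t$. The one step you leave sketched --- transferring $\partial_x b_1^n$ onto a Gaussian weight after the change of measure --- is exactly where the paper invokes the Davie-type estimate of Proposition \ref{propmainEstimate} together with the exponential series expansion from Lemma \ref{lemmainres1r}, so your identification of that as the main technical obstacle is accurate.
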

It would be desirable to have multi-dimensional versions of Theorems \ref{thmainres1r} \ref{thm:Mall.diff.SDE} and \ref{thm:flow}.
The main obstacles to the extension of the method presented in this paper to the multi-dimensional case are presented in Remark \ref{rem:muldi-d}.
Let us give some examples of drift coefficients satisfying condition \ref{a1}.
{

\begin{example}
	The example of a random drift term of the form $b_1(t,x) + \varphi(t,x,B_t)$, where $\varphi:[0,T]\times\mathbb{R}\times\mathbb{R}^d\to \mathbb{R}$ is a Lipschitz continuous functions (in the second and third variables) seems not to be covered by the existing literature.
	It is consistent with our assumptions since the Malliavin derivative of $\varphi(t,x,B_t)$ is bounded, and the exponential moment condition \eqref{eq:expo moment b2} is satisfied, at least for $T$ small enough, or for arbitrary $T$ when $\varphi$ is bounded.

	A more general example is the path dependent drift case  $b(t,\omega, x):=b_1(t,x) + \varphi(t,x,B_{0:t})$, where $B_{0:t}$ denotes the path of $B$ up to $t$, and $\varphi:[0,T]\times \mathbb{R}\times C([0,T],\mathbb{R}^d)\to \mathbb{R}$ is bounded and Lipschitz continuous.
	It follows e.g. from \citet[Proposition 3.2]{Che-Nam} that $\varphi(t,x,B_{0:t})$ has bounded Malliavin derivatives for all $t$.
\end{example}


The remainder of the paper is structured as follows:
The next section is mainly dedicated to the proof of Theorem \ref{thmainres1r}. 
As a byproduct of our method, we obtain Malliavin differentiability of the solution.
In addition, we derive various results concerning the Malliavin derivative of the solution, including moment estimates and a representation in terms of the space-time local time integral.
In the appendix we present a few auxiliary results to make the paper self-contained.

\section{Existence and Uniqueness}
\label{sec:SDE}

\subsection{Some notation}
In this section, we prove existence and uniqueness of strong solutions for SDEs.
Since Malliavin calculus will play an important role in our arguments, we briefly introduce the spaces of Malliavin differentiable random variables and stochastic processes
${\cal D}^{1,p}(\mathbb{R}^k)$ and ${\cal L}^{1,p}_a(\mathbb{R}^k)$, $p\ge 1$. For a thorough treatment of the theory of Malliavin calculus we refer to \citet{Nua06}.
Let ${\cal M}$ be the class of smooth random variables $\xi=(\xi^1,\dots,\xi^l)$ of the form
\begin{equation*}
	\xi^i = \varphi^i\left(\int_0^T h^{i1}_s\diffns W_s, \dots, \int_0^T h^{in}_s\diffns W_s \right),
\end{equation*}
where $\varphi^i$ is in the space $C_{\text{poly}}^\infty(\mathbb{R}^{n};\mathbb{R})$  of infinitely continuously differentiable functions whose partial derivatives have polynomial growth, $h^{i1}, \dots, h^{in} \in L^2([0,T]; \mathbb{R}^d)$ and $n\ge 1$.
For every $\xi$ in ${\cal M}$ let the operator $D = (D^1, \dots, D^d):{\cal M}\to L^2(\Omega\times [0,T];\mathbb{R}^d)$ be  given by
\begin{equation*}
	D_t\xi^i := \sum_{j=1}^n\frac{\partial \varphi^i}{\partial x_{j}}\left(\int_0^T h^{i1}_s\diffns W_s, \dots, \int_0^T h^{in}_s\diffns W_s \right)h^{ij}_t, \quad 0\le t\le T,\,\, 1\le i\le l,
\end{equation*}
and the norm $	\norm{\xi}_{1,p} := (E[\abs{\xi}^p + \int_0^T\abs{D_t\xi}^p\diffns t  ] )^{1/p}$.
As shown in \citet{Nua06}, the operator $D$ extends to the closure ${\cal D}^{1,p}(\mathbb{R}^l)$ of the set ${\cal M}$ with respect to the norm $\norm{\cdot}_{1,p}$.
A random variable $\xi$ is Malliavin differentiable if $\xi \in {\cal D}^{1,p}(\mathbb{R}^l)$ and we denote by $D_t\xi$ its Malliavin derivative.
Denote by ${\cal L}^{1,p}_a(\mathbb{R}^{l})$ the space of processes $Y \in {\cal H}^2(\mathbb{R}^{l})$ such that
$Y_t \in {\cal D}^{1,p}(\mathbb{R}^{l})$ for all $t \in [0,T]$, the process $DY_t$ admits a square integrable progressively measurable version and
\begin{equation*}
	\norm{Y}_{{\cal L}^{1,p}_a(\mathbb{R}^l)}^p := \norm{Y}_{{\cal H}^p(\mathbb{R}^l)} + E\left[\int_0^T\int_0^T\abs{D_r Y_t}^p\diffns r\diffns t \right] < \infty.
\end{equation*}

\subsection{Proof of Theorem \ref{thmainres1r}}
In the whole of this section, we assume that conditions \ref{a1} and \ref{a2} are satisfied. 
The proof of the Theorem \ref{thmainres1r} is given in 5 steps.
In the first step, 
we show that there exists a process $X^x$ satisfying the SDE \eqref{eqmain1r} in the weak sense. 
That is, there is a Brownian motion $\tilde B$ such that $(X^x_t, \tilde B_t)$ is a weak solution to the SDE \eqref{eqmain1r}. Note however that the solution might not be adapted to the filtration $(\mathcal{F}_t)_{t\in [0, T]}$. 
Let us mention that if $X_t$ is adapted to that filtration then $X_t$ has an explicit representation as a function of $B_t$ (see for example \cite{LP04, MBpr04}) and for any other stochastic basis  $(\tilde{\Omega}, \tilde{\mathcal{F}},(\tilde{\mathcal{F}}_t)_{t \in [0,T]}, \tilde{P},\tilde{B})$, the same representation holds with $\tilde{B}_t$ instead of $B_t$ and thus $X_t$ is $(\tilde{\mathcal{F}}_t)_{t\in [0,T]}$-adapted. The latter indicates that $X_t$ is a strong solution of \eqref{eqmain1r}.
 	
In the second step, for $T$ small, given a sequence $b_n:=b_{1,n}+b_2$ such that $b_{1,n}: [0,T] \times \mathbb{R} \rightarrow \mathbb{R}$, $n\ge 1$ are smooth coefficients with compact support
 and converging a.e. to $b_1$, we show using relative compactness (see Lemma \ref{lemmainres1r}) that for each $0 \leq t \leq T$ the sequence of corresponding strong solutions $(X^{x,n}_{t})_{n \geq 1}$, of the SDEs
\begin{align}\label{eqmainr}
 	\diffns X^{x,n}_t=b_n(t,X^{x,n}_t,\omega)\diffns t + \sigma\cdot \diffns B_t, \,\,0\leq t\leq T,\,\,\, X^{x,n}_0=x  \in \mathbb{R}, \, n \geq 1,
\end{align}
is relatively compact in $L^2(P;\mathbb{R})$. Let us mention that existence of a unique strong solution to the SDE \eqref{eqmainr} is guaranteed by \cite[Theorem 1.1]{OcPa89}, see also \cite{KaShr88}.

In step 3, we show that for each $0 \leq t \leq T$ the above sequence $(X^{x,n}_{t})_{n \geq 1}$ converges weakly to $E\Big[X^x_t|\mathcal{F}_t\Big]$ in the space $L^2(\Omega,\mathcal{F}_t,P)$. This with step 2 allow to deduce that $(X^{x,n}_{t})_{n \geq 1}$ converges strongly to $E\Big[X^x_t|\mathcal{F}_t\Big]$ in the space $L^2(\Omega,\mathcal{F}_t,P)$.  We also obtain from step 2 that $E\Big[X^x_t|\mathcal{F}_t\Big]$ is Malliavin smooth, see Subsection \ref{sec:SDE.Malliavin}.

In step 4, we prove that $X^x_t $ is $\mathcal{F}_t$-measurable by showing that 
 $E\Big[X^x_t|\mathcal{F}_t\Big]=X^x_t $. The proof is completed by showing uniqueness. 
 
In the last step, we use a pasting argument to show that the result holds for all $T>0$.
In fact, the linear growth assumption on $b_1$ and integrability assumption on $b_2$ ensure by the use of Gronwall's lemma that if the solution exists on a small interval then it does not explode. Hence the main task in this step is to show that $E[|D_sX^{x,n}_{t}|^2]\leq C$, uniformly in $n$ for $0\leq s\leq t\leq T$.

\paragraph{2.2.1. Weak existence.}
The following result can be seen as a slight generalization of a result by V.E. Bene\v{s}, compare \cite{Ben71, KleLip12}. 
Therein (and throughout the paper) we denote by ${\cal E}(\int q\diffns B)$ the Dol\'ean-Dade exponential
\begin{equation*}
	{\cal E}\left(\int q\diffns B\right)_t := \exp\left(\int_0^t q_u\diffns B_u - \frac 12 \int_0^t |q_u|^2\diffns u \right)
\end{equation*}
for a given progressive process $q$ such that $\int_0^T|q_u|^2\,du<\infty$.
\begin{lemma}\label{bengen}
	Let $u$ be given by 
 	\begin{align}\label{eqgirs1}
 		u_i=\frac{\sigma_i}{\sigma_1^2+\ldots +\sigma_d^2 }b.
\end{align}
Then the process $Z:=\mathcal{E}\left( \int u(r,\sigma\cdot B_r,\omega)\diffns B_r\right)$ is a martingale.
\end{lemma}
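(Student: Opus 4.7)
The plan is to establish that $Z$ is a true martingale by verifying a \emph{subdivided Novikov condition}: I will exhibit a partition $0 = t_0 < t_1 < \cdots < t_n = T$ such that
\[
E\left[\exp\left(\tfrac{1}{2}\int_{t_i}^{t_{i+1}} |u(r,\sigma\cdot B_r,\omega)|^2 \diffns r\right)\right] < \infty
\quad \text{for each } i = 0,\ldots,n-1.
\]
Once this holds, a standard localization argument (apply Novikov on $[t_i,t_{i+1}]$ starting from $\mathcal{F}_{t_i}$, so that each increment $Z_{t_{i+1}}/Z_{t_i}$ is the terminal value of a uniformly integrable martingale) yields that $Z$ is a uniformly integrable martingale on all of $[0,T]$.

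The key estimate comes from plugging in the decomposition of $b$. Since $|u|^2 = b^2/(\sigma_1^2+\cdots+\sigma_d^2)$, together with the spatial linear growth $|b_1(r,x)| \le k(1+|x|)$ and the pointwise bound $|b_2(r,x,\omega)| \le M_2(\omega)$ from \ref{a1}, I will obtain an inequality of the form
\[
|u(r,\sigma\cdot B_r,\omega)|^2 \;\le\; \frac{K_1}{|\sigma|^2}\bigl(1+|\sigma\cdot B_r|^2\bigr) + \frac{K_2}{|\sigma|^2}\, M_2(\omega)^2,
\]
with explicit constants $K_1,K_2$ depending only on $k$. Integrating over a subinterval of length $\delta$ and splitting the resulting exponential by Cauchy--Schwarz gives
\[
E\left[\exp\left(\tfrac{1}{2}\int_{t_i}^{t_{i+1}}|u_r|^2\diffns r\right)\right]
\;\le\;
E\bigl[\exp\bigl(\alpha\,\delta\,\sup_{r\le T}|\sigma\cdot B_r|^2\bigr)\bigr]^{1/2}\;
E\bigl[\exp\bigl(\beta\,\delta\, M_2(\omega)^2\bigr)\bigr]^{1/2},
\]
for constants $\alpha,\beta$ depending on $k,d,\sigma$.

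Both factors on the right are finite for the right choice of $\delta$: the Brownian factor is controlled by the classical Gaussian tail estimate $E[\exp(c\sup_{r\le T}|\sigma\cdot B_r|^2)] < \infty$ whenever $c$ is strictly less than $1/(2T|\sigma|^2)$ (up to constants from the reflection/Doob inequality), which is achieved by taking the mesh $\delta$ small enough. The $M_2$ factor is finite precisely because of the exponential-moment assumption \eqref{eq:expo moment b2}: the constant $C = 48T\max_i d\sigma_i^2/|\sigma|^4$ appearing in the hypothesis is tailored so that $\beta\,\delta \le C$ on every subinterval when $\delta$ is chosen in accordance with the Brownian bound. Covering $[0,T]$ by $n = \lceil T/\delta\rceil$ such subintervals then concludes the argument.

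The main obstacle, and the only nonroutine part, is the careful bookkeeping of constants to show that the Cauchy--Schwarz exponent $\beta\delta$ is dominated by the specific $C$ prescribed in \eqref{eq:expo moment b2}, so that the hypothesis of the lemma is exactly what is needed. The factor $48$, the $\max_i$, and the dependence on $d$ come from the combination of (i) the factor $2$ in $|b|^2\le 2|b_1|^2+2|b_2|^2$, (ii) the factor $\sigma_i^2/|\sigma|^4$ appearing in $|u_i|^2$, (iii) the Doob/reflection constant for $\sup|\sigma\cdot B|^2$, and (iv) the number of subintervals, which is why the bound is stated with the explicit constant rather than an abstract one.
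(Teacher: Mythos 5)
Your argument is correct, but it follows a genuinely different route from the paper's. You establish the martingale property via the subdivided Novikov condition: partition $[0,T]$ finely enough that Novikov's criterion holds on each piece (cf.\ \cite[Corollary 3.5.14]{KaShr88}), using $|u|^2=b^2/|\sigma|^2\le \frac{2}{|\sigma|^2}\big(2k^2(1+|\sigma\cdot B_r|^2)+M_2^2\big)$, the Gaussian tail of $\sup_{r\le T}|\sigma\cdot B_r|$ to control the linear-growth part once the mesh $\delta$ is of order $1/(k^2T)$, and the exponential moment \eqref{eq:expo moment b2} to control the $M_2$ part. This is in essence the classical Karatzas--Shreve proof of Bene\v{s}'s theorem augmented by the random bounded term, and the bookkeeping does close: after Cauchy--Schwarz the $M_2$-exponent is of order $\delta/|\sigma|^2$, which is dominated by $C\ge 48T/|\sigma|^2$ for any $\delta\le T$ --- though note that $C$ is not really ``tailored'' to this lemma (it is dictated by the compactness estimates of Lemma \ref{lemmainres1r}), and here any bound $E[e^{\varepsilon M_2^2}]<\infty$ with small $\varepsilon>0$ would suffice. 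The paper instead invokes the Klebaner--Liptser extension of the Bene\v{s} method \cite[Theorem 2.1]{KleLip12}: $Z$ is a nonnegative local martingale, hence a supermartingale with $E[Z_t]\le 1$, and a Gronwall estimate upgrades this to $E[Z_T]=1$; that route needs only $E[|M_2|^4]<\infty$. What each approach buys: yours is self-contained and elementary, relying on nothing beyond Novikov's criterion and Gaussian tails; the paper's is shorter and works under strictly weaker integrability of $M_2$ (polynomial rather than exponential moments), although under \ref{a1} both sets of hypotheses are available, so nothing is lost either way.
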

\begin{proof}
	The proof follows from that of \cite[Theorem 2.1]{KleLip12}. We know that the process $Z_t$ is a non negative local martingale and thus a supermartingale such that $E[Z_t]\leq 1$ for any $0\leq t\leq T$. 
	Using the same argument as in the proof of \cite[Theorem 2.1]{KleLip12}, one obtains the result by applying Gronwall's lemma provided that  $E\left[|M_2(\omega)|^4\right]<\infty$. 
	The later is true by assumption. 
\end{proof}
 	
The next lemma ensures weak existence, it is a simple adaptation of \cite[Proposition 5.3.6]{KaShr88}. 
\begin{lemma} 
\label{lem:weak existence}
	The SDE \eqref{eqmain1r} admits a weak solution $X^x_t$. 
\end{lemma}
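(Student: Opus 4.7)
The plan is to construct the weak solution by Girsanov's transformation, in the spirit of \cite[Proposition 5.3.6]{KaShr88}, using Lemma \ref{bengen} to legitimize the change of measure in the presence of our linearly growing and random drift.

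Working on the canonical space $(\Omega, \mathcal{F}, (\mathcal{F}_t), P)$ with canonical Brownian motion $B$, I would first write down the candidate solution as the driftless diffusion
\[
X^x_t := x + \sigma \cdot B_t, \qquad 0 \le t \le T,
\]
and then set
\[
u_t := \frac{\sigma}{|\sigma|^2}\, b(t, X^x_t, \omega),
\]
which matches \eqref{eqgirs1} componentwise. Because $b_1$ is Borel and of linear growth in $x$, $b_2$ is bounded in $x$ by $M_2(\omega)$, and $X^x$ is continuous and adapted, $u$ is progressively measurable and $\int_0^T |u_s|^2 \diffns s < \infty$ almost surely.

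The second step is to invoke Lemma \ref{bengen}: the exponential moment condition \eqref{eq:expo moment b2} on $M_2$, together with the linear growth of $b_1$, is exactly what is needed to push the Dol\'ean-Dade exponential $Z := \mathcal{E}\left(\int u\, \diffns B\right)$ from a nonnegative supermartingale to a true $P$-martingale with $E[Z_T]=1$. Define the equivalent measure $Q$ on $\mathcal{F}_T$ by $\diffns Q/\diffns P := Z_T$. By Girsanov's theorem, the process
\[
\tilde B_t := B_t - \int_0^t u_s \diffns s
\]
is a $d$-dimensional $(\mathcal{F}_t)$-Brownian motion under $Q$.

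The third and final step is to check that $(X^x, \tilde B)$ on $(\Omega, \mathcal{F}, Q, (\mathcal{F}_t))$ solves \eqref{eqmain1r} in the weak sense. By construction $\sigma \cdot u_t = b(t, X^x_t, \omega)$, so
\[
X^x_t = x + \sigma \cdot B_t = x + \sigma \cdot \tilde B_t + \int_0^t \sigma \cdot u_s \diffns s = x + \int_0^t b(s, X^x_s, \omega) \diffns s + \sigma \cdot \tilde B_t,
\]
which is the desired equation. Note that the random dependence of $b_2$ on $\omega$ is unaffected by the change of measure since $b_2$ is defined as a measurable functional on the underlying path space, and $P$ and $Q$ are equivalent on $\mathcal{F}_T$.

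The only potentially delicate point is the interaction of the random drift $b_2$ with the Girsanov change of measure: a priori $b_2(\cdot,\cdot,\omega)$ could carry enough randomness to destroy the martingale property of $Z$. However, this issue was already absorbed into Lemma \ref{bengen}, whose proof uses precisely the exponential integrability of $M_2$. Consequently, the present lemma reduces to the bookkeeping above and requires no pathwise-uniqueness input; the strong solution is constructed in the later steps of the proof of Theorem \ref{thmainres1r}.
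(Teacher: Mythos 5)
Your proposal is correct and follows essentially the same route as the paper: take the driftless process $x+\sigma\cdot B_t$, use Lemma \ref{bengen} to justify that the Dol\'ean-Dade exponential of $\int u\,\diffns B$ with $u_i=\sigma_i b/|\sigma|^2$ is a true martingale, and apply Girsanov to identify $(X^x,\tilde B)$ as a weak solution under the new measure $Q$. The only (immaterial) difference is that the paper phrases the construction on an auxiliary space carrying a Brownian motion $\hat B$ rather than directly on the canonical space.
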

 	
\begin{proof}
 	Let $(\Omega, \mathcal{F},P)$ be a probability space on which a $d$-dimensional Brownian motion $\hat{B}$ is given, and set $X^x_t:=x+\sigma\cdot\hat{B}_t, \,0\leq t\leq T$. 
 	By \ref{a1}, it follows from Lemma \ref{bengen} (see also \cite{BuFo1993, Ben71, KleLip12}) that the process $\mathcal{E}\Big(\int u(r,X_r^x,\omega)\diffns \hat{B}_r\Big)$ defines an equivalent probability measure $Q$ given by
 	$$
 	\frac{\diffns Q}{\diffns P}:=\mathcal{E}\left(\int u(r,X_r^x,\omega)\diffns \hat{B}_r \right)_T.
 	$$
 	In addition, Girsanov's theorem asserts that 
 	$B_t= \hat B_t - \int_0^tu(r,X_r^x,\omega)\diffns r$
 	is a $Q$-Brownian motion.
 	Therefore, 
 	\begin{equation}
 	\label{eq:weak equation}
 		X^x_{t}=x+\int_0^t\sigma \cdot u(s, X^x_{s},\omega) \diffns s +\sigma\cdot  B_{t} \quad Q\text{-a.s.}  ,\,\,\,0\leq t\leq T,
 	\end{equation}
	showing that $(X^x, B)$ is a weak solution to the SDE \eqref{eqmain1r} on the probability space $(\Omega, \mathcal{F},Q)$. 
\end{proof}

\paragraph{2.2.2. Approximation and compactness.} 	
\label{par:approximation}
Let $b_n=b_{1,n}+b_2$ be such that $b_{1,n}: [0,T] \times \mathbb{R} \rightarrow \mathbb{R}$, $n\ge 1$ are smooth coefficients with compact support and converging a.e. to $b_1$.
Denote by $X^{x,n}_{t}$ the unique strong solution to the SDE \eqref{eqmainr} with drift $b_n$.
The following result is key to the compactness argument.

\begin{lemma} \label{lemmainres1r}
	If $T\in (0,\infty)$ is small enough, the strong solution $X^{x,n}_{t}$ of the SDE \eqref{eqmainr} satisfies
 	$$
 		E \left[ | D_t X^{x,n}_s - D_{t'} X^{x,n}_s |^2 \right] \leq {\cal C}(\|\tilde{b}_1\|_{\infty},|x|^2, b_2^{\text{power}}) |t -t'|^{\alpha}
 	$$ 
 		for all $0 \leq t' \leq t \leq T$ and some $\alpha = \alpha(s) > 0$.
 	Moreover, 
 	$$
 		\sup_{0 \leq t \leq T} E \left[ | D_t X^{x,n}_s |^2 \right] \leq {\cal C}(\|\tilde{b}_1\|_{\infty},|x|^2,b_2^{\text{power}}),
 	$$
 	where the function ${\cal C}(\cdot, \cdot, \cdot): [0, \infty)^3 \rightarrow [0, \infty)$ is continuous and increasing in each components, $b_2^{\text{power}}$ defined in \eqref{eq: power moment b2} and
 	\begin{equation}
 	\label{eq:def b infty}
 		\|\tilde{b}_1\|_{\infty}:= \displaystyle \esssup \left\{\frac{|b_1(t,z)|}{1+|z|}: t \in [0,T], z \in \mathbb{R} \right\}. 
 	\end{equation}
\end{lemma}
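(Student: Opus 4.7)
The plan is to exploit the smoothness of $b_n$ to write a linear SDE for $D_t X^{x,n}_s$ with an explicit solution via variation of constants, and then to bound the resulting exponential uniformly in $n$ by combining Girsanov's theorem with an It\^o formula applied to a heat-potential that absorbs the (a priori unbounded) derivative $\partial_x b_{1,n}$.

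Since $b_n$ is smooth and $b_2$ is Malliavin-differentiable in $\omega$, the chain rule for Malliavin derivatives applied to \eqref{eqmainr} yields, for $0\le t\le s$,
\begin{equation*}
D_t X^{x,n}_s = \sigma + \int_t^s \partial_x b_n(u,X^{x,n}_u,\omega)\,D_t X^{x,n}_u\,du + \int_t^s (D_t b_2)(u,X^{x,n}_u,\omega)\,du,
\end{equation*}
and $D_t X^{x,n}_s = 0$ for $t>s$. Variation of constants then gives the representation
\begin{equation*}
D_t X^{x,n}_s = \sigma\,\Lambda^n_{t,s} + \int_t^s \Lambda^n_{r,s}\,(D_t b_2)(r,X^{x,n}_r,\omega)\,dr,\qquad \Lambda^n_{r,s} := \exp\!\Big(\int_r^s \partial_x b_n(u,X^{x,n}_u,\omega)\,du\Big),
\end{equation*}
so the whole estimate reduces to controlling moments of $\Lambda^n_{r,s}$ and of $(D_t b_2)(r,X^{x,n}_r,\omega)$ uniformly in $n$.

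The main difficulty is the uniform-in-$n$ exponential moment of $\Lambda^n$. Factor $\Lambda^n_{r,s}=\Lambda^{n,1}_{r,s}\cdot \Lambda^{n,2}_{r,s}$ along the splitting $b_n=b_{1,n}+b_2$. The factor $\Lambda^{n,2}_{r,s}\le e^{TM_2(\omega)}$ has every moment finite by \eqref{eq:expo moment b2}. For the serious term $\Lambda^{n,1}$, I would apply Lemma \ref{bengen}: under the equivalent measure $Q^n$ with density $\mathcal{E}(-\int u_n\,dB)_T$ (with $u_n$ as in \eqref{eqgirs1} with $b$ replaced by $b_n$), the process $X^{x,n}$ has the law of $x+\sigma\cdot\widetilde W$ for a $Q^n$-Brownian motion $\widetilde W$. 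Cauchy--Schwarz reduces bounding $E_P[(\Lambda^{n,1}_{r,s})^p]$ to bounding moments of the Girsanov density $dP/dQ^n$ (uniform in $n$ by the linear growth of $b_{1,n}$ combined with Lemma \ref{bengen}, for $T$ small enough) and expectations of $\exp(p\int_r^s \partial_y b_{1,n}(u,x+\sigma \widetilde W_u)\,du)$ under $Q^n$. The latter I would handle by It\^o's formula applied to $\phi_n(u,x+\sigma \widetilde W_u)$, where $\phi_n$ solves the backward heat equation $\partial_u\phi_n+\tfrac{|\sigma|^2}{2}\partial_{yy}\phi_n=\partial_y b_{1,n}$ with $\phi_n(s,\cdot)=0$; this rewrites the time integral as $-\phi_n(r,x+\sigma \widetilde W_r)-\int_r^s \partial_y\phi_n(u,x+\sigma \widetilde W_u)\,\sigma\,d\widetilde W_u$, and standard heat-kernel estimates bound $\phi_n$ and the quadratic variation of the stochastic integral solely in terms of $\|\tilde b_1\|_{\infty}$, $|x|^2$ and $T$, uniformly in $n$. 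Combined with Gaussian concentration, this yields, for $T$ small enough, $\sup_n E_P[(\Lambda^n_{r,s})^p]\le \mathcal{C}(\|\tilde b_1\|_{\infty},|x|^2,b_2^{\text{power}})$ for every $p\ge 1$.

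Substituting into the representation and using Cauchy--Schwarz together with \eqref{eq: power moment b2} to control the $D_t b_2$ contribution delivers the uniform $L^2$-bound. For the H\"older estimate (on $0\le t'\le t\le s$, the relevant range for the compactness argument), the decomposition
\begin{equation*}
D_t X^{x,n}_s - D_{t'} X^{x,n}_s = \sigma(\Lambda^n_{t,s}-\Lambda^n_{t',s}) - \int_{t'}^t \Lambda^n_{r,s}\,(D_{t'} b_2)(r,X^{x,n}_r,\omega)\,dr + \int_t^s \Lambda^n_{r,s}\bigl[(D_t b_2)-(D_{t'} b_2)\bigr](r,X^{x,n}_r,\omega)\,dr,
\end{equation*}
the identity $\Lambda^n_{t',s}-\Lambda^n_{t,s}=\Lambda^n_{t,s}(\exp(\int_{t'}^t \partial_x b_n\,du)-1)$ together with $|e^a-1|\le|a|e^{|a|}$, the estimates above applied to the sub-interval $[t',t]$ (producing a factor of order $|t-t'|^{1/2}$), and the $L^4$-H\"older assumption on $D_\cdot b_2$ from \ref{a1} deliver the claimed estimate with some exponent $\alpha=\alpha(s)>0$. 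The hard part is clearly the previous step: obtaining the uniform exponential bound on $\Lambda^{n,1}_{r,s}$ despite $\partial_x b_{1,n}$ being unbounded is exactly what forces the smallness of $T$ and what genuinely uses the structural hypothesis that $b=b_1+b_2$ with $b_1$ deterministic of linear growth and $b_2$ random but tame.
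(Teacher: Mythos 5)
Your overall architecture coincides with the paper's: the chain rule gives the linear equation for $D_tX^{x,n}_s$, variation of constants gives exactly the representation \eqref{MalliavinDerivativeEquationr}, the increment $D_tX^{x,n}_s-D_{t'}X^{x,n}_s$ is split into the same three pieces (the increment of the exponential, the integral over $[t',t]$, and the $t$-H\"older increment of $D_\cdot b_2$), and the reduction of $X^{x,n}$ to $x+\sigma\cdot B$ by inserting the Girsanov density is also the paper's mechanism. Your treatment of the $b_2$-factor and of the $D_tb_2$ contributions via \eqref{eq:expo moment b2}, \eqref{eq: power moment b2} and the H\"older hypothesis in \ref{a1} is likewise in line with the paper.

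The gap sits exactly at the step you yourself flag as the hard one. You propose to control $E[\exp(p\int_r^s\partial_yb_{1,n}(u,x+\sigma\cdot\widetilde W_u)\,du)]$ by solving the backward heat equation $\partial_u\phi_n+\tfrac{|\sigma|^2}{2}\partial_{yy}\phi_n=\partial_yb_{1,n}$, $\phi_n(s,\cdot)=0$, and asserting that standard heat-kernel estimates bound $\phi_n$ \emph{and} the quadratic variation of $\int\partial_y\phi_n\,\sigma\,d\widetilde W$ solely in terms of $\|\tilde b_1\|_\infty$, $|x|^2$ and $T$, uniformly in $n$. This fails for the gradient. After one integration by parts, $\phi_n(u,y)=\int_u^s\big((\partial P_{v-u})\ast b_{1,n}(v,\cdot)\big)(y)\,dv$ with $P_t$ the one-dimensional kernel of variance $t|\sigma|^2$; since $\int|\partial P_t(w)|(1+|w|)\,dw\le C(t^{-1/2}+1)$ one indeed gets $|\phi_n(u,y)|\le C\|\tilde b_1\|_\infty(1+|y|)(s-u)^{1/2}$. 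But $\partial_y\phi_n(u,y)=\int_u^s\big((\partial^2P_{v-u})\ast b_{1,n}(v,\cdot)\big)(y)\,dv$ and $\int|\partial^2P_t(w)|\,dw=2/(|\sigma|^2t)$, so the time integral carries a non-integrable $1/(v-u)$ singularity; because $b_1$ is merely measurable there is no uniform-in-$n$ H\"older modulus of $b_{1,n}$ to produce cancellation, and no uniform $L^\infty$ (nor exponentially integrable) bound on $\partial_y\phi_n$ is available. Without it the exponential moment of your stochastic integral cannot be closed, so the ``uniform exponential bound on $\Lambda^{n,1}$'' is not established. The paper circumvents precisely this point by expanding the exponential into a power series and invoking Proposition \ref{propmainEstimate} (a Davie-type estimate): the $n$-th moment of $\int_{t_0}^tb_{1,n}'(u,x+\sigma\cdot B_u)\,du$ is written as an iterated integral over the time simplex against a product of Gaussian kernels, and each spatial derivative is integrated by parts onto one of the two \emph{adjacent} kernels, costing only $(t_i-t_{i-1})^{-1/2}$ rather than $(t_i-t_{i-1})^{-1}$; this yields the bound $C^n(1+|x|^n)(n/2)!\,(t-t_0)^{n/2}$, whose exponential series converges for $T$ small. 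You would need to replace your PDE step by this (or by an $L^q_tL^p_x$/Khasminskii-type argument) for the proof to go through; the rest of your outline then matches the paper's.
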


The combination of Lemma \ref{lemmainres1r} and Corollary \ref{compactcrit} yields the following result:
\begin{corollary} \label{maincor}
	For each $0 \leq t \leq T$, with $T$ small enough, the sequence $(X^{x,n}_{t})_{n \geq 1}$, is relatively compact in $L^2(P;\mathbb{R})$.
\end{corollary}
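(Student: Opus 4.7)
The plan is to apply the compactness criterion stated in Corollary \ref{compactcrit} directly to the sequence $(X^{x,n}_t)_{n\geq 1}$. The compactness criteria used in this circle of ideas (in the spirit of Da Prato-Malliavin-Nualart) typically demand three ingredients for a sequence of Malliavin differentiable random variables to be relatively compact in $L^2(\Omega)$: a uniform $L^2$ bound on the random variables themselves, a uniform $L^2$ bound on their Malliavin derivatives, and a uniform Hölder-type modulus of continuity in the time variable for the Malliavin derivatives. Lemma \ref{lemmainres1r} provides the last two ingredients essentially for free, so the only genuine work is to verify the first one uniformly in $n$.

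First I would show that $\sup_{n\geq 1}E[|X^{x,n}_t|^2]<\infty$ for each fixed $t\in[0,T]$. Since the smooth compactly supported approximations $b_{1,n}$ can be taken to satisfy the same linear growth bound $|b_{1,n}(t,z)|\le k(1+|z|)$ as $b_1$ (e.g.\ via standard mollification together with a smooth truncation that only kicks in far away and is compensated for by a $1+|z|$-bound that is preserved), and since $b_2$ is controlled by the random variable $M_2(\omega)$, Itô's formula applied to $|X^{x,n}_t|^2$ together with the Burkholder-Davis-Gundy inequality and Gronwall's lemma yields a bound of the form $E[|X^{x,n}_t|^2]\leq C(1+|x|^2)(1+E[M_2^2])e^{CT}$, where $C$ depends only on $k$, $\sigma$ and $T$. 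The exponential moment condition \eqref{eq:expo moment b2} on $M_2$ ensures that $E[M_2^2]<\infty$, so the $L^2$-bound is indeed uniform in $n$.

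Second, Lemma \ref{lemmainres1r} gives uniformly in $n$ the two estimates
\begin{equation*}
\sup_{0\le t\le T}E[|D_tX^{x,n}_s|^2]\le \mathcal{C}(\|\tilde b_1\|_\infty,|x|^2,b_2^{\text{power}}), \qquad E[|D_tX^{x,n}_s-D_{t'}X^{x,n}_s|^2]\le \mathcal{C}(\ldots)|t-t'|^\alpha,
\end{equation*}
where the constant depends only on quantities that are preserved under the approximation (the linear growth constant of $b_1$, the initial point, and the Malliavin integrability of $b_2$). These estimates also guarantee that $X^{x,n}_t\in\mathcal{D}^{1,2}(\mathbb{R})$ uniformly.

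Feeding these three uniform bounds into Corollary \ref{compactcrit} yields the relative compactness of $(X^{x,n}_t)_{n\geq 1}$ in $L^2(P;\mathbb{R})$. I do not anticipate any genuine obstacle: the only point that requires mild care is the uniform-in-$n$ second moment bound, which uses that the approximation $b_{1,n}\to b_1$ can be arranged to preserve the linear growth constant, and that the constants in Lemma \ref{lemmainres1r} depend on $b_1$ only through $\|\tilde b_1\|_\infty$ rather than on the smoothness of $b_{1,n}$.
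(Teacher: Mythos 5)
Your proposal is correct and follows essentially the same route as the paper, which deduces the corollary in one line by combining the uniform Malliavin-derivative estimates of Lemma \ref{lemmainres1r} with the compactness criterion of Corollary \ref{compactcrit}. The only (harmless) difference is that you additionally verify the uniform bound $\sup_n E[|X^{x,n}_t|^2]<\infty$ via It\^o's formula and Gronwall's lemma, whereas the paper obtains that bound separately through a Girsanov change of measure (see the computation leading to \eqref{eqweaklim1}); since the stated hypotheses of Corollary \ref{compactcrit} involve only the derivative estimates, this extra care does not change the argument.
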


\begin{proof}[of Lemma \ref{lemmainres1r}]
	Since the Brownian motions are independent, applying the chain-rule formula for the Malliavin derivatives in the direction of the $i^{th}$ Brownian motion (see e.g. \cite{Nua06}) gives%
 	\begin{align}\label{malldifeqr}
 		D_t^iX^{x,n}_s &= \sigma_i+ \int_t^s D^i_tb_2(u,X^{x,n}_u,\omega)\diffns u+ \int_t^s \Big\{b_{1,n}'(u,X^{x,n}_u)+b_{2}'(u,X^{x,n}_u,\omega)\Big\}D_t^i X^{x,n}_u \diffns u, \,\,i=1,\ldots,d \,\, P\text{-a.s.}
 	\end{align}
 	for all $t \leq s \leq T$. Here $b_{1,n}'(t,x) := \frac{ \partial}{ \partial x} b_{1,n}(t,x) $ and $b_{2}'(t,x,\omega) := \frac{ \partial}{ \partial x} b_{2}(t,x,\omega) $ are  the spatial derivatives of $b_{1,n}$ and $b_2$, respectively. 
 	Solving \eqref{malldifeqr} explicitly gives
 	\begin{align} \label{MalliavinDerivativeEquationr}
 		D^i_tX^{x,n}_s &= e^{\int_t^s \{b_{1,n}'(u,X^{x,n}_u) +b_{2}'(u,X^{x,n}_u,\omega) \}\diffns u}\Big(\int_t^sD^i_tb_2(u,X^{x,n}_u,\omega)e^{-\int_t^u \{b_{1,n}'(r,X^{x,n}_r) +b_{2}'(r,X^{x,n}_r,\omega) \}\diffns r}\diffns u + \sigma_i \Big).
 	\end{align}
 			Let $ 0 \leq t' \leq t \leq s\leq T$. Using the above representation, we have
 			\begin{align}
 			& D^i_{t'} X^{x,n}_s - D^i_t X^{x,n}_s \notag\\
 			=&  e^{\int_{t'}^s\{b_{1,n}'(u,X^{x,n}_u) +b_{2}'(u,X^{x,n}_u,\omega) \} \diffns u}\Big(\int_{t'}^sD^i_{t'}b_2(u,\omega)e^{-\int_{t'}^u b_{2}'(r,X^{x,n}_r,\omega)\diffns r} e^{-\int_{t'}^u b_{1,n}'(r,X^{x,n}_r) \diffns r}\diffns u + \sigma_i\Big)\notag\\
 			& -e^{\int_t^s \{b_{1,n}'(u,X^{x,n}_u) +b_{2}'(u,X^{x,n}_u,\omega) \} \diffns u}\left(\int_t^sD^i_{t}b_2(u,X^{x,n}_u,\omega)e^{-\int_{t}^u b_{2}'(r,X^{x,n}_r,\omega)\diffns r}e^{-\int_t^u b_{1,n}'(r,X^{x,n}_r) \diffns r}\diffns u + \sigma_i\right)\notag\\
 			=&\sigma_i e^{\int_{t}^s\{b_{1,n}'(u,X^{x,n}_u) +b_{2}'(u,X^{x,n}_u,\omega) \} \diffns u}\left(e^{\int_{t'}^t\{b_{1,n}'(u,X^{x,n}_u) +b_{2}'(u,X^{x,n}_u,\omega) \} \diffns u}-1\right)\notag\\
 			& +\int_{t'}^sD^i_{t'}b_2(u,X^{x,n}_u,\omega)e^{-\int_{s}^u b_{2}'(r,X^{x,n}_r,\omega)\diffns r}e^{-\int_{s}^u b_{1,n}'(r,X^{x,n}_r) \diffns r}\diffns u - \int_t^sD^i_{t}b_2(u,\omega)e^{-\int_{s}^u b_{2}'(r,X^{x,n}_r,\omega)\diffns r}e^{-\int_s^u b_{1,n}'(r,X^{x,n}_r) \diffns r}\diffns u  \notag
\end{align}
\begin{align}
\label{eqmalderpro111r}
 			=& \sigma_i e^{\int_{t}^s\{b_{1,n}'(u,X^{x,n}_u) +b_{2}'(u,X^{x,n}_u,\omega) \} \diffns u}\left(e^{\int_{t'}^t\{b_{1,n}'(u,X^{x,n}_u) +b_{2}'(u,X^{x,n}_u,\omega) \} \diffns u}-1\right)\notag\\
 			&+\int_{t'}^tD^i_{t'}b_2(u,X_u^{n,x},\omega)e^{-\int_{s}^u b_{2}'(r,X^{x,n}_r,\omega)\diffns r}e^{-\int_{s}^u b_{1,n}'(r,X^{x,n}_r) \diffns r}\diffns u\notag\\
 			& +\int_{t}^s\left(D^i_{t'}b_2(u,X_u^{n,x},\omega)-D^i_{t}b_2(u,X_u^{n,x},\omega)\right)e^{-\int_{s}^u b_{2}'(r,X^{x,n}_r,\omega)\diffns r}e^{-\int_{s}^u b_{1,n}'(r,X^{x,n}_r) \diffns r}\diffns u  \notag\\
 			&=  I_1+I_2+I_3.
	\end{align}
	Set 
	\begin{equation}
	\label{eqgirs1 n}
		u_{i,n}=\frac{\sigma_i}{\sigma_1^2+\cdots + \sigma_d^2 }b_n.
	\end{equation}
	Then using H\"older inequality repeatedly, we have
	\begin{align}\label{mallderI1r}
		E[I_1^2]=&\sigma_i^2 E\Big[ e^{2\int_{t}^s b_{2}'(u,X^{x,n}_u,\omega) \diffns u} e^{2\int_{t}^s b_{1,n}'(u,X^{x,n}_u) \diffns u}\Big(e^{\int_{t'}^t b_{1,n}'(u,X^{x,n}_u) \diffns u}e^{\int_{t'}^t b_{2}'(u,X^{x,n}_u,\omega)\diffns u}-1\Big)^2\Big]\notag\\
		\le& \sigma_i^2 E\Big[ e^{6\int_{t}^s b_{2}'(u,X^{x,n}_u,\omega) \diffns u}\Big]^{\frac{1}{3}} E\Big[ e^{6\int_{t}^s b_{1,n}'(u,X^{x,n}_u) \diffns u}\Big]^{\frac{1}{3}}E\Big[ \Big(e^{\int_{t'}^t b_{1,n}'(u,X^{x,n}_u) \diffns u}e^{\int_{t'}^t b_{2}'(u,X^{x,n}_u,\omega) \diffns u}-1\Big)^6\Big]^{\frac{1}{3}}\notag\\
	=&	\sigma_i^2 E\Big[ e^{6\int_{t}^s b_{2}'(u,X^{x,n}_u,\omega) \diffns u}\Big]^{\frac{1}{3}} E\Big[ \mathcal{E}\Big(\int_0^Tu_n(r,x+\sigma\cdot B_r,\omega)\diffns B_r\Big)e^{6\int_{t}^s b_{1,n}'(u,x+\sigma\cdot B_u) \diffns u}\Big]^{\frac{1}{3}}\notag\\
	& \times E\Big[ \Big(e^{\int_{t'}^t b_{1,n}'(u,X^{x,n}_u) \diffns u}e^{\int_{t'}^t b_{2}'(u,X^{x,n}_u,\omega) \diffns u}-1\Big)^6\Big]^{\frac{1}{3}}\notag\\
		\leq & C  \sigma_i^2 E\Big[ e^{2\sum_{i=1}^d\int_0^Tu_{i,n}(r,x+\sigma\cdot B_r,\omega)\diffns B_r^i-2\sum_{i=1}^d\int_0^Tu_{i,n}^2(r,x+\sigma\cdot B_r,\omega)\diffns r}\Big]^{\frac{1}{6}}E\Big[e^{4\sum_{i=1}^d\int_0^Tu_{i,n}^2(r,x+\sigma\cdot B_r,\omega)\diffns r}\Big]^{\frac{1}{12}}\notag\\
		&\quad \times E\Big[e^{24\int_{t}^s b_{1,n}'(u,x+\sigma\cdot B_u) \diffns u}\Big]^{\frac{1}{12}} E\Big[ \Big(e^{\int_{t'}^t b_{1,n}'(u,X^{x,n}_u) \diffns u}e^{\int_{t'}^t b_{2}'(u,X^{x,n}_u,\omega) \diffns u}-1\Big)^6\Big]^{\frac{1}{3}}\notag\\
	=&	I_{1,1}^{\frac{1}{6}} +I_{1,2}^{\frac{1}{12}}+I_{1,3}^{\frac{1}{12}}+I_{1,4}^{\frac{1}{3}}.
	\end{align}
	It follows from the Girsanov theorem applied to the martingale $2\sum_{i=1}^d\int_0^\cdot u_{i,n}(r,x+\sigma\cdot B_r,\omega)\diffns B_r$ that the first term $I_{1,1}$ in \eqref{mallderI1r} is equal to one. 
	Next, we wish to use conditions on $b_n$ and thus $u_n$ to show that the second term is finite for $T$ small enough. 
 	Using H\"older inequality, we have
 	\begin{align}
 		E\left[e^{4\sum_{i=1}^d\int_0^Tu_{i,n}^2(r,x+\sigma\cdot B_r,\omega)\diffns r}\right]&\leq \prod_{i=1}^{d} E\left[e^{12d\int_0^Tu_{i,n}^2(r,x+\sigma\cdot B_r,\omega)\diffns r}\right]^{\frac{1}{d}}\notag
 		\leq  \prod_{i=1}^{d} E\left[e^{12d\int_0^T\frac{\sigma_i^2}{(\sigma_1^2+\ldots +\sigma_d^2)^2 }b_n^2(r,x+\sigma\cdot B_r,\omega)\diffns r}\right]^{\frac{1}{d}}\notag.
 	\end{align}
 	Let us focus on each component of the above product. Using the condition on $b_n$, H\"older inequality successively  and the independence of the Brownian motion, we get 
 	\begin{align}
 		E\left[e^{12d\int_0^T\frac{\sigma_i^2}{(\sigma_1^2+\cdots +\sigma_d^2)^2 }b_n^2(r,x+\sigma\cdot B_r,\omega)\diffns r}\right]	&\leq E\left[e^{24c_{d,\sigma}\int_0^T\left(k^2|1+x+\sigma\cdot B_r|^2 + |M_2(\omega)|^2\right)\diffns r}\right]\notag\\
 		&\leq E\left[e^{48c_{d,\sigma}\int_0^Tk^2(1+|x+\sigma\cdot B_r|)^2\diffns r} \right]^{\frac{1}{2}}\times E\left[e^{48c_{d,\sigma}T  |M_2(\omega)|^2}\right]^{\frac{1}{2}}\notag\\
 		&\leq  C e^{48c_{d,\sigma}k^2T(1+|x|)^2} E\left[e^{48c_{d,\sigma}k^2\int_0^T|\sigma\cdot B_r|^2\diffns r} \right]^{\frac{1}{2}}\notag\\
 		&\leq  C e^{48c_{d,\sigma}k^2T(1+|x|)^2} E\left[e^{24c_{d,\sigma}k^2T\sup_{0\leq t\leq T}|\sigma\cdot B_t|^2} \right]^{\frac{1}{2}}\notag
 \end{align}
 \begin{align}
 		&\leq  C e^{48c_{d,\sigma}k^2T(1+|x|)^2} E\left[e^{48c_{d,\sigma}k^2T\sum_{i=1}^d \sup_{0\leq t\leq T}|\sigma_i\cdot B^i_t|^2}\right]^{\frac{1}{2}}\notag\\
 		&\leq C_{T,d,\sigma,M_2} e^{48c_{d,\sigma}k^2T(1+|x|)^2} \prod_{i=1}^{d} E\left[e^{48c_{d,\sigma}k^2T\sup_{0\leq t\leq T}|\sigma_i\cdot B^i_t|^2}\right]^{\frac{1}{2}}.
 	\end{align}
 	In the above, $c_{d,\sigma}:=\frac{d\sigma_i^2}{(\sigma_1^2+\cdots +\sigma_d^2)^2} $. Now, using exponential expansion and the Doob maximal inequality, we have 
 	\begin{align}
 		E\left[e^{48c_{d,\sigma}k^2T\sup_{0\leq t\leq T}|\sigma_i\cdot B^i_t|^2} \right]&= 1 + \sum_{p=1}^\infty \frac{(48c_{d,\sigma}\sigma_i^2k^2T)^p}{p!} E\left[\sup_{0\leq t\leq T}|B_t|^{2p}\right]\notag\\
 		&\leq  1 + \sum_{p=1}^\infty \frac{(48k^2dT)^p}{p!} \left(\frac{2p}{2p-1}\right)^2\frac{(2p)!}{2^p\cdot p!}T^p.
 	\end{align}
 	The inequality comes from the fact that $\frac{d\sigma_i^2\sigma^2_i}{(\sigma_1^2+\cdots +\sigma_d^2)^2} \leq d$. 
 	Next, applying the ratio test to the series $\sum_{p} a_p$ with $a_p:=\frac{(48k_1^2dT^2)^p}{p!} \Big(\frac{2p}{2p-1}\Big)^2\frac{(2p)!}{2^p\cdot p!}$ for $p\geq 1$, one can easily show that the series converge for example for 
 	\begin{equation}
 	\label{eq:cond small time 1}
 		T\le T_1:= \frac{1}{4\sqrt{3}dk^2_1}.
 	\end{equation} 
 	Hence the second term in \eqref{mallderI1r} is finite for small T.
 		
 	Now, we turn to $I_{1,4}$ in \eqref{mallderI1r}.
 	Using power and exponential expansion, we get by linearity of the expectation and H\"older inequality
 		\begin{align}\label{eqmalcalI11r}
 &	E\Big[ \Big(e^{\int_{t'}^t b_{1,n}'(u,X^{x,n}_u) \diffns u}e^{\int_{t'}^t b_{2}'(u,X^{x,n}_u,\omega) \diffns u}-1\Big)^6\Big]\notag\\
 	= & E\Big[ e^{6\int_{t'}^t \{b_{1,n}'(u,X^{x,n}_u) + b_{2}'(u,X^{x,n}_u,\omega) \}\diffns u}-6e^{5\int_{t'}^t \{b_{1,n}'(u,X^{x,n}_u) + b_{2}'(u,X^{x,n}_u,\omega) \}\diffns u}+15e^{4\int_{t'}^t \{b_{1,n}'(u,X^{x,n}_u) + b_{2}'(u,X^{x,n}_u,\omega)\}\diffns u}\notag\\
 	&-20 e^{3\int_{t'}^t \{b_{1,n}'(u,X^{x,n}_u) + b_{2}'(u,X^{x,n}_u,\omega) \}\diffns u}+15e^{2\int_{t'}^t \{b_{1,n}'(u,X^{x,n}_u)+ b_{2}'(u,X^{x,n}_u,\omega) \}\diffns u}-6e^{\int_{t'}^t \{b_{1,n}'(u,X^{x,n}_u) + b_{2}'(u,X^{x,n}_u,\omega) \}\diffns u}+1\Big]\notag\\
 	=& E\Big[\sum_{q=1}^\infty \frac{\Big(6\int_{t'}^t \{b_{1,n}'(u,X^{x,n}_u) + b_{2}'(u,X^{x,n}_u,\omega) \}\diffns u\Big)^q}{q!}\Big]-6E\Big[\sum_{q=1}^\infty \frac{\Big(5\int_{t'}^t \{b_{1,n}'(u,X^{x,n}_u) + b_{2}'(u,X^{x,n}_u,\omega) \}\diffns u\Big)^q}{q!}\Big]\notag\\
 	&+15 E\Big[\sum_{q=1}^\infty \frac{\Big(4\int_{t'}^t \{b_{1,n}'(u,X^{x,n}_u) + b_{2}'(u,X^{x,n}_u,\omega) \}\diffns u\Big)^q}{q!}\Big]-20E\Big[\sum_{q=1}^\infty \frac{\Big(3\int_{t'}^t \{b_{1,n}'(u,X^{x,n}_u) + b_{2}'(u,X^{x,n}_u,\omega) \}\diffns u\Big)^q}{q!}\Big]\notag\\
 	&+15E\Big[\sum_{q=1}^\infty \frac{\Big(2\int_{t'}^t \{b_{1,n}'(u,X^{x,n}_u) + b_{2}'(u,X^{x,n}_u,\omega) \}\diffns u\Big)^q}{q!}\Big]-6E\Big[\sum_{q=1}^\infty \frac{\Big(\int_{t'}^t \{b_{1,n}'(u,X^{x,n}_u) + b_{2}'(u,X^{x,n}_u,\omega) \}\diffns u\Big)^q}{q!}\Big]\notag\\
 	\leq &  E\Big[\sum_{q=1}^\infty \frac{\Big|6\int_{t'}^t \{b_{1,n}'(u,X^{x,n}_u) + b_{2}'(u,X^{x,n}_u,\omega) \}\diffns u\Big|^q}{q!}\Big]+6E\Big[\sum_{q=1}^\infty \frac{\Big|5\int_{t'}^t \{b_{1,n}'(u,X^{x,n}_u) + b_{2}'(u,X^{x,n}_u,\omega) \}\diffns u\Big|^q}{q!}\Big]\notag\\
 	&+15 E\Big[\sum_{q=1}^\infty \frac{\Big|4\int_{t'}^t \{b_{1,n}'(u,X^{x,n}_u) + b_{2}'(u,X^{x,n}_u,\omega) \}\diffns u\Big|^q}{q!}\Big]+20E\Big[\sum_{q=1}^\infty \frac{\Big|3\int_{t'}^t \{b_{1,n}'(u,X^{x,n}_u) + b_{2}'(u,X^{x,n}_u,\omega) \}\diffns u\Big|^q}{q!}\Big]\notag\\
 	&+15E\Big[\sum_{q=1}^\infty \frac{\Big|2\int_{t'}^t \{b_{1,n}'(u,X^{x,n}_u) + b_{2}'(u,X^{x,n}_u,\omega) \}\diffns u\Big|^q}{q!}\Big]+6E\Big[\sum_{q=1}^\infty \frac{\Big|\int_{t'}^t \{b_{1,n}'(u,X^{x,n}_u) + b_{2}'(u,X^{x,n}_u,\omega) \}\diffns u\Big|^q}{q!}\Big]\notag\\
\leq  & J_1+J_2+J_3+J_4+J_5+J_6.
 	\end{align}
 	
 	Since $\sigma\cdot B_t \sim N(0, t\sum_{i=1}^d\sigma^2_i)$ and has independent increments, it follows from Proposition \ref{propmainEstimate} that each term in \eqref{eqmalcalI11r} is bounded by ${\cal C}(T,\|\tilde{b}_1\|_{\infty}, |x|)|t-t'|$, where ${\cal C}(\|\tilde{b}_1\|_{\infty}, |x|)$ is a continuous function depending on $\|\tilde{b}\|_{\infty}, x, \|\sigma\|^2$ and $T$.
 	More specifically, let us focus on $J_1$ only since the bounds for the other terms follow in a similar way.  Using dominated convergence theorem, H\"older inequality and Girsanov theorem, we have
 	\begin{align}\label{eqmalcalI12r}
 	J_1=&\sum_{q=1}^\infty \frac{E\Big[\Big|6\int_{t'}^t \{b_{1,n}'(u,X^{x,n}_u) + b_{2}'(u,X^{x,n}_u,\omega) \}\diffns u\Big|^q\Big]}{q!}\notag\\
 	\leq& \sum_{q=1}^\infty \frac{12^pE\Big[\Big|\int_{t'}^t b_{1,n}'(u,X^{x,n}_u)\diffns u\Big|^q\Big]}{q!} + \sum_{q=1}^\infty \frac{12^pE\Big[\Big|\int_{t'}^tb_{2}'(u,X^{x,n}_u,\omega) \diffns u\Big|^q\Big]}{q!}\notag\\
 	\leq& \sum_{q=1}^\infty \frac{12^pE\Big[\mathcal{E}\Big(\int_0^Tu_n(r,x+\sigma\cdot B_r,\omega)\diffns B_r\Big)\Big|\int_{t'}^t b_{1,n}'(u,x+\sigma\cdot B_u)\diffns u\Big|^q\Big]}{q!} + \sum_{q=1}^\infty \frac{12^p|t'-t|^qE\Big[C_T^p|M_{2}(\omega) |^q\Big]}{q!}\notag\\
 	\leq& E\Big[ e^{4\sum_{i=1}^d\int_0^Tu_{i,n}(r,\omega,x+\sigma\cdot B_r)\diffns B_r^i-8\sum_{i=1}^d\int_0^Tu_{i,n}^2(r,\omega,x+\sigma\cdot B_r)\diffns r}\Big]^{\frac{1}{4}}E\Big[e^{6\sum_{i=1}^d\int_0^Tu_{i,n}^2(r,\omega,x+\sigma\cdot B_r)\diffns r}\Big]^{\frac{1}{4}}\notag\\
 &	\times \sum_{q=1}^\infty \frac{12^pE\Big[\Big|\int_{t'}^t b_{1,n}'(u,x+\sigma\cdot B_u)\diffns u\Big|^{2q}\Big]^{\frac{1}{2}}}{q!} + |t'-t|^{\frac{1}{2}}\sum_{q=1}^\infty \frac{12^p|t'-t|^{q-\frac{1}{2}}E\Big[C_T^p|M_{2}(\omega) |^q\Big]}{q!}\notag\\
 \leq &  C \sum_{q=1}^\infty \frac{12^pE\Big[\Big|\int_{t'}^t b_{1,n}'(u,x+\sigma\cdot B_u)\diffns u\Big|^{2q}\Big]^{\frac{1}{2}}}{q!} +\frac{C}{\sqrt{T}}E\Big[\exp\{12C_TM_{2}(\omega) \}\Big]|t'-t|^{\frac{1}{2}},
 	\end{align}
 	where the first bound comes from Girsanov theorem applied to the martingale $2\sum_{i=1}^d\int_0^\cdot u_{i,n}(r,\omega,x+\sigma\cdot B_r)\diffns B_r$ and similar computations as in the case of $I_{1,2}$. Now by Proposition \ref{propmainEstimate},
 		\begin{align}\label{eqapenexpobound1} 
 		\sum_{q=1}^\infty \frac{E\Big[\Big(\int_{t'}^t \sqrt{12}b_{1,n}'(u,x+\sigma\cdot B_u) \diffns u\Big)^{2q}\Big]^{\frac{1}{2}}}{q!}
 		 \leq & \Big(\sum_{q=1}^{\infty}\frac{C_{\sigma,k}^q(1+|x|^{q})\sqrt{q!} (t-t')^{q/2}
 		 }{q!} \Big)\notag\\
 		 \leq &\Big(\sum_{q=1}^{\infty}\frac{C_{\sigma,k}^q(1+|x|^{q}) (t-t_0)^{\frac{q-1}{2}}
 		 }{\sqrt{q!}} \Big)|t-t'|^{1/2}
 		\end{align}	
 		for some positive constant $C_\sigma$. 

 	Multiplying the numerator and the denominator of each term in the series by $2^q$ and using Cauchy-Schwartz inequality yields:
 	\begin{align}\label{eqapenexpobound3}
 		\sum_{q=1}^\infty \frac{E\Big[\Big|\int_{t'}^t \sqrt{12}b_{1,n}'(u,x+\sigma\cdot B_u) \diffns u\Big|^{2q}\Big]}{q!}\leq & \Big(\sum_{q=1}^{\infty}\frac{2^qC_{\sigma,k}^q(1+|x|^{q}) (t-t_0)^{\frac{q-1}{2}}
 			}{\sqrt{q!}2^q} \Big)|t-t'|^{1/2}\notag\\
 			 \leq& \Big(\sum_{q=1}^{\infty}\frac{2^qC_{\sigma,k}^{2q}(1+|x|)^{2q} (t-t_0)^{(q-1)}
 			}{q!} \Big)^{1/2} \Big(\sum_{q=1}^{\infty}\frac{ 1}
 		{4^q} \Big)^{1/2}|t-t'|^{1/2}\notag\\
 		 \leq & C \Big(\sum_{q=1}^{\infty}\frac{2^qC_{\sigma,k}^{2q}(1+|x|)^{2q} T^{(q-1)}
 		}{q!} \Big)^{1/2} |t-t'|^{1/2}
 	\notag\\
 	\leq & \frac{C}{\sqrt{T}} \exp\{C_{\sigma,k} T(1+|x|)^2\}|t-t'|^{1/2}	.
 			\end{align}

 	Similarly, it can be proved that $E\Big[e^{16 \int_t^s b'_{1,n}(u, x + \sigma \cdot B_u)\,du} \Big]$ is bounded by $\frac{C}{\sqrt{T}}\exp\left\{C_{\sigma,k} T(1 + |x|)^2 \right\}|t-s|^{1/2}$.
 	Therefore there exists a constant $C$ depending on $\sigma$ such that
 	\begin{equation*}
 		E[I^2_1] \le \frac{C}{\sqrt{T}}\exp\left\{C_{\sigma,k} T(1 + |x|)^2 \right\}|t-t'|^{1/2}.
 	\end{equation*}

 	Repeated application of the H\"older inequality yields
 	\begin{align}\label{mallderI2r}
 		E[I_2^2]&= E\Big[ \Big(\int_{t'}^tD^i_{t'}b_2(u,X_u^{n,x},\omega)e^{-\int_{s}^u b_{2}'(r,X^{x,n}_r,\omega)\diffns r}e^{-\int_{s}^u b_{1,n}'(r,X^{x,n}_r) \diffns r}\diffns u\Big)^2\Big]\notag\\
 		&\leq   E\Big[ \int_{t'}^t\Big(D^i_{t'}b_2(u,X_u^{n,x},\omega)\Big)^2\diffns u \Big(\int_{t'}^te^{-4\int_{s}^u b_{2}'(r,X^{x,n}_r,\omega)\diffns r}e^{-\int_{s}^u4 b_{1,n}'(r,X^{x,n}_r) \diffns r}\diffns u\Big)^{1/2}\Big](t - t')^{1/2}\notag\\
 			&\leq   E\Big[ \Big(\int_{t'}^t\Big(D^i_{t'}b_2(u,X_u^{n,x},\omega)\Big)^2\diffns u\Big)^2\Big]^{\frac{1}{2}}  E\Big[\int_{t'}^te^{-4\int_{s}^u b_{2}'(r,X^{x,n}_r,\omega)\diffns r}e^{-\int_{s}^u4 b_{1,n}'(r,X^{x,n}_r) \diffns r}\diffns u\Big]^{\frac{1}{2}}(t - t')^{1/2} \notag\\
 			&\leq  C |t-t'|^{1/2}E\Big[ \Big(\int_{t'}^t\Big(\tilde{M}_2(u,t',\omega)\Big)^2\diffns u\Big)^2\Big]^{\frac{1}{2}}  E\Big[\int_{0}^Te^{-4\int_{s}^u b_{2}'(r,X^{x,n}_r,\omega)\diffns r}e^{-\int_{s}^u4 b_{1,n}'(r,X^{x,n}_r) \diffns r}\diffns u\Big]^{\frac{1}{2}}\notag\\
 			&\leq  C _{T,b_2^{\mathrm{power}}}|t-t'|^{1/2}  E\Big[\int_{0}^Te^{-4\int_{s}^u b_{2}'(r,X^{x,n}_r,\omega)\diffns r}e^{-\int_{s}^u4 b_{1,n}'(r,X^{x,n}_r) \diffns r}\diffns u\Big]^{\frac{1}{2}}.
 	\end{align}
	Again, using Girsanov transform and H\"older inequality, 
	similar reasoning as before gives 
	$$E[I_2^2]\leq C _{T,b_2^{\mathrm{power}}}\frac{C}{\sqrt{T}} \exp\{C_{\sigma,k} T(1+|x|)^2\}|t-t'|^{\frac{1}{2}}$$ 
	for $0 \leq t' \leq t \leq T$, with $T$ small enough. 
 			 	
 	As for $I_3$, once more repeated use of Cauchy-Schwartz inequality and assumption \ref{a1} give the existence of a constant $C$ that may change from one line to the other such that 
 	\begin{align}\label{eq: Db2 bounded} %
 		E[I_3^2] &=  E\Big[\Big(\int_{t}^s\Big(D^i_{t'}b_2(u,X^{x,n}_u,\omega)-D^i_{t}b_2(u,X^{x,n}_u,\omega)\Big)e^{-\int_{s}^u b_{1,n}'(r,X^{x,n}_r) \diffns r}e^{-\int_{s}^u b_2'(r,X^{x,n}_r,\omega) \diffns r}\diffns u\Big)^2\Big]\notag\\
 		&\le CE\Big[\int_{t}^s|D^i_{t'}b_2(u,X^{x,n}_u,\omega)-D^i_{t}b_2(u,X^{x,n}_u,\omega)|^4\diffns u \Big]^{1/2}E\Big[\int_t^se^{-4\int_{s}^u b_{1,n}'(r,X^{x,n}_r) \diffns r}e^{-4\int_{s}^u b_2'(r,X^{x,n}_r,\omega) \diffns r}\diffns u \Big]^{1/2}\notag\\
 		&\le CE\Big[\int_{t}^se^{-8\int_{t'}^u b_{1,n}'(r,X^{x,n}_r) \diffns r}\diffns u\Big]^{\frac{1}{4}}E\Big[\int_{t}^se^{-8\int_{t'}^u b_2'(r,X^{x,n}_r,\omega) \diffns r}\diffns u\Big]^{1/4}|t-t'|^{\alpha}.
 	\end{align}
 	Once again, using Girsanov theorem and the linear growth condition on the drift $b_1$, one can show that the expectations $E\Big[\int_{t}^se^{-\int_{s}^u8 b_{1,n}'(r,X^{x,n}_r) \diffns r}\diffns u\Big]$ is bounded by $\frac{C}{\sqrt{T}} \exp\{C_{\sigma,k} T(1+|x|)^2\}$. 
 	Moreover, the assumptions on $b_2$ insure that the two last integral terms on the right hand side of \eqref{eq: Db2 bounded} are bounded by $C$. Therefore, there exists $\alpha >0$ such that 
 	\begin{equation}
 	\label{mallderI3r}
 		E[I_3^2]\leq C \frac{C}{\sqrt{T}} \exp\{C_{\sigma,k} T(1+|x|)^2\}|t-t'|^{\alpha}
 	\end{equation} 
 	for $0 \leq t' \leq t \leq T_1$ with $T_1$ small enough.

 	Combining \eqref{mallderI1r}-\eqref{mallderI3r}, there exists a function ${\cal C}= {\cal C}(k, |x|^2,b_2^{\mathrm{power}})>0$ depending on $k, b_2$ and $ |x|^2$ such that 
 	$$
 		E \left[ | D_t X^{x,n}_s - D_{t'} X^{x,n}_s |^2 \right] \leq {\cal C}(k, |x|^2,b_2^{\mathrm{power}})|t -t'|^{\alpha'}
 	$$
 	for $0 \leq t' \leq t \leq T$ with $T$ small enough ($T\le 1\wedge T_1$) and $\alpha' =\min(\alpha,1/2)$. Thus the first part of the Lemma is shown.
 	Taking $t'>s$ above, $D_{t'}X_s^{x,n}=0$, which implies 
 	$$
 		\sup_{0 \leq t \leq T} E \left[ | D_t X^{x,n}_s |^2 \right] \leq {\cal C}(k, |x|^2,b_2^{\mathrm{power}}).
 	$$
 	This proves the lemma.
\end{proof}

\paragraph{2.2.3. Weak convergence to the weak solution.}
In this step, we show that for each $0 \leq t \leq T$ the above sequence $(X^{x,n}_{t} )_{n \geq 1}$ converges weakly to $E\Big[X^x_t|\mathcal{F}_t\Big]$ in the space $L^2(\Omega,P;\mathcal{F}_t)$. 
 		
\begin{lemma}
\label{lem:weak conv weak sol}
	Assume $b^{\text{exp}}_2<\infty$ and $\Omega$ is the canonical space.
	Choose the sequence $b_{1,n}: [0,T] \times \mathbb{R} \rightarrow \mathbb{R}$, $n\ge 1 $ as before, and let $(X^{x,n}_{t})_{n \geq 1}$ be the corresponding strong solutions to the SDE \eqref{eqmainr}. Then for each $0 \leq t \leq T$, and each function $h:\mathbb{R}\to \mathbb{R}$ of polynomial growth, the sequence $(h(X^{x,n}_{t}))_{n\geq 1}$ is uniformly bounded in $L^2(\Omega,P;\mathcal{F}_t)$ and converges weakly to $E\Big[h(X^x_t)|\mathcal{F}_t\Big]$ in this space.
\end{lemma}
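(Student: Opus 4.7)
My plan has three parts: a uniform $L^2$-bound for $(h(X^{x,n}_t))_{n\ge 1}$, extraction of a weakly convergent subsequence by reflexivity of the Hilbert space $L^2(\Omega,\mathcal{F}_t,P)$, and identification of every such weak limit with $E[h(X^x_t)\mid\mathcal{F}_t]$. Uniqueness of the limit then yields weak convergence of the full sequence in $L^2(\Omega,\mathcal{F}_t,P)$.

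For the uniform bound, I would use Girsanov's transformation on the strong solutions $X^{x,n}$. With $u_n := \sigma b_n/(\sigma_1^2+\cdots+\sigma_d^2)$ as in Lemma \ref{bengen} and $\tilde P_n$ defined by the density $\mathcal{E}(-\int_0^\cdot u_n(s,X^{x,n}_s,\omega)\,dB_s)_T$ against $P$, one has $X^{x,n}_t = x + \sigma\cdot\hat B^n_t$ under $\tilde P_n$ for some $\tilde P_n$-Brownian motion $\hat B^n$. A combination of H\"older's inequality with the linear growth of $b_{1,n}$ (uniform in $n$), the exponential moment condition $b_2^{\mathrm{exp}}<\infty$ from \eqref{eq:expo moment b2}, the bounds from the proof of Lemma \ref{bengen}, and Fernique-type estimates on $\sup_{s\le T}|B_s|^2$ then yields $\sup_n E_P[h(X^{x,n}_t)^2]<\infty$ for any $h$ of polynomial growth.

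For the identification, it suffices to test against a dense family in $L^2(\Omega,\mathcal{F}_t,P)$, e.g.\ bounded continuous $\xi = g(B_{t_1},\ldots,B_{t_m})$ with $0\le t_1<\cdots<t_m\le t$. I would exploit the Girsanov construction from Lemma \ref{lem:weak existence}: on the canonical space set $Y_t := x+\sigma\cdot B_t$, let $Z^n_T := \mathcal{E}(\int u_n(r,Y_r,\omega)\,dB_r)_T$ and observe that under $Q_n$ with $dQ_n/dP = Z^n_T$, the process $\tilde B^n_s := B_s - \int_0^s u_n(r,Y_r,\omega)\,dr$ is a Brownian motion and $(Y,\tilde B^n)$ satisfies the approximating SDE \eqref{eqmainr} with drift $b_n$. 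Weak uniqueness for the smooth SDE (a consequence of strong existence for Lipschitz drifts) then gives
\[
E_P[\xi\, h(X^{x,n}_t)] \;=\; E_P\bigl[Z^n_T\,\xi(\tilde B^n_{t_1},\ldots,\tilde B^n_{t_m})\,h(Y_t)\bigr].
\]
Since $b_{1,n}\to b_1$ a.e., $u_n\to u$ a.e., hence $\tilde B^n \to \tilde B^\infty := B - \int_0^\cdot u(r,Y_r,\omega)\,dr$ pointwise and $Z^n_T \to Z^\infty_T := \mathcal{E}(\int u(r,Y_r,\omega)\,dB_r)_T$ in probability. A generalized dominated convergence argument, justified by the uniform integrability of $(Z^n_T)$ afforded by \eqref{eq:expo moment b2} and the Bene\v s-type bounds inside the proof of Lemma \ref{bengen}, then yields
\[
E_P\bigl[Z^\infty_T\,\xi(\tilde B^\infty_{t_1},\ldots,\tilde B^\infty_{t_m})\,h(Y_t)\bigr] \;=\; E_{Q^\infty}[\xi\,h(X^x_t)] \;=\; E_P\bigl[\xi\, E[h(X^x_t)\mid\mathcal{F}_t]\bigr],
\]
by the weak-solution characterization of $X^x$ under $Q^\infty$ from Lemma \ref{lem:weak existence}.

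The main obstacle is the passage to the limit in the Girsanov identity: the anticipating dependence of $u_n$ on $\omega$ via $b_2$ makes the integrability of the densities delicate, and the exponential moment assumption \eqref{eq:expo moment b2} is tailor-made to make Bene\v s-type bounds on the Dol\'ean-Dade exponentials available uniformly in $n$, thereby ensuring the dominated convergence argument goes through.
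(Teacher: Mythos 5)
Your uniform $L^2$-bound is the same Girsanov/H\"older/Gaussian-kernel argument as the paper's (compare \eqref{eqweaklim1}), and your overall strategy (uniform bound, extraction of a weak limit, identification against a dense family) is also the paper's. The gap is in the identification step, specifically in the identity
\[
E_P[\xi\,h(X^{x,n}_t)]\;=\;E_P\bigl[Z^n_T\,\xi(\tilde B^n_{t_1},\dots,\tilde B^n_{t_m})\,h(Y_t)\bigr],
\]
which you justify by ``weak uniqueness for the smooth SDE''. Weak uniqueness (via Yamada--Watanabe) equates the laws of the pairs (solution, driving noise) for two solutions of the \emph{same} equation, where the coefficients are fixed functionals of time, state and, adaptedly, the driving noise. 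Here the drift contains the random field $b_2(t,x,\omega)$, which is a functional of the canonical path. For the strong solution $X^{x,n}$ under $P$ the canonical path \emph{is} the driving noise; for the pair $(Y,\tilde B^n)$ under $Q_n$ the driving noise is $\tilde B^n$, while $b_2$ is still evaluated along the canonical path $B=\tilde B^n+\int u_n\,dr$. Expressed as functionals of their respective driving noises, the two equations therefore have \emph{different} drift coefficients, and equality of the laws of the pairs does not follow from any uniqueness statement for the Lipschitz SDE. Without this identity the subsequent limit passage has nothing to act on. (A secondary inconvenience, even if the identity were granted: your test functionals $\xi(\tilde B^n_{t_1},\dots,\tilde B^n_{t_m})$ change with $n$, which complicates the dominated-convergence step.)

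This is precisely the obstruction the paper's proof is designed to circumvent: it tests against the dense family $\{\mathcal{E}(\int_0^T\dot\varphi_u\,dB_u):\varphi\in C^1_b([0,T],\mathbb{R}^d)\}$ and first performs the \emph{deterministic} Cameron--Martin shift $\omega\mapsto\omega+\varphi$, under which the random field transforms explicitly into $\tilde b_2(t,x,\omega)=b_2(t,x,\omega+\varphi)$, so that the shifted processes $X^{x,n}(\cdot+\varphi)$ and $X^{x}(\cdot+\varphi)$ solve SDEs with the \emph{same} random field on the \emph{same} probability space. The difference of the two tested expectations is then reduced, in \eqref{eqweaklim2}, to the difference of two Dol\'eans-Dade exponentials integrated against the fixed random variable $h(x+\sigma\cdot B_t)$, where $b_{1,n}\to b_1$ can be exploited by dominated convergence. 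To repair your argument you would need to replace the appeal to weak uniqueness by this Cameron--Martin/Girsanov computation (or else prove a transfer-of-law statement valid for adapted random drifts, which is not available off the shelf and is exactly the difficulty the randomness of $b_2$ creates).
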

\begin{proof}
	Let us first show that $(h(X^{x,n}_{t}))_{n\geq 1}$ is bounded in 
	$L^2(\Omega,P;\mathcal{F}_t)$. In fact, using Girsanov transform, H\"older inequality and the fact that $(1+|z|^p)e^{-|z|^2 /2s}$ can be bounded by $C_pe^{-\frac{|z|^2}{ 2^{p+1}s}}$, where $C_p$ is a constant depending on $p$, we have
	\begin{align}\label{eqweaklim1}
		\sup_{n}	E\left[|h(X^{x,n}_{t})|^2\right] &\leq  E\left[ e^{2\sum_{i=1}^d\int_0^Tu_{i,n}(r,x+\sigma\cdot B_r,\omega)\diffns B_r^i-2\sum_{i=1}^d\int_0^Tu_{i,n}^2(r,x+\sigma\cdot B_r,\omega)\diffns r}\right]^{\frac{1}{2}}\notag\\
		&\quad \times E\left[e^{2\sum_{i=1}^d\int_0^Tu_{i,n}^2(r,x+\sigma \cdot B_r,\omega)\diffns r}\right]^{\frac{1}{4}}E\left[|h(x+\sigma \cdot B_{t})|^4\right]^{\frac{1}{4}}\notag\\
		&\leq  C E\left[|h(x+\sigma \cdot B_{t})|^4\right]^{\frac{1}{4}}\notag\\
		&= C\Big(\frac{1}{\sqrt{2\pi t\|\sigma \|^2}}\int_{\mathbb{R}}|h(x+z)|^4e^{-|z|^2 /2t\|\sigma \|^2}\diffns z\Big)^{\frac{1}{4}}\notag\\
		&\leq  \frac{C_{\|\sigma \|^2}}{(2\pi t\|\sigma \|^2)^{\frac{1}{8}}}\Big(|x|^{4}\int_{\mathbb{R}}e^{-\frac{|z|^2}{ 2t\|\sigma \|^2}}\diffns z +\int_{\mathbb{R}}e^{-\frac{|z|^2}{ 2^{5}t\|\sigma \|^2}}\diffns z \Big)^{\frac{1}{4}}<\infty.
 	\end{align}
 	The constant $C_{\|\sigma\|^2}$ above depends only on $ \|\sigma \|^2$ and $|x|$.	

 	To show that $(h(X^{x,n}_{t}))_{n\geq 1}$  converges weakly to $E\Big[h(X^x_t)|\mathcal{F}_t\Big]$ in $L^2(\Omega,P;\mathcal{F}_t)$, first notice that the space
 	$$
 		\left\{\mathcal{E}\Big(\int_0^T\dot\varphi_u\diffns B_u\Big): \varphi\in C^1_b([0,T],\mathbb{R}^d)\right\}
 	$$
 	is a dense subspace of $L^2(\Omega,P)$. Here $C^1_b([0,T],\mathbb{R}^d)$ is the space of bounded continuous differentiable functions on $[0,T]$ and with values in $\mathbb{R}^d$ and $\dot\varphi$ is the derivative of $\varphi$. Hence, it is enough to show that $\Big(h(X^{x,n}_{t})\mathcal{E}\Big(\int_0^T\dot\varphi_r\diffns B_r\Big)\Big)_{n}$ converges to $E\Big[h(X^x_t)|\mathcal{F}_t\Big]\mathcal{E}\Big(\int_0^T\dot\varphi_r\diffns B_r\Big)$ in expectation. %
 	Since $\Omega$ is a Wiener space, we know from the Cameron-Martin theorem, see e.g. \cite{UsZa1}, that for every $h$ measurable,
 	\begin{align}
 	\label{eq:CM}
 		E\left[h(X^x_t)\mathcal{E}\left(\int_0^T\dot\varphi_u\diffns B_u\right)\right]=\int_{\Omega} h(X^x_t(\omega+\varphi))\diffns P(\omega).
 	\end{align}
Let $\varphi \in C^1_b([0,T],\mathbb{R}^d)$.
For every $n$, the process $\tilde X^{x,n}$ given by $\tilde X^{x,n}_t(\omega):= X^{x,n}_t(\omega+ \varphi)$ solves the SDE
\begin{equation}
\label{eq:CM sde}
	d\tilde X^{x,n}_t = (b_{1,n}(t,\tilde X^{x,n}_t) + \tilde b_2(t, \tilde X^{x,n}_t,\omega) + \sigma\dot\varphi_t)\diffns t + \sigma\diffns B_t
\end{equation}
where $\tilde b_2(t,x,\omega):= b_2(t,x,\omega+\varphi)$.
To see this, let $H \in L^2(\Omega, P)$ and apply \eqref{eq:CM} and the fact that $X^{x,n}$ solves the SDE \eqref{eqmainr} to get 
\begin{align*}
	 E[\tilde X^{x,n}_tH] &= E\left[X^{x,n}_tH(\omega - \varphi)\mathcal{E}\left(\int_0^T\dot\varphi(u)\diffns B_u\right)\right]\\
	  &= E\Big[\Big(x + \int_0^tb_1(u, X^{x,n}_u) + b_2(u, X^{x,n}_u,\omega)\diffns u + \sigma B_t \Big)H(\omega-\varphi){\cal E}\Big(\int_0^T\dot\varphi\diffns B \Big) \Big] \\
	      & = E\Big[\Big(x + \int_0^t b_1(u, X^{x,n}_u(\omega+ \varphi))+ b_2(u,  X^{x,n}_u(\omega+\varphi),\omega+\varphi)\diffns u  + \sigma B_t(\omega+\varphi)\Big)H\Big]\\
	      &= E\Big[\Big(x + \int_0^t b_1(u, \tilde X^{x,n}_u(\omega))+ \tilde b_2(u,\tilde X^{x,n}_u(\omega),\omega)+ \sigma\dot\varphi\diffns u  + \sigma B_t(\omega)\Big)H\Big],
\end{align*} 
where the last equality follows by the fact that $B_t(\omega + \varphi) = B_t(\omega) + \varphi$, since $B$ is the canonical process.
This proves the claim.
Since $X^x$ satisfies the SDE (without been adapted to the filtration $({\cal F}_t)$), with respect to a probability measure $Q$ which is equivalent to $P$, see the proof of Lemma \ref{lem:weak existence}, the above arguments show that $\tilde X^x(\omega):= X^x(\omega+ \varphi)$ satisfies
\begin{equation}
	d\tilde X^{x}_t = (b_{1}(t,\tilde X^{x}_t) + \tilde b_2(t, \tilde X^{x,n}_u,\omega) + \sigma\dot\varphi_t)\diffns t + \sigma\diffns B_t \quad P\text{-a.s.}
\end{equation}
Now, put
\begin{equation}\label{eqtildeu1}
	\tilde u_{i,n}: =\frac{\sigma_i}{\sigma_1^2+\cdots + \sigma_d^2 }(b_{1,n} + \tilde b_2) =: b_{1,n}^{\sigma_i} + \tilde b_2^{\sigma_i} \quad \text{and} \quad \tilde u_{i}=\frac{\sigma_i}{\sigma_1^2+\cdots + \sigma_d^2 }(b_{1} + \tilde b_2) =; b_1^{\sigma_i} + b_2^{\sigma_i}.
\end{equation}
 	It follows by Girsanov theorem that
 	\begin{align}\label{eqweaklim2}
 		&	E\Big[h(X^{x,n}_{t})\mathcal{E}\Big(\int_0^T\dot\varphi_r\diffns B_r\Big)-E\Big[h(X^x_t)|\mathcal{F}_t\Big]\mathcal{E}\Big(\int_0^T\dot\varphi_r\diffns B_r\Big)\Big]\notag
 		=  E\Big[\Big(h(X^{x,n}_{t})-h(X^x_t)\Big)\mathcal{E}\Big(\int_0^T\dot\varphi_r\diffns B_r\Big)\Big]\notag\\
 		&=  E\Big[h(x+\sigma\cdot B_{t})\Big(\mathcal{E}\Big(\int_0^T\Big\{\tilde u_n(r,x+\sigma \cdot B_r,\omega)+\dot\varphi_r\Big\}\diffns B_r\Big)-\mathcal{E}\Big(\int_0^T\Big\{\tilde u(r,x+\sigma \cdot B_r,\omega)+\dot\varphi_r\Big\}\diffns B_r\Big)\Big)\Big].
 	\end{align}
 	Using the fact that $|e^a -e^b|\leq |e^a + e^b||a - b|$, the H\"older inequality and Burkholder-Davis-Gundy inequality, we get
 	\begin{align}\label{eqweaklim3}
 		&	E\Big[h(X^{x,n}_{t})\mathcal{E}\Big(\int_0^T\dot\varphi_r\diffns B_r\Big)-E\Big[h(X^x_t)|\mathcal{F}_t\Big]\mathcal{E}\Big(\int_0^T\dot\varphi_r\diffns B_r\Big)\Big]\notag\\
 		  &\leq C E\Big[h(x+\sigma\cdot B_{t})^2\Big]^{\frac{1}{2}}E\Big[\Big(\mathcal{E}\Big(\int_0^T\Big\{\tilde u_n(r,x+\sigma \cdot B_r,\omega)+\dot\varphi_r\Big\}\diffns B_r\Big)+\mathcal{E}\Big(\int_0^T\Big\{\tilde u(r,x+\sigma \cdot B_r,\omega)+\dot\varphi_r\Big\}\diffns B_r\Big)\Big)^4\Big]^{\frac{1}{4}}\notag\\
 		&\quad \times\Big\{E\Big[ \Big(\int_0^T\Big(\tilde u_n(r,x+\sigma \cdot B_r,\omega)- \tilde u(r,x+\sigma \cdot B_r,\omega)\Big)\diffns B_r\Big)^4\Big]\notag\\
 		&\quad +E\Big[\Big(\int_0^T\|\tilde u_n(r,x+\sigma \cdot B_r,\omega)+\dot\varphi(t)\|^2	-\|\tilde u(r,x+\sigma \cdot B_r,\omega)+\dot\varphi_r\|^2\diffns r\Big)^4\Big] \Big\}^{\frac{1}{4}}\notag\\
 		&=I_1\times I_{2,n}\times (I_{3,n}+I_{4,n}).
 	\end{align}
 	That $I_1$ is finite was proved in the computations leading to  \eqref{eqweaklim1}.
 	Observe that 
 	\begin{align*}
 		&\mathcal{E}\Big(\int_0^T\left\{\tilde u_n(r,x+\sigma \cdot B_r,\omega)+\dot\varphi_r\right\}\diffns B_r\Big) = {\cal E}\Big(\int_0^T b_{1,n}^\sigma(r, x +\sigma\cdot B_r,\omega)\diffns B \Big){\cal E}\Big(\int_0^T \tilde b_2^\sigma\diffns B \Big){\cal E}\Big(\int_0^T\dot\varphi\diffns B \Big)\\
 		&\quad\times\exp\Big(-\int_0^T\dot\varphi_rb_{1,n}^\sigma(r, x +\sigma\cdot B_r,\omega) - \dot\varphi_r\tilde b_2^\sigma - \tilde b^\sigma_2b_{1,n}^\sigma(r, x +\sigma\cdot B_r,\omega)\diffns r \Big).
 	\end{align*}
 	 Thus, $ I_{2,n}$ is bounded by similar argument as in the proof of  Lemma \ref{lemmainres1r} since $\dot\varphi$ is bounded.
 	Using the dominated convergence theorem, we get that $I_{3,n}$ and $I_{4,n}$ converge to $0$ as $n$ goes to infinity.
\end{proof}
The following result is a corollary of the compactness result given by Lemma \ref{lemmainres1r} and Corollary \ref{maincor}.

\begin{proposition}
\label{prop:convXn}
	For any fixed $ t \in [0,T]$, with $T$ small and $x \in \mathbb{R}$, the sequence $(X^{n,x}_t)_{n\geq1}$ of strong solutions to the SDE \eqref{eqmainr} converges strongly in $L^2(\Omega,P;\mathbb{R})$ to $E\Big[X^x_t|\mathcal{F}_t\Big]$.
\end{proposition}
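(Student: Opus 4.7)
The plan is to combine the two ingredients that have just been established, namely the relative compactness of $(X^{x,n}_t)_{n\ge1}$ in $L^2(\Omega, P;\mathbb{R})$ provided by Corollary \ref{maincor}, together with the weak convergence of $(X^{x,n}_t)_{n\ge 1}$ to $E[X^x_t \mid \mathcal{F}_t]$ in $L^2(\Omega, P; \mathcal{F}_t)$ provided by Lemma \ref{lem:weak conv weak sol} applied with the polynomial-growth function $h(z)=z$.

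First I would invoke Corollary \ref{maincor} to conclude that every subsequence of $(X^{x,n}_t)_{n\ge 1}$ admits a further subsequence $(X^{x,n_k}_t)_{k\ge 1}$ that converges strongly in $L^2(\Omega,P;\mathbb{R})$ to some limit $Y \in L^2(\Omega,P;\mathbb{R})$. Since each $X^{x,n_k}_t$ is $\mathcal{F}_t$-measurable, so is $Y$. Second, I would note that strong convergence in $L^2$ implies weak convergence in $L^2$, and then apply Lemma \ref{lem:weak conv weak sol} with $h(z)=z$, which is of polynomial growth, to identify the weak limit as $E[X^x_t \mid \mathcal{F}_t]$. By uniqueness of weak limits in $L^2$, this forces $Y = E[X^x_t \mid \mathcal{F}_t]$ $P$-a.s.

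Finally, a standard subsequence argument promotes this to convergence of the full sequence: since every subsequence admits a further subsequence converging strongly in $L^2$ to the same limit $E[X^x_t \mid \mathcal{F}_t]$, the whole sequence $(X^{x,n}_t)_{n\ge 1}$ converges strongly to $E[X^x_t \mid \mathcal{F}_t]$ in $L^2(\Omega, P;\mathbb{R})$. Note that the uniform $L^2$-boundedness needed to take weak limits along any subsequence is guaranteed by the bound \eqref{eqweaklim1} in the proof of Lemma \ref{lem:weak conv weak sol}.

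There is no real obstacle in this step: the work has already been done in establishing compactness (Lemma \ref{lemmainres1r} and Corollary \ref{maincor}) and in identifying the candidate limit (Lemma \ref{lem:weak conv weak sol}). The only care required is the use of the canonical space assumption in the Cameron--Martin argument of Lemma \ref{lem:weak conv weak sol}, which is already in force here, and checking that $h(z)=z$ falls within the polynomial growth class covered by that lemma, which is immediate.
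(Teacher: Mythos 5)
Your argument is correct and is essentially identical to the paper's: both use Corollary \ref{maincor} for relative compactness, Lemma \ref{lem:weak conv weak sol} with $h(x)=x$ to identify the weak limit as $E[X^x_t\mid\mathcal{F}_t]$, and then pass from subsequences to the full sequence by uniqueness of the limit (the paper phrases this last step as a proof by contradiction, which is equivalent to your subsequence-of-subsequence argument).
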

 			
\begin{proof}
	Observe that by the compactness criteria for each $t$, there exists a subsequence $(X^{x,n_k}_t)_{k\geq1}$ that converges strongly in $L^2(\Omega,P)$.  From the previous lemma, we get by setting $h(x)=x, x\in \mathbb{R}$ that $(X^{x,n_k}_t)_{n\geq1}$ converges weakly to $E\Big[X^x_t|\mathcal{F}_t\Big]$ in $L^2(\Omega,P)$ and therefore by the uniqueness of the limit there exists a subsequence $n_k$ such that $(X^{x,n_k}_t)_{n\geq1}$ converges strongly to $E\Big[X^x_t|\mathcal{F}_t\Big]$ in $L^2(\Omega,P)$. The convergence then holds for the entire sequence by uniqueness of limit. Indeed, by contradiction, suppose that for some $t$, there exist $\epsilon >0$ and a subsequence $n_l, l\geq 0$ such that  
	$$
		\|X_t^{x,n_l}-E[X^x_t|\mathcal{F}_t]\|_{L^2(\Omega,P)}\geq \epsilon.
	$$
	We also know by the compactness criteria that there exists a further subsequence of $n_m, m\ge 0$ of $n_l, l\geq 0$ such that 
	$$
		X_t^{x,n_{n_m}} \text{ converges to } \tilde{X}_t \text{ in } L^2(\Omega,P) \text{ as } m \text{ goes to } \infty .
	$$
	Nevertheless, $(X^{x,n_k}_t)_{n\geq1}$ converges weakly to $E\Big[X^x_t|\mathcal{F}_t\Big]$ in $L^2(\Omega,P)$, and hence by the uniqueness of the limit, we have 
	$$
		\tilde{X}_t=E\Big[X^x_t|\mathcal{F}_t\Big].
 	$$ 
	Since 
 	$$
 		\|X_t^{x,n_{n_m}}-E[X^x_t|\mathcal{F}_t]\|_{L^2(\Omega,P)}\geq \epsilon,
 	$$
 	this is a contradiction.
\end{proof}

\paragraph{2.2.4. Adaptedness of the weak solution and uniqueness.} 			
Finally, we show that the weak solution $X^x_t $ is $(\mathcal{F}_t)_{ t\in [0,T]}$-adapted and unique.
 			
\begin{theorem}
\label{thm:adapted}
	The weak solution $X^x_t $ to the SDE \eqref{eqmain1r} is $\mathcal{F}_t$-measurable. 
\end{theorem}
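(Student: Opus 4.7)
The plan is to exploit the Hilbert-space orthogonal decomposition in $L^2(\Omega,P)$: a square-integrable random variable $Y$ equals $E[Y\mid\mathcal{F}_t]$ $P$-almost surely if and only if $E[Y^2]=E[(E[Y\mid\mathcal{F}_t])^2]$, because of the Pythagorean identity
\[
E[Y^2]=E[(Y-E[Y\mid\mathcal{F}_t])^2]+E[(E[Y\mid\mathcal{F}_t])^2].
\]
Applied to $Y:=X^x_t$, matching the two sides will force $X^x_t = E[X^x_t\mid\mathcal{F}_t]$, i.e.\ $\mathcal{F}_t$-measurability. Before doing so, I would first check that $E[(X^x_t)^2]<\infty$, which follows from a Girsanov/Cameron--Martin computation exactly as in \eqref{eqweaklim1}: on the weak existence space, $X^x_t=x+\sigma\cdot\hat B_t$ and the density defined in Lemma \ref{bengen} has high enough moments thanks to \eqref{eq:expo moment b2}.

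Next, I would apply Lemma \ref{lem:weak conv weak sol} to the polynomial function $h(z):=z^2$ (which is of polynomial growth, hence covered) to deduce that $(X^{x,n}_t)^2$ converges weakly in $L^2(\Omega,P;\mathcal{F}_t)$ to $E[(X^x_t)^2\mid\mathcal{F}_t]$. Testing this weak convergence against the constant $1\in L^2(\Omega,P;\mathcal{F}_t)$ gives
\[
\lim_{n\to\infty} E\!\left[(X^{x,n}_t)^2\right] = E\!\left[E[(X^x_t)^2\mid\mathcal{F}_t]\right] = E\!\left[(X^x_t)^2\right].
\]
In parallel, Proposition \ref{prop:convXn} asserts that $X^{x,n}_t \to E[X^x_t\mid\mathcal{F}_t]$ \emph{strongly} in $L^2(\Omega,P)$, so norms converge and
\[
\lim_{n\to\infty} E\!\left[(X^{x,n}_t)^2\right] = E\!\left[\bigl(E[X^x_t\mid\mathcal{F}_t]\bigr)^2\right].
\]

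Combining the two displays yields $E[(X^x_t)^2] = E[(E[X^x_t\mid\mathcal{F}_t])^2]$, which by the Pythagorean identity above forces $E[(X^x_t-E[X^x_t\mid\mathcal{F}_t])^2]=0$. Consequently $X^x_t=E[X^x_t\mid\mathcal{F}_t]$ $P$-a.s., so $X^x_t$ is $\mathcal{F}_t$-measurable. The step I expect to require the most care is verifying that Lemma \ref{lem:weak conv weak sol} genuinely applies to $h(z)=z^2$ with the right limit inside the conditional expectation: inspecting its proof one sees that only polynomial growth of $h$ and the exponential integrability \eqref{eq:expo moment b2} of $M_2$ are used, both of which hold, so no additional argument is needed. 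Uniqueness of the strong solution can then be read off from uniqueness in law (a consequence of the Girsanov construction in Lemma \ref{lem:weak existence}) combined with the strong existence just established, in the spirit of the Yamada--Watanabe principle.
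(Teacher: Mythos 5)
Your proof is correct, and it reaches the conclusion by a genuinely different route from the paper. The paper's argument takes an arbitrary globally Lipschitz $h$, extracts from Proposition \ref{prop:convXn} a subsequence along which $h(X^{x,n_k}_t)\to h(E[X^x_t\mid\mathcal{F}_t])$ almost surely, combines this with the weak limit $E[h(X^x_t)\mid\mathcal{F}_t]$ from Lemma \ref{lem:weak conv weak sol}, and concludes $h(E[X^x_t\mid\mathcal{F}_t])=E[h(X^x_t)\mid\mathcal{F}_t]$ for all Lipschitz $h$, from which measurability is read off (a step the paper leaves implicit). You instead use the single test function $h(z)=z^2$, which is admissible in Lemma \ref{lem:weak conv weak sol} since only polynomial growth is required, pair the weak convergence with the constant $1$ (i.e.\ $\varphi\equiv 0$ in the lemma's dense family) to identify $\lim_n E[(X^{x,n}_t)^2]=E[(X^x_t)^2]$, and pair the strong convergence of Proposition \ref{prop:convXn} with norm convergence to identify the same limit as $E[(E[X^x_t\mid\mathcal{F}_t])^2]$; the Pythagorean identity then does the rest. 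This buys you two things: no subsequence extraction and no quantification over a class of test functions, and the final measurability step is made completely explicit as the equality case of conditional Jensen, whereas the paper's version yields the (unused) stronger identity for all Lipschitz $h$. Your preliminary check that $E[(X^x_t)^2]<\infty$ is needed for the orthogonal decomposition and is correctly justified (on the weak-existence space $X^x_t=x+\sigma\cdot\hat B_t$, so the second moment is immediate). The closing remark on uniqueness via Yamada--Watanabe is extraneous to this statement -- the paper handles uniqueness separately in Proposition \ref{pro:uniqueness} by a Cameron--Martin shift argument -- but it does not affect the validity of the measurability proof.
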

\begin{proof}
	Let us first show that $X^x_t$ is $\mathcal{F}_t$-measurable. 
	Let $h$ be a globally Lipschitz continuous function. 
	By Proposition \ref{prop:convXn}, there exists a subsequence $n_k,k\geq 0$, such that $h(X^{x,n_k}_t)$ converges to $h(E[X^x_t|\mathcal{F}_t]) \,P$-a.s. as $k$ goes to infinity. Moreover, we know that $h(X^{x,n_k}_t)$ converges to $E[h(X^x_t)|\mathcal{F}_t]$ weakly in $L^2(\Omega,P)$ as $k$ goes to infinity. We get from the uniqueness of the limit that 
	$$
 		h(E[X^x_t|\mathcal{F}_t])=E[h(X^x_t)|\mathcal{F}_t]\,\,\,P\text{-a.s.}
 	$$
 	Since the above holds for any arbitrary globally Lipschitz continuous function, it follows that $X^x_t$ is $\mathcal{F}_t$-measurable.
\end{proof}
\begin{proposition}
\label{pro:uniqueness}
	The SDE \eqref{eqmain1r} satisfies the pathwise uniqueness property.
\end{proposition}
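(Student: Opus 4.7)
The plan is to apply the Yamada--Watanabe--Engelbert duality (see e.g.\ Cherny, 2003, or the original Engelbert, 1991): if an SDE admits a strong solution and satisfies uniqueness in law, then pathwise uniqueness holds. Strong existence has already been obtained in the preceding steps, culminating in Theorem~\ref{thm:adapted}. It therefore suffices to establish uniqueness in law, and this is what I would prove directly from a Girsanov argument of the same flavour as that already used in Lemma~\ref{lem:weak existence}.

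To this end, I would let $(X,B)$ be an arbitrary weak solution of \eqref{eqmain1r} on some filtered probability space $(\Omega',\mathcal F',P')$ with initial condition $x$, and let $u$ be defined as in \eqref{eqgirs1}. Lemma~\ref{bengen}, whose hypotheses depend only on the linear growth of $b_1$ and the exponential moment \eqref{eq:expo moment b2} of $M_2$, shows that $\mathcal E\bigl(-\int_0^{\cdot} u(s,X_s,\omega)\,dB_s\bigr)$ is a true $P'$-martingale. Setting $dQ/dP' := \mathcal E\bigl(-\int u(s,X_s,\omega)\,dB_s\bigr)_T$, Girsanov's theorem yields that $\tilde B_t := B_t + \int_0^t u(s,X_s,\omega)\,ds$ is a $Q$-Brownian motion, and since $\sigma\cdot u = b$, a direct substitution in \eqref{eqmain1r} gives $X_t = x + \sigma\cdot\tilde B_t$ under $Q$. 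Consequently, for any bounded measurable $F$ on $C([0,T],\mathbb R)$,
\begin{equation*}
E_{P'}[F(X)] = E_Q\!\left[F(x+\sigma\cdot\tilde B)\,\mathcal E\!\left(\int_0^{\cdot} u(s,x+\sigma\cdot\tilde B_s,\omega)\,d\tilde B_s\right)_T\right].
\end{equation*}
Because $b_2$ is non-anticipating, under $Q$ the path $B = \tilde B - \int u\,dr$ is a measurable functional of $\tilde B$, so the integrand on the right-hand side is a fixed measurable functional of the standard Brownian motion $\tilde B$ determined solely by $b_1$, $b_2$ and $\sigma$. The $P'$-law of $X$ is therefore independent of the chosen weak solution, which proves uniqueness in law. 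Combined with strong existence, the Yamada--Watanabe--Engelbert theorem then yields pathwise uniqueness.

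The delicate point, I expect, will be the consistent interpretation of the random coefficient $b_2$ across different probability spaces: $b_2$ is prescribed as a non-anticipating functional on the canonical Wiener space, and must be read on $\Omega'$ through the Brownian motion $B$ before being tracked through the change of measure $B \mapsto \tilde B - \int u\,dr$. This bookkeeping, while routine, must be carried out with some care, and would draw on the same Cameron--Martin computations already exploited in Lemma~\ref{lem:weak conv weak sol}.
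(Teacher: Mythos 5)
Your overall strategy (strong existence plus uniqueness in law implies pathwise uniqueness) is reasonable in outline, but the proof of uniqueness in law has a genuine gap, and it occurs exactly where the randomness of $b_2$ bites. After your change of measure, the integrand $\mathcal{E}\big(\int u(s,x+\sigma\cdot\tilde B_s,\omega)\,d\tilde B_s\big)_T$ still depends on $\omega$ through $b_2$, i.e.\ through the path of the \emph{original} Brownian motion $B$, and you dispose of this by asserting that $B=\tilde B-\int u\,dr$ is a measurable functional of $\tilde B$. This is unjustified: the relation $B_t=\tilde B_t-\int_0^t u(s,x+\sigma\cdot\tilde B_s,B_\cdot)\,ds$ is an \emph{implicit} equation for $B$ given $\tilde B$, whose unique solvability does not follow from non-anticipativity (no regularity of $b_2$ in $\omega$ is assumed), and in general the filtration generated by $\tilde B$ is strictly smaller than that of $B$. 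This is precisely the phenomenon at the heart of Lemma \ref{lem:weak existence}: there the weak solution fails to be a priori adapted to the new Brownian motion, and Sections 2.2.2--2.2.4 are devoted to overcoming exactly that. So your claim that the right-hand side is ``a fixed measurable functional of $\tilde B$'' is essentially equivalent to the adaptedness statement you are taking as input, and the argument is circular. A secondary issue: for an SDE whose drift is a functional of the driving noise, the Cherny/Engelbert converse of Yamada--Watanabe does not apply off the shelf; in the general (Kurtz-type) framework one needs \emph{joint} uniqueness in law of the pair $(X,B)$ together with a compatibility condition, which uniqueness of the law of $X$ alone does not deliver. (A minor further point: Lemma \ref{bengen} gives the martingale property of the exponential with $u$ evaluated along $\sigma\cdot B$, not along an arbitrary weak solution $X$; that extension would also need to be argued.)

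For comparison, the paper sidesteps both difficulties by working entirely on the canonical space, where the two solutions $X^x$ and $\tilde X^x$ are driven by the same $B$ and see the same realization of $b_2$. It tests both against the total family $\{\mathcal{E}(\int_0^T\dot\varphi_u\,dB_u):\varphi\in C^1_b([0,T],\mathbb{R}^d)\}$, uses the Cameron--Martin theorem to rewrite $E[X^x_t\,\mathcal{E}(\int\dot\varphi\,dB)]$ as $\int_\Omega X^x_t(\omega+\varphi)\,dP(\omega)$, notes that the shifted processes solve one and the same shifted SDE \eqref{eq:CM sde} and hence (by the Girsanov argument adapted from Karatzas--Shreve) have the same law, and concludes $X^x_t=\tilde X^x_t$ $P$-a.s.\ from the density of these exponentials in $L^2(\Omega,P)$. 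That last density step is what replaces the Yamada--Watanabe--Engelbert machinery and upgrades a distributional identity to an almost sure one; if you want to salvage your approach, you would need to supply an argument of comparable strength in place of the unproven measurability claim.
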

\begin{proof}
	Let $X^x_t$ and $\tilde X^x_t$ be two solutions to the SDE \eqref{eqmain1r}. 	For $\varphi \in C^1_b([0,T],\mathbb{R}^d)$, 
	we have  
 	\begin{align}
 		E\Big[X^x_t\mathcal{E}\Big(\int_0^T\dot\varphi_u\diffns B_u\Big)\Big]=\int_{\Omega} X^x_t(\omega+\varphi)\diffns P(\omega),
 	\end{align}
 	where, as shown in the course of the proof of Lemma \ref{lem:weak conv weak sol}, $X^x_t(\omega+\varphi)$ satisfies the SDE \eqref{eq:CM sde} .
 	Similarly $\tilde X^x_t(\omega+\varphi)$ satisfies the same SDE. Thus, since the drift is of linear growth, it follows that $(X^x_t(\omega+\varphi),B)$ and $(\tilde X^x_t(\omega+\varphi),B)$ have the same distribution.
 	In fact, using that the distributions $P^x$ and $\tilde P^x$ of $X^x$ and $\tilde X^x$, respectively are equal to $P$ (see the construction in \cite[Proposition 3.6]{KaShr88}) it follows that by assumptions on $b_2$ and the linear growth of $b_1$ that $\int_0^T|b_1(t,X_t^x) + (\sigma\dot\varphi_t +b_2(t,X_t^x,\omega+\varphi))|^2\diffns t<\infty$ $P^x$-a.s.
 	The same holds if $X^x$ is replaced by $\tilde X^x$ and $P^x$ by $\tilde P^x$.
 	Thus, a simple adaptation of the proof of \cite[Porposition 3.10]{KaShr88} shows that $(X^x_t(\omega+\varphi),B)$ and $(\tilde X^x_t(\omega+\varphi),B)$ have the same distribution.
 	Hence, for all $t,\varphi$, we have $E\Big[\tilde X^x_t\mathcal{E}\Big(\int_0^T\dot\varphi_u\diffns B_u\Big)\Big]=E\Big[X^x_t\mathcal{E}\Big(\int_0^T\dot\varphi_u\diffns B_u\Big)\Big]$, from which pathwise uniqueness follows.
\end{proof}

\paragraph{2.2.5. Global existence.}
Since the small time $T_1$ for which the solution exists does not depend on the initial condition (see \eqref{eq:cond small time 1}) one can use a standard pasting argument to show that the solution exists for all time $T>0$.
In addition
using the linear growth condition on $b_1$ and the integrability condition on $b_2$, it follows from Gronwall lemma that the unique solution does not explode. 

The proof of Theorem \ref{thmainres1r} is now complete.
\hfill$\Box$
\begin{remark}
\label{rem:muldi-d}
	For the solvability of the SDE \eqref{eq:sde into} in the multi-dimensional case, the main  technical hurdle is the use of the explicit solution of the (random) ODE \eqref{malldifeqr} in the proof of Lemma \ref{lemmainres1r}, which does not carry-over to the multi-dimensions.
	An alternative approach could be to employ the iteration procedure used in \cite{MeMo17}.
	But in that case, due to the randomness of the drift, the estimations do not reduce to the applications of the key integration by parts estimate in Proposition \ref{pro:uniqueness} which in turn was an important argument in the proof of compactness of the approximating sequence.
\end{remark}
\section{Malliavin differentiability}
\label{sec:SDE.Malliavin}
\subsection{Differentiability of the strong solution}

In this subsection, we show that the unique strong solution of the SDE \eqref{eqmain1r} constructed in the previous subsection is Malliavin differentiable and we derive a representation formula of the Malliavin derivative.
The proof Malliavin differentiability for small time interval follows directly from Lemma \ref{lemmainres1r}. 
In order to control the Malliavin derivative of the process on arbitrary time intervals, we need the following result which is a variant of Fernique theorem whose proof is similar to that of \cite[Lemma 2.6]{MeMo17}.

\begin{lemma} \label{lemmaproexpr1}
	Let $t_0 \in [0, 1]$ and let $\eta : \Omega \to \mathbb{R}$ be a $\mathcal F_{t_0}$-measurable random variable independent of the $P$-augmented filtration generated by the Brownian motion $B$. 
	Let $b: [0,1] \times \Omega \times \mathbb{R}\rightarrow \mathbb{R}$ such that $b=b_1+b_2$ satisfies \ref{a1}, with $b_1$ a smooth coefficient with compact support satisfying a global linear growth condition. 
 	Denote by $X^{t_0,\eta}_{t}$ the unique strong solution \textup{(}if it exists\textup{)} to the SDE \eqref{eqmain1r} starting at $\eta$ and with drift coefficient $b$. Then we can find a positive  number $\delta_0$ independent of $t_0$ and $\eta$ \textup{(}but may depend on $||\tilde b_1||_\infty$\textup{)} such that
 	\begin{align}\label{eqsupproAr1}
 		E\exp \{\delta_0 \sup_{t_0\leq t\leq 1}|X^{n,t_0,\eta}_{t}|^2 \} \leq C_1 E\exp \{C_2\delta_0|\eta|^2 \},
 	\end{align}
 	where $C_1, C_2$ are positive constants independent of $\eta$, but may depend on $k_1$ and $b_2$. Moreover, $C_1$ may depend on $\delta_0$. Furthermore, if the right hand side of \eqref{eqsupproAr1} is finite then the above expectation is finite.
\end{lemma}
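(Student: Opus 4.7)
The plan is to adapt the Fernique-type argument of \cite[Lemma 2.6]{MeMo17}: combine a pathwise Gr\"onwall estimate (exploiting the linear growth of $b_1$ and the uniform bound of $b_2$ by $M_2$) with H\"older's inequality to decouple the contributions of $\eta$, $M_2$, and the driving Brownian increment, and then invoke the standard Fernique exponential-moment bound for $\sup|B|^2$ together with the hypothesis \eqref{eq:expo moment b2}.

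First I would write the SDE on $[t_0,1]$ in integral form and use $|b_1(s,x)|\le \|\tilde b_1\|_\infty(1+|x|)$ together with $|b_2(s,x,\omega)|\le M_2(\omega)$ to obtain
\begin{equation*}
|X^{t_0,\eta}_t|\le |\eta| + \|\tilde b_1\|_\infty + M_2(\omega) + |\sigma|\sup_{t_0\le s\le 1}|B_s - B_{t_0}| + \|\tilde b_1\|_\infty\int_{t_0}^t|X^{t_0,\eta}_s|\,ds.
\end{equation*}
Applying Gr\"onwall's inequality on $[t_0,1]$ yields a pathwise estimate
\begin{equation*}
\sup_{t_0\le t\le 1}|X^{t_0,\eta}_t|\le K\bigl(1+|\eta|+M_2(\omega)+|\sigma|\sup_{t_0\le s\le 1}|B_s-B_{t_0}|\bigr)
\end{equation*}
with a constant $K=K(\|\tilde b_1\|_\infty)$ independent of $t_0\in[0,1]$ and of $\eta$.

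Next, squaring, using $(a_1+a_2+a_3+a_4)^2\le 4\sum_i a_i^2$, multiplying by $\delta_0>0$, exponentiating, and applying H\"older's inequality with exponents $(3,3,3)$ to the three stochastic factors gives
\begin{equation*}
E e^{\delta_0\sup_t|X^{t_0,\eta}_t|^2}\le e^{4K^2\delta_0}\bigl(Ee^{12K^2\delta_0|\eta|^2}\bigr)^{1/3}\bigl(Ee^{12K^2\delta_0 M_2^2}\bigr)^{1/3}\bigl(Ee^{12K^2|\sigma|^2\delta_0\sup_{t_0\le s\le 1}|B_s-B_{t_0}|^2}\bigr)^{1/3}.
\end{equation*}
Choosing $\delta_0$ small enough that $12K^2\delta_0\le C$, with $C$ as in \eqref{eq:expo moment b2}, bounds the second factor by a constant depending only on $b_2$; shrinking $\delta_0$ further renders the third factor finite by Fernique's theorem, uniformly in $t_0\in[0,1]$ (since $\sup_{t_0\le s\le 1}|B_s-B_{t_0}|$ is stochastically dominated by $2\sup_{s\le 1}|B_s|$). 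The first factor is bounded above by $Ee^{12K^2\delta_0|\eta|^2}$ because the expectation is at least $1$, giving the desired bound with $C_2=12K^2$ and $C_1$ depending only on $k_1$, $|\sigma|$, $b_2^{\mathrm{exp}}$, and $\delta_0$.

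The main technical point is ensuring that $\delta_0$ can be chosen independently of $t_0$ and of $\eta$. Uniformity in $t_0$ follows because both the Gr\"onwall constant $K$ and the Fernique threshold for the Brownian increment over $[t_0,1]$ are uniform in $t_0\in[0,1]$; independence of $\eta$ is automatic since $\eta$ enters only through the first H\"older factor, which is precisely the quantity appearing on the right-hand side of \eqref{eqsupproAr1}. The last sentence of the lemma is then an immediate consequence of the displayed bound.
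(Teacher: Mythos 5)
Your proof is correct and follows essentially the same route as the paper's: an integral-form estimate using the linear growth of $b_1$ and the bound $|b_2|\le M_2$, Gr\"onwall, a three-way H\"older decoupling of the $\eta$, $M_2$ and Brownian factors, Fernique's theorem for the Brownian supremum, and the exponential moment hypothesis \eqref{eq:expo moment b2} for $M_2$. The only (immaterial) difference is that you apply Gr\"onwall to $|X^{t_0,\eta}_t|$ before squaring, whereas the paper squares first and applies Gr\"onwall to $|X^{t_0,\eta}_t|^2$.
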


\begin{proof}
	We have the following almost sure equality
 	\begin{equation}
 		\begin{array}{ll}\label{eqlemest1}
 			X^{t_0,\eta}_{t} = \eta +  \displaystyle \int_{t_0}^{t} \Big(b _1(u,X^{t_0,\eta}_{u})+ b_2(u,X^{t_0,\eta}_{u},\omega)  \Big)\diffns u + \sigma\cdot (B_t -B_{t_0}),  \quad t_0 \leq t \leq 1.
 		\end{array}
 	\end{equation}
 	Successive application of H\"older's inequality to \eqref{eqlemest1} yields
 	\begin{align} \label{eqlemest2}
 		|X^{t_0,\eta}_{t}|^2 &\leq 4|\eta|^2 + 4 \Big | \int_{t_0}^{t} b (u,X^{t_0,\eta}_{u}) \diffns u \Big |^2 +4 \Big | \int_{t_0}^{t} b_2(u,X^{t_0,\eta}_{u},\omega) \diffns u \Big |^2 + 4|\sigma|^2|B_t -B_{t_0}|^2\notag  \\
 		&\leq  4|\eta|^2 + 4 \Big(\int_{t_0}^{t} k_1(1+ |X^{t_0,\eta}_{u}|) \diffns u  \Big )^2 +4 \Big | \int_{t_0}^{t} b_2(u,X^{t_0,\eta}_{u},\omega) \diffns u \Big |^2 + 4|\sigma|^2|B_t -B_{t_0}|^2\notag \\
 		&\leq   4|\eta|^2 + 8 k^2_1  (t-t_0)\int_{t_0}^{t} \{1+ |X^{t_0,\eta}_{u}|^2 \} \diffns u + 4 (t-t_0)\int_{t_0}^{t} |b_2(u,X^{t_0,\eta}_{u},\omega)|^2 \diffns u + 4|\sigma|^2|B_t -B_{t_0}|^2 \notag\\
		&\leq 4|\eta|^2 + 8 k^2_1 (t-t_0)^2 + 8 k^2_1 (t-t_0) \int_{t_0}^{t} |X^{t_0,\eta}_{u}|^2 \diffns u 
 		 +4 (t-t_0)^2 C^2_T|M_2(\omega)|^2 +4|\sigma|^2|B_t -B_{t_0}|^2,   \text{ a.s. }
 	\end{align}
 	Take the supremum on both sides of \eqref{eqlemest2} and multiply by $\delta_0$ to get
  	\begin{align}\label{eqlemest3}
 		\delta_0\sup_{t_0\leq t\leq 1} |X^{t_0,\eta}_{t}|^2
 		\leq	&  4\delta_0|\eta|^2 + 8 k^2_1 \delta_0+ 8 k^2_1  \int_{t_0}^{1}\delta_0 \sup_{0\leq u\leq s} |X^{t_0,\eta}_{u}|^2 \diffns s \notag\\
 		&+4 \delta_0 (t-t_0)^2 C^2_T|M_2(\omega)|^2  +4 |\sigma|^2\delta_0\sup_{t_0 \leq t \leq 1}|B_t -B_{t_0}|^2 ,   \text{ a.s. }
 	\end{align}
 	Applying Gronwall's lemma to \eqref{eqlemest3}, we have
 	\begin{align}\label{eqlemest4}
 		\delta_0\sup_{t_0\leq t\leq 1} |X^{t_0,\eta}_{t}|^2
 		&\leq \left\{4\delta_0|\eta|^2 + 8 k^2_1 \delta_0 +4 \delta_0|\sigma|^2\sup_{t_0 \leq u \leq 1}|B_t -B_{t_0}|^2  
 		 +4 \delta_0  |M_2(\omega)|^2	 \right\}  e^{8 k^2_1 }  ,   \text{ a.s. }
 	\end{align}
 	Now, set $C_2:=4e^{8 k^2_1}$. Then taking exponential on both sides of \eqref{eqlemest4}, we have 
 	\begin{align}\label{eqlemest5}
 		\exp \left\{\delta_0\sup_{t_0\leq t\leq 1} |X^{t_0,\eta}_{t}|^2 \right\}
 		&\leq  \exp \left\{2C_2\delta_0k^2\right\} \times \exp \{\delta_0 C_2|\eta|^2\}\times \exp \left\{C_2\delta_0|\sigma|^2\displaystyle\sup_{t_0 \leq u \leq 1}|B_u -B_{t_0}|^2 \right\} \notag\\
 		&\quad \times \exp \left\{C_2\delta_0|M_2(\omega)|^2	\right\},\quad P   \text{-a.s.}
 	\end{align}
 	Taking expectations in the above  and using once more H\"older inequality, we get
 	\begin{align}\label{eqlemest6}
 		E\exp \left\{\delta_0\sup_{t_0\leq  t\leq 1}|X^{t_0,\eta}_{t}|^2\right\}
 		&\leq \exp \left\{2C_2k^2\delta_0\right\} \cdot E\left[\exp \left\{3C_2\delta_0|\eta|^2\right\}\right]^{\frac{1}{3}}\times E\left[\exp \left\{3C_2\delta_0\|\sigma\|^2\sup_{t_0 \leq u \leq t}|B_u -B_{t_0}|^2 \right\} \right]^{\frac{1}{3}}\notag\\
 		&\quad \times E\left[ \exp \left\{3C_2\delta_0|M_2(\omega)|^2\right\}\right]^{\frac{1}{3}}  .
 	\end{align}
 	The result follows provided that we find  $\delta_0$ independent of $\eta$ and $t_0$ such that
 	\begin{equation}\label{eqlemest7}
 		\begin{array}{ll}
 			E\left[\exp \left\{3C_2\delta_0|\sigma|^2\displaystyle\sup_{t_0 \leq u \leq 1 }|B_u -B_{t_0}|^2 \right\}\right]  < \infty   .
 		\end{array}
 	\end{equation}
 	The estimate \eqref{eqlemest7} is obtained from the Fernique theorem.
 	In fact, one can show that for $\delta_0<\min(\frac{1}{12dC_2|\sigma|^2},\frac{1}{C_2})$, the result holds (see for example \cite[Lemma 2.6]{MeMo17} for details).
 	From this \eqref{eqlemest6} yields
 	\begin{align}
 		E\exp \left\{\delta_0\sup_{t_0\leq t\leq 1}|X^{n,t_0,\eta}_{t}|^2 \right\}  \leq C_1 Ee^{C_2\delta_0|\eta|^2}.
 	\end{align}
 	Note that $C_1, C_2$ and $\delta_0$ are independent of $\eta$ and $t_0$ (but may depend on $||\tilde b_1||_\infty$  and $ |\sigma|^2$). Thus \eqref{eqsupproAr1} is valid for the above choice of $\delta_0$.
\end{proof}

Next, using Lemma \ref{lemmaproexpr1}, we prove that under the conditions of Theorem \ref{thmainres1r} the Malliavin derivative is bounded in the $L^2(\Omega,P)$ norm.

\begin{proof}[of Theorem \ref{thm:Mall.diff.SDE}]  
	First recall that by the second part of Lemma \ref{lemmainres1r}, the sequence of strong solutions of the SDE \eqref{eqmainr} 
	satisfies
	$$
	\sup_{0 \leq t \leq T} \sup_{n\geq 1}E \left[ | D_t X^{x,n}_s |^2 \right] \leq{\cal C}(\|\tilde{b}_1\|_{\infty},|x|^2,b_2^{\text{power}}).
	$$
	Since the Malliavin derivative is a closable operator (see e.g. \cite[Exercise 1.2.3]{Nua06}), it follows from Proposition \ref{prop:convXn} and Theorem \ref{thm:adapted} that $X^x_t$ is Malliavin smooth.

	It remains to prove integrability of the derivative.
	This is done by induction. Choose $\delta_0$ as in Lemma \ref{lemmaproexpr1} and define $$ \tau:=\frac{\delta_0}{64d\sqrt{2} k^2}\le T_1,$$ let $s_i = i\tau$ and $x_i:= X^{x,n,0}_{s_i} , 
 	i \geq 1$. It follows form the previous argument that the result is valid for $0\leq t\leq s_1$.

	Assume that there exists a Malliavin differentiable solution $\{X_t,0\leq t\leq s_{m}\}$. Set $t$ such that $s_{m}\leq t< s_{m+1}$.
 	Let	 $(X^{x,n}_{t})_{n \geq 1}$ be the approximating sequence defined by \eqref{eqmainr} it satisfies the a.s. relation
 	\begin{equation*}
 		\begin{array}{ll}
 		X^{x_m,n}_{t} = X_{s_m}^n +  \displaystyle \int_{s_m}^{t} b_n (u,X^{x_m,n}_{u},\omega)  \diffns u + \sigma(B_t -B_{s_m}),  \quad  s_m\leq t \leq s_{m+1}.
 		\end{array}
 	\end{equation*}
 			
 	Using the chain-rule for the Malliavin derivatives, we have component wise
 	\begin{equation}
 		D^i_sX^{x_m,n}_t =\left\{\begin{array}{llll}
 			D^i_sX_{s_m}^{n}  + \int_{s_m}^t \Big(\{b_{1,n}'(u,X^{x_m,n}_u)+b_{2}'(u,X^{x_m,n}_u,\omega)\}D^i_s X^{x_m,n}_u  +D^i_sb_2(t,X^{x_m,n}_u,\omega)\Big)\diffns u  & \text{ if } s\leq s_m\\
 			\sigma_i +	\int_{s}^t \Big(\{b_{1,n}'(u,X_u)+b_{2}'(u,X^{x_m,n}_u,\omega)\}D^i_s X^{x_m,n}_u +D^i_sb_2(t,X^{x_m,n}_u,\omega) \Big)\diffns u  &  \text{ if } s>s_m
 			\end{array}\right.,
 	\end{equation}
 	 $P\text{-a.s.}$
 	for all $0 \leq s \leq t$ and solving explicitly gives
	\begin{align} 
 		&D^i_sX^{x_m,n}_t	=\notag\\
 		&\begin{cases}
 		e^{\int_{s_m}^t \{b_{1,n}'(u,X^{x_m,n}_u) +b_{2}'(u,X^{x_m,n}_u)\}\diffns u}\Big(\int_{s_m}^tD^i_sb_2(u,X^{x_m,n}_u)e^{-\int_{s_m}^u \{b_{1,n}'(r,X^{x_m,n}_r) +b_{2}'(r,X^{x_m,n}_r)\}\diffns r}\diffns u +  D^i_sX_{s_m}^n \Big) & \text{ if } s\leq s_m\\
 			e^{\int_t^s \{b_{1,n}'(u,X^{x_m,n}_u) +b_{2}'(u,X^{x_m,n}_u)\}\diffns u}\Big(\int_t^sD^i_sb_2(u,X^{x_m,n}_u)e^{-\int_t^u \{b_{1,n}'(r,X^{x_m,n}_r)+b_{2}'(r,X^{x_m,n}_r)\} \diffns r}\diffns u + \sigma_i\Big) &  \text{ if } s>s_m
 		\end{cases}
 		\label{eqthemainr1}
 	\end{align}
$P\text{-a.s.}$	for all $0 \leq s \leq t$.
	Let us first focus on $D_sX^{x_m,n}_t,$ when $s\leq s_m$. Using H\"older inequality, we have 
 	\begin{align*}
 			E\left[|D^i_sX^n_t|^2\right]	&\leq  2 E\Big[e^{\int_{s_m}^t 2\{b_{1,n}'(u,X^{x_m,n}_u)+b_{2}'(u,X^{x_m,n}_u,\omega)\} \diffns u}\Big(\int_{s_m}^tD^i_sb_2(u,X^{x_m,n}_u,\omega)e^{-\int_{s_m}^u\{ b_{1,n}'(r,X^{x_m,n}_r)+b_{2}'(r,X^{x_m,n}_r,\omega)\}  \diffns r}\diffns u\Big)^2\Big] \notag\\
 				&\quad + 2E\Big[ e^{2\int_{s_m}^t \{b_{1,n}'(u,X^{x_m,n}_u)+b_{2}'(r,X^{x_m,n}_r,\omega)\}  \diffns u}|D^i_sX_{s_m}^n |^2\Big]\notag\\
 				&\leq  2 E\Big[e^{\int_{s_m}^t 2\{b_{1,n}'(u,X^{x_m,n}_u)+b_{2}'(u,X^{x_m,n}_u,\omega)\} \diffns u}\Big(\int_{s_m}^tD^i_sb_2(u,X^{x_m,n}_u,\omega)e^{-\int_{s_m}^u\{ b_{1,n}'(r,X^{x_m,n}_r)+b_{2}'(r,X^{x_m,n}_r,\omega)\}  \diffns r}\diffns u\Big)^2\Big] \notag\\
 				&\quad + 4E\Big[  e^{\int_{s_m}^t 4b_{1,n}'(u,X^{x_m,n}_u) \diffns u}|D^i_sX_{s_m}^n |^2\Big] + E\Big[ e^{4\int_{s_m}^t b'_{2}(r,X^{x_m,n}_r,\omega)  \diffns u}|D^i_sX_{s_m}^n |^2\Big]\notag\\
 			&= 2 E\Big[e^{\int_{s_m}^t 2\{b_{1,n}'(u,X^{x_m,n}_u)+b_{2}'(u,X^{x_m,n}_u,\omega)\} \diffns u}\Big(\int_{s_m}^tD^i_sb_2(u,X^{x_m,n}_u,\omega)e^{-\int_{s_m}^u\{ b_{1,n}'(r,X^{x_m,n}_r)+b_{2}(r,X^{x_m,n}_r,\omega)\}  \diffns r}\diffns u\Big)^2\Big] \notag\\
&\quad +4 		E\Big[ 	E\Big[ e^{\int_{s_m}^t 4b_{1,n}'(u,X^{x_m,n}_u) \diffns u}|\mathcal{F}_{s_m}\Big]|D^i_sX_{s_m}^n |^2\Big] +4E\Big[ e^{8\int_{s_m}^t b_{2}'(r,X^{x_m,n}_r,\omega)  \diffns u}\Big]^{\frac{1}{2}}E\Big[ |D^i_sX_{s_m}^n |^4\Big]^{\frac{1}{2}}\notag\\
 			=&	I_1+I_2+ I_3.
 	\end{align*}
 	Let us now consider the conditional expectation part in $I_2$. As before, using Girsanov theorem and H\"older inequality, we have
 	\begin{align}\label{approxiexpr1}
 	& E\Big[ e^{\int_{s_m}^t 4b_{1,n}'(u,X^{x_m,n}_u) \diffns u}|\mathcal{F}_{s_m}\Big] 	\notag\\
 		&\leq E\Big[ \mathcal{E}\Big(\int_{s_m}^tu_n(u,\omega,X_{s_m}^n+\sigma\cdot(B_u-B_{s_m}))\diffns B_u\Big)e^{\int_{s_m}^t 4b_{1,n}'(u,X_{s_m}^n+\sigma\cdot (B_u-B_{s_m}))\diffns u}|\mathcal{F}_{s_m}\Big]\notag\\
 		&\leq E\Big[ e^{2\sum_{i=1}^d\int_{s_m}^{s_{m+1}}u_{i,n}(r,\omega,X_{s_m}^n+\sigma\cdot (B_r-B_{s_m}))\diffns B_r-2\sum_{i=1}^d\int_{s_m}^{s_{m+1}}u_{i,n}^2(r,\omega,X_{s_m}^n+\sigma\cdot(B_r-B_{s_m}))\diffns r}|\mathcal{F}_{s_m}\Big]^{\frac{1}{2}} \notag\\
 		&\quad \times E\Big[e^{6\sum_{i=1}^d\int_{s_m}^{s_{m+1}}u_{i,n}^2(r,\omega,X_{s_m}^n+\sigma\cdot(B_r-B_{s_m}))\diffns r}|\mathcal{F}_{s_m}\Big]^{\frac{1}{4}} E\Big[e^{16\int_{s_m}^t b_{1,n}'(u,X_{s_m}^n+\sigma\cdot (B_u-B_{s_m})) \diffns u}|\mathcal{F}_{s_m}\Big]^{\frac{1}{4}}.
 	\end{align}
 	By Girsanov theorem applied to the martingale $2\int_{s_m}^{s_{m+1}}b_n(r,X_{s_m}^n+B_r-B_{s_m},\omega)\diffns B_r$ the first term of the right hand side is equal to one. Since $b_{1,n}$ is of spatial linear growth, $B_r-B_{s_m}$ is independent of $\mathcal{F}_{s_m}$ and $X_{s_m}^n$ is $\mathcal{F}_{s_m}$-measurable, it follows from the H\"older inequality, the exponential expansion and the choice of $\tau$ that there exists a constant $C>0$ such that  
 	\begin{align} \label{approxiexpr2}
 		E\left[e^{16\int_{s_m}^{s_{m+1}}u_{i,n}^2(r,X_{s_m}^n+\sigma\cdot (B_u-B_{s_m}),\omega) \diffns r}|\mathcal{F}_{s_m}\right]^{\frac{1}{4}}\leq Ce^{6k^2_1\tau(1+|X_{s_m}^n|^2)}.
 	\end{align}
 	Next let us consider the last term. We can show as in \cite[Proposition 4.10 and (4.28)]{MeMo17} that there exists a constant $C>0$ such that
 	\begin{align} \label{approxiexpr3}
 		E\left[e^{16\int_{s_m}^t b_{1,n}'(u,X_{s_m}^n+\sigma\cdot (B_u-B_{s_m}))  \diffns u}|\mathcal{F}_{s_m}\right]^{\frac{1}{4}}\leq e^ {\tau C k_1|X_{s_m}^n| }.
 	\end{align}
 	Combining \eqref{approxiexpr1}-\eqref{approxiexpr3} and using H\"older inequality, we get
 	\begin{align} \label{approxiexpr4}
 		I_2&\leq CE\left[e^{6k^2_1\tau(1+|X_{s_m}^n|^2)}e^ {\tau C k_1|X_{s_m}^n| }|D^i_sX_{s_m}^n |^2 \right]\notag\\
  			&\leq CE\left[e^{24k^2_1\tau(1+|X_{s_m}^n|^2)}]^{\frac{1}{4}}E[e^ {4\tau C k_1|X_{s_m}^n| }]^{\frac{1}{4}}E[|D^i_sX_{s_m}^n |^4\right]^{\frac{1}{2}}.
 	\end{align}
 	Let us notice that one can show as in the proof of Lemma \ref{lemmainres1r} that there exists $C>0$ such that $E[|D^i_sX_{s_1}^n |^p]\leq C$ for $p\geq 1$ and by induction hypothesis it follows that $E[|D^i_sX_{s_m}^n |^p]\leq C$ for $p=4$. The choice of $\tau$ combined with Lemma \ref{lemmaproexpr1} ensures that one can find $C>0$ such that  $E[e^{24k^2_1\tau(1+|X_{s_m}^n|^2)}]^{\frac{1}{4}}\leq Ce^{C_2\delta_0|x|^2}$. Furthermore, using successive approximation, one can show that $E[e^ {4\tau C k_1|X_{s_m}^n| }]\leq C_1 e^ {C_2 k_1|x| }$, where $C_1$ depends on $b_2$ and $C_2$ is a positive constant. This can be shown as in the proof of Lemma \ref{lemmaproexpr1} using the Gronwall lemma and the probability distribution of the Brownian motion.  

 	The case $s>s_m$ is similar (and easier) since the term $D^i_sX^m_{s^m}$ is not involved, it is replaced by the constant $\sigma$.
 			
 	Since $b_2^{\text{power}}<\infty$, using the H\"older inequality, the term $I_1$ can also be bounded using similar arguments as above. 
 	Thus the Malliavin derivative of the approximating sequence $(X^{x,n}_{t})_{n \geq 1}$  has a uniform bound on $[0,T]$ which does not depend on $n$. 
 	Therefore, $D_sX_t \in L^2(\Omega,P;\mathbb{R}^d)$ for $0\leq s\leq t\leq T$.  
\end{proof}

\begin{remark} Let us observe that if $b_1$ is globally bounded then it follows from the condition on $b_2$ and Girsanov theorem that Lemma \ref{lemmainres1r} holds for all $T$. Therefore, we do not need the above argument and the Malliavin differentiablility of the solution directly follows from the compactness argument. 
 \end{remark}
The following result gives estimates on the Malliavin derivative of the solution.
\begin{theorem} \label{thm:estim diff Mall X}
	Assume that the conditions of Theorem \ref{thmainres1r} are satisfied.
	Then, the Malliavin derivative of the unique strong solution to the SDE \eqref{eqmain1r} satisfies:
	$$
		E \left[ | D_t X^{x}_s - D_{t'} X^{x}_s |^2 \right] \leq {\cal C}(\|\tilde{b}_1\|_{\infty},|x|^2,b_2^{\text{power}}) |t -t'|^{\alpha}
	$$
	for $0 \leq t' \leq t \leq T_1$, $\alpha = \alpha(s) > 0$ and
	$$
 		\sup_{0 \leq t \leq T_1} E \left[ | D_t X^{x}_s |^2 \right] \leq {\cal C}(\|\tilde{b}_1\|_{\infty},|x|^2, b^{\text{power}}_2),
	$$
	where ${\cal C}: [0, \infty)^3 \rightarrow [0, \infty)$ is continuous and increasing in each component and can differ from line to line, $T_1$ is small, and $||\tilde b_1||_\infty$ defined in \eqref{eq:def b infty}.
\end{theorem}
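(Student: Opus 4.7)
The plan is to pass the uniform-in-$n$ estimates established in Lemma \ref{lemmainres1r} through to the limit solution $X^x$. Recall that Lemma \ref{lemmainres1r} yields, for the approximating sequence $(X^{x,n})$ associated with the smoothed drifts $b_n = b_{1,n} + b_2$:
\begin{equation*}
	\sup_{n\geq 1}\sup_{0\le t\le T_1}E\bigl[|D_tX^{x,n}_s|^2\bigr]\le \mathcal{C}\bigl(\|\tilde b_1\|_\infty,|x|^2,b_2^{\mathrm{power}}\bigr),
\end{equation*}
and
\begin{equation*}
	\sup_{n\ge 1}E\bigl[|D_tX^{x,n}_s - D_{t'}X^{x,n}_s|^2\bigr]\le \mathcal{C}\bigl(\|\tilde b_1\|_\infty,|x|^2,b_2^{\mathrm{power}}\bigr)|t-t'|^\alpha,
\end{equation*}
with $\alpha = \alpha(s) > 0$. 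Moreover, by Proposition \ref{prop:convXn} combined with Theorem \ref{thm:adapted}, the sequence $(X^{x,n}_s)_{n\ge 1}$ converges strongly to $X^x_s$ in $L^2(\Omega,P)$ for each fixed $s \in [0,T_1]$.

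Fix $t \in [0,T_1]$ and $s \in [0,T_1]$. Since $(D_tX^{x,n}_s)_{n\ge 1}$ is a bounded sequence in the Hilbert space $L^2(\Omega;\mathbb{R}^d)$, the Banach--Alaoglu theorem furnishes a subsequence (not relabeled) such that $D_tX^{x,n}_s \rightharpoonup \zeta_t$ weakly in $L^2(\Omega;\mathbb{R}^d)$. The key identification step is to show $\zeta_t = D_tX^x_s$. This follows from the closability of the Malliavin derivative operator $D$, exactly as in the proof of Theorem \ref{thm:Mall.diff.SDE}: by a diagonalization argument, one obtains that the full process $(D_\cdot X^{x,n}_s)_n$ converges weakly in $L^2(\Omega\times[0,T_1];\mathbb{R}^d)$ to $D_\cdot X^x_s$, and the weak limit of the slice at time $t$ coincides with $D_tX^x_s$ up to a $dt$-null set. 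The uniform H\"older estimate ensures that the slices $t\mapsto D_tX^{x,n}_s$ are $L^2$-equicontinuous in $t$, which allows us to identify the slice $\zeta_t$ with $D_tX^x_s$ for \emph{every} $t \in [0,T_1]$.

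With the identification in hand, the two bounds follow by weak lower semi-continuity of the norm. Indeed,
\begin{equation*}
	E\bigl[|D_tX^x_s|^2\bigr]\le \liminf_{n\to\infty}E\bigl[|D_tX^{x,n}_s|^2\bigr]\le \mathcal{C}\bigl(\|\tilde b_1\|_\infty,|x|^2,b_2^{\mathrm{power}}\bigr),
\end{equation*}
uniformly in $t \in [0,T_1]$. Applying the same reasoning to the difference $D_tX^{x,n}_s - D_{t'}X^{x,n}_s$, which converges weakly in $L^2(\Omega;\mathbb{R}^d)$ to $D_tX^x_s - D_{t'}X^x_s$ along the same subsequence, yields
\begin{equation*}
	E\bigl[|D_tX^x_s - D_{t'}X^x_s|^2\bigr]\le \liminf_{n\to\infty}E\bigl[|D_tX^{x,n}_s - D_{t'}X^{x,n}_s|^2\bigr]\le \mathcal{C}|t-t'|^\alpha.
\end{equation*}

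The main obstacle, as in the proof of Theorem \ref{thm:Mall.diff.SDE}, lies in ensuring that the weak subsequential limits of $D_tX^{x,n}_s$ in $L^2(\Omega)$ can be consistently identified with $D_tX^x_s$ \emph{pointwise} in $t$ rather than merely for almost every $t$. This is precisely the role of the H\"older-in-$t$ control uniform in $n$ from Lemma \ref{lemmainres1r}, which provides the equicontinuity needed to upgrade a.e. identification to everywhere identification on $[0,T_1]$. Once this is in place, lower semi-continuity delivers both estimates simultaneously.
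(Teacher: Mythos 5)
Your proof follows essentially the same route as the paper's: pass the uniform-in-$n$ bounds of Lemma \ref{lemmainres1r} to the limit by noting that $(D_tX^{x,n}_s)_n$ converges weakly in $L^2(\Omega,P)$ to $D_tX^x_s$ (the paper invokes Corollary \ref{maincor} together with \cite[Lemma 1.2.3]{Nua06} for this identification) and then use weak lower semicontinuity of $A\mapsto E[|A|^2]$. Your additional care in upgrading the identification of the weak limit from $\diffns t$-a.e.\ to every $t$ via the uniform H\"older equicontinuity is a detail the paper glosses over, but the argument is the same.
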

					
\begin{proof} 

	Let $b_{1,n}$ be a sequence of smooth functions with compact support converging a.e. to $b_1$ and such that $b_{1,n}$ satisfies a uniform global linear growth; that is $\|\tilde{b}_{1,n}\|_\infty\leq \|\tilde{b}_{1}\|_\infty$, see \eqref{eq:def b infty} for definition. Let $X^n$ be the solution of the SDE \eqref{eqmainr} with drift $b_n: = b_{1,n} + b_2$.
	Then by Lemma \ref{lemmainres1r}, for every $n\in \mathbb{N}$ it holds
	$$
		E \left[ | D_t X^{n}_s - D_{t'} X^{n}_s |^2 \right] \leq {\cal C}(\|\tilde{b}_1\|_{\infty},|x|^2,b_2^{\text{power}}) |t -t'|^{\alpha}
	$$
	for $0 \leq t' \leq t \leq T_1$, $\alpha = \alpha(s) > 0$ and
	$$
 		\sup_{0 \leq t \leq T_1} E \left[ | D_t X^{n}_s |^2 \right] \leq {\cal C}(\|\tilde{b}_1\|_{\infty},|x|^2, b^{\text{power}}_2),
	$$
	where the r.h.s. do not depend on $n$.
	In particular, $D^i_tX^n_s$ is bounded in $L^2(\Omega, P)$.
	Thus, it follows from Corollary \ref{maincor} and \cite[Lemma 1.2.3]{Nua06} that (up to a subsequence) $(D^i_tX^n_s)_n$ converges to $D^i_tX_s$ in the weak topology of $L^2(\Omega, P)$.
	Since the function $L^2\ni A\mapsto E[|A|^2]$ is convex and lower semicontinuous, it is weakly lower semicontinuous.
	Thus, taking limits in $n$ on both sides above gives the results.
\end{proof}

\subsection{Representation and moment bounds for the Malliavin derivative}
In this subsection we give an explicit representation of the Malliavin derivative $DX^x$ of the solution $X^x$ of the SDE \eqref{eqmain1r}.
To that end, we will assume that the random part $b_2$ of the drift does not depend on $x$.
Such representation can be very useful to derive results concerning $DX^x$.
The representation we obtain will be given in terms of the time-space local time studied in details in \cite{Ein2000}.

In order to define the local time-space integral with respect to $L^{X^x}(t,z)$, we first start by introducing the space $\left(\mathcal{H}^x,\left\|\cdot\right\|_x\right)$  of functions $f:[0,T]\times \mathbb{R}\rightarrow \mathbb{R} $ with the norm
\begin{align}\label{eqsnormlocalta1}
 	\left\|f\right\|_x&:=2\Big(\int_0^1\int_{\mathbb{R}}f^2(s,z)\exp(-\frac{|z-x|^2}{2s})\frac{\diffns s \diff z}{\sqrt{2\pi s}}\Big)^{\frac{1}{2}}\notag
+\int_0^1\int_{\mathbb{R}}|z-x| |f(s,x)|\exp(-\frac{|z-x|^2}{2s})\frac{\diffns s \diff z}{s\sqrt{2\pi s}}.
\end{align}
See for example \cite{Ein2000}. Endowed with this normed,  $\left(\mathcal{H}^x,\left\|\cdot\right\|_x\right)$ is a Banach space. 
It follows from \cite[Lemma 2.7 and Definition 2.8]{BMBPD17} that the local time-space integral of $f\in \mathcal{H}^x$ with respect to $L^{X^x}(t,z)$ is well defined and we have 
\begin{align}\label{eqslocalt11}
	\int_0^t\int_{\mathbb{R}}f(s,z) L^{X^x}(\diffns s,\diffns z)=			\int_0^T\int_{\mathbb{R}}f(s,z) I_{[0,t]}(s)L^{X^x}(\diffns s,\diffns z).
\end{align}
Let us point out that as already observed in \cite[Remark 2.9]{BMBPD17},  functions $f:[0,T]\times \mathbb{R}\rightarrow \mathbb{R} $ of spacial linear growth uniformly in $t$ belong to $\mathcal{H}^x$ and thus the above local space-time integral exists for $x\in \mathbb{R}$. 
 						
We will also need the following representation which will play a key role in our argument (see for example \cite[Lemma 2.11]{BMBPD17}).
Let $f \in \mathcal{H}^x$ be Lipschitz continuous in space. Then for all $t\in [0,T]$, it holds	 
\begin{align}\label{eqtransLT1}
	\int_0^tf'(s,X^x_s)\diffns s=-\int_0^t\int_{\mathbb{R}}f(s,z) L^{X^x}(\diffns s,\diffns z).
\end{align}
Moreover, the local time-space integral of $f \in {\cal H}^0$ admits the decomposition (see the proof of \cite[Theorem 3.1]{Ein2000})
\begin{align}\label{eqslocalt211}
	\int_0^t\int_{\mathbb{R}}f(s,z) L^{B^x}(\diffns s,\diffns z)&=\int_0^t f (s,B^{x}_s)\diffns B_s+\int_{T-t}^T f (T-s,\widehat{B}^{x}_s)\diffns \widetilde{W}_s\notag
 	-\int_{T-t}^T f (T-s,\widehat{B}^x_s)\frac{\widehat{B}_{s}}{T-s}\diffns s,
\end{align}
$0\leq t\leq T$, a.s.,  $B^x$ is the Brownian motion started at $x$ and $\widehat{B}$ is the time-reversed Brownian motion, that is
\begin{align}\label{eqstimrevbm1}
	\widehat{B}_t:=B_{T-t},\,\,0\leq t\leq T.
\end{align}
Further, the process $\widetilde{W}_t,\,\,\,0\leq t\leq T$, is an independent Brownian motion with respect to the filtration $\mathcal{F}_t^{\widehat{B}}$ generated by $\widehat{B}_t$, and satisfies:
\begin{align}\label{eqstimrevbm2}
	\widetilde{W}_t= \widehat{B}_t-B_T+\int_t^T\frac{\widehat{B}_s}{T-s}\diffns s.
\end{align}
 						
We are now ready to give an explicit representation of the  Malliavin derivative of the unique strong solution to the SDE \eqref{eqmain1r} in terms of a local time integral.

\begin{theorem}\label{Thmexplimalder}
	Assume that the conditions of Theorem \ref{thmainres1r} are satisfied. Assume in addition that $b_2$ does not depend on $x$ and that
	\begin{equation}
	\label{eq: power moment b21}
		\sup_{0\le s\le T}E\left[\left(\int_0^T|D_sb_2(t,\omega+\varphi) |^2\diffns t \right)^2 \right]<\infty
	\end{equation}
for every		$\varphi \in C^1_b([0,T],\mathbb{R}^d)$.
	For every $0\leq s\leq t\leq T$, the $i$-th component of the Malliavin derivative of the unique strong solution to the SDE \eqref{eqmain1r} admits the following representation:
 	\begin{align}\label{mallexpliform}
 		D^i_tX_s^x=e^{\int_t^s\int_{\mathbb{R}} b_{1}(u,z) L^{X^x}(\diffns s,\diffns z) }\left(\int_t^sD^i_tb_2(u,\omega)e^{-\int_t^u\int_{\mathbb{R}} b_{1}(r,z) L^{X^x}(\diffns r,\diffns z)}\diffns u + \sigma_i \right),\quad i =1,\dots,d,
 	\end{align}
 	$L^{X^x}(\diffns s,\diffns z)$ is the integration with respect to the time-space local time of $X^x$. 
\end{theorem}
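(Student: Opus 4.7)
The strategy is to pass to the limit in the explicit representation \eqref{MalliavinDerivativeEquationr} established for the approximating solutions $X^{x,n}$, using the local time-space transformation \eqref{eqtransLT1} to recast the pathwise integrals of $b_{1,n}'$. The assumption that $b_2$ is independent of $x$ provides the essential simplification: $b_2'(u,\cdot,\omega)\equiv 0$, and $D^i_tb_2(u,\omega)$ does not depend on the solution path. Consequently, \eqref{MalliavinDerivativeEquationr} reduces to
$$
D^i_tX^{x,n}_s = e^{\int_t^s b_{1,n}'(u,X^{x,n}_u)\,\diffns u}\left(\int_t^s D^i_tb_2(u,\omega) e^{-\int_t^u b_{1,n}'(r,X^{x,n}_r)\,\diffns r}\,\diffns u + \sigma_i\right),
$$
and since $b_{1,n}$ is smooth with compact support (hence Lipschitz in space and in $\mathcal{H}^x$), formula \eqref{eqtransLT1} rewrites each time integral $\int_t^u b_{1,n}'(r,X^{x,n}_r)\,\diffns r$ as $-\int_t^u\int_{\mathbb{R}} b_{1,n}(r,z)\,L^{X^{x,n}}(\diffns r,\diffns z)$. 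Up to the sign convention for the local time integral, this produces the claimed representation with $(b_1,X^x)$ replaced by $(b_{1,n},X^{x,n})$.

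The core work is the limit $n\to\infty$. Strong $L^2$-convergence $X^{x,n}_s\to X^x_s$ from Proposition \ref{prop:convXn}, combined with the uniform $L^2$-bounds on $D^i_tX^{x,n}_s$ from Theorem \ref{thm:estim diff Mall X} and closability of the Malliavin operator, yield $D^i_tX^{x,n}_s\rightharpoonup D^i_tX^x_s$ weakly in $L^2(\Omega,P)$. It therefore suffices to show that the right-hand side of the pre-limit identity converges in $L^2$ to the right-hand side of \eqref{mallexpliform}; uniqueness of the weak limit then forces equality $P$-almost surely. I would split this into two parts: (i) convergence in probability of the local time-space integrals $\int_t^u\int b_{1,n}(r,z)\,L^{X^{x,n}}(\diffns r,\diffns z)\to \int_t^u\int b_1(r,z)\,L^{X^x}(\diffns r,\diffns z)$, and (ii) uniform exponential integrability of the resulting exponentials, so that Vitali's convergence theorem applies.

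For (ii), the uniform-in-$n$ exponential moment bounds established via Girsanov's theorem in the proof of Lemma \ref{lemmainres1r}, controlling $E[\exp\{c\int_t^s b_{1,n}'(u,X^{x,n}_u)\,\diffns u\}]$ in terms of $\|\tilde b_1\|_\infty$, $|x|^2$, and $b_2^{\mathrm{exp}}$, provide the required equi-integrability. For (i), I would use the decomposition \eqref{eqslocalt211} to re-express the local time integral along $X^{x,n}$ as a sum of forward and backward It\^o integrals together with a time-reversed drift compensator, and then reduce everything to Wiener space via the Girsanov change of measure employed in Lemma \ref{lem:weak conv weak sol}. The a.e. convergence $b_{1,n}\to b_1$ together with the uniform linear-growth bound $\|\tilde b_{1,n}\|_\infty\le \|\tilde b_1\|_\infty$, which ensures that both $b_{1,n}$ and $b_1$ lie in $\mathcal{H}^x$ with controlled norms, then allows a standard dominated convergence argument on the Gaussian base space.

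The main technical obstacle is precisely step (i): passing to the limit in both the integrand and the random local time measure simultaneously, for which the joint reduction to Wiener space via Girsanov together with the time-reversal decomposition \eqref{eqslocalt211} is essential. Once (i) and (ii) are combined through Vitali's theorem, and noting that $D^i_tb_2(u,\omega)$ is $n$-independent and square integrable by \eqref{eq: power moment b21}, the $L^2$-convergence of the right-hand sides is secured and identification with $D^i_tX^x_s$ follows.
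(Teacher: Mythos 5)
Your overall architecture coincides with the paper's: reduce \eqref{MalliavinDerivativeEquationr} using $b_2'\equiv 0$, rewrite the time integrals of $b_{1,n}'$ via \eqref{eqtransLT1}, obtain $D^i_tX^{x,n}_s \rightharpoonup D^i_tX^x_s$ from the uniform bounds and closability of $D$, and identify the weak limit with the limit of the right-hand sides. The divergence --- and the gap --- lies in how you pass to the limit on the right-hand side. You propose to prove \emph{strong} $L^2$ convergence via (i) convergence in probability of $\int_t^u\int_\mathbb{R} b_{1,n}(r,z)\,L^{X^{x,n}}(\diffns r,\diffns z)$ to $\int_t^u\int_\mathbb{R} b_1(r,z)\,L^{X^x}(\diffns r,\diffns z)$ and (ii) uniform exponential integrability, followed by Vitali. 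Step (i) is not justified by the tools you invoke. The Girsanov reduction ``to Wiener space'' converts \emph{expectations}: $E_P[F(X^{x,n})] = E_P[\mathcal{E}(\int u_n \diffns B)\,F(x+\sigma\cdot B)]$ with an $n$-dependent density. It does not identify the random variable $F(X^{x,n})$ with anything defined on a fixed Brownian path, so it cannot deliver convergence in probability under $P$ of the local-time integrals. Proving (i) directly would require convergence of the local times $L^{X^{x,n}}\to L^{X^x}$ in a suitable sense, which in turn needs far stronger (pathwise, uniform) convergence of $X^{x,n}$ to $X^x$ than the $L^2(\Omega,P)$ convergence at each fixed $t$ supplied by Proposition \ref{prop:convXn}. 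A smaller point in the same direction: the decomposition \eqref{eqslocalt211} is stated for the local time of Brownian motion, so it can only be applied \emph{after} the measure change, not to $L^{X^{x,n}}$ directly as you suggest.

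The paper avoids all of this by never asking for convergence of the right-hand side as random variables. It tests against the dense family $\{\mathcal{E}(\int_0^T\dot\varphi_r\diffns B_r):\varphi\in C^1_b([0,T],\mathbb{R}^d)\}$, applies the Cameron--Martin shift by $\varphi$ and Girsanov to replace both $L^{X^{x,n}}$ and $L^{X^x}$ by the local time $L^{\|\sigma\|B^x_\sigma}$ of a single, $n$-independent Brownian motion, so that only the integrand $b_{1,n}$ and the Dol\'eans densities carry the $n$-dependence (see \eqref{eqrefne1}); weak convergence of the right-hand side then suffices to identify the weak limit of $D^i_tX^{x,n}_s$, with the exponential moments controlled by Lemma \ref{lemmexpk}. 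This is also why the extra hypothesis \eqref{eq: power moment b21} is phrased for the shifted drift $D_sb_2(t,\omega+\varphi)$: it is exactly what is needed after the shift. To repair your argument, replace (i)--(ii) by this tested, weak-convergence-after-shift scheme; as written, step (i) is a genuine missing ingredient.
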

Before proving the above theorem we will need some auxiliary results. The next one generalizes \cite[Lemma A.2]{BMBPD17} to the case where the integrand is of spatial linear growth.  
\begin{remark}
	When the drift $b_2$ does not depend on $x$, the SDE \eqref{eq:sde into} can be solved without use of Malliavin calculus.
	In fact, by a change of measures, the SDE reduces to an equation with deterministic drift, driven by a new Brownian motion (solved e.g. in \cite{MeMo17}).
	Then, one can conclude using the fact that the filtration of the new Brownian motion is smaller than the initial filtration.
\end{remark}
\begin{example}
	Let $\alpha\in {\cal H}^4(\mathbb{R}^d)$ be such that $b_2(t,\omega):=\int_0^t\alpha_s\diffns B_s$ satisfies the moment condition \eqref{eq:expo moment b2} (which is automatically satisfied when $\alpha$ is deterministic), and $\alpha$ H\"older-continuous in $t$, and Malliavin differentiable with $(D_s\alpha_t)_t \in {\cal H}^4(\mathbb{R}^d)$.
	Then, it follows by \citet[Proposition 1.3.4]{Nua06} that $b_2$ is Malliavin differentiable, and
	\begin{equation}
		D_sb_2(t) = \alpha_s1_{\{s\le t \}} + \int_s^tD_s\alpha_r\diffns B_r.
	\end{equation}
	Thus, by Burkholder-Davis-Gundy inequality, it holds
	$$
		\sup_{0\le s\le T}E\left[\left(\int_0^T|D_sb_2(t,\omega)|^2\diffns t\right)^4 \right]\le \|\alpha\|_{{\cal H}^4(\mathbb{R}^d)} + \sup_{0\le s\le T}\|D_s\alpha \|^4_{{\cal H}^4(\mathbb{R}^d)}<\infty,
	$$
	which shows that $b^{\textrm{power}}_2<\infty$.
\end{example}

\begin{remark}
	Let us notice that using Cameron-Martin-Girsanov theorem, one can show that the bound \eqref{eq: power moment b21} holds if, for $\varepsilon >0$ small, we have
		\begin{equation}
		\sup_{0\le s\le T}E\left[\int_0^T|D_sb_2(t,\omega) |^{4+\varepsilon}\diffns t  \right]<\infty.
		\end{equation}
\end{remark}
\begin{lemma}\label{lemmexpk}
	Let $f:[0,T]\times \mathbb{R}\rightarrow \mathbb{R} $ be of spacial linear growth uniformly in $t$. Then for every $t\in[0,T], k\in \mathbb{R}$ and subset $K \subset \mathbb{R}$, it holds
 	$$
 		\sup_{x\in K}E\left[\exp\left(k\int_0^t\int_{\mathbb{R}}f(s,z)L^{B^x}(\diffns s,\diffns z)\right)\right]<\infty,
 	$$
 	provided that $T$ is small enough. In the above, $L^{B^x}(\diffns s,\diffns z)$ is the integration with respect to the time-space local time of $B^x$.
\end{lemma}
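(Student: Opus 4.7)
The approach is to combine the decomposition formula \eqref{eqslocalt211} with a triple Hölder split, then bound each resulting piece separately. Throughout I may assume $k>0$ (replace $f$ by $-f$) and that $K$ is bounded (otherwise the factor $1+|x|^2$ coming from linear growth makes the $\sup_{x\in K}$ infinite even for smooth $f$).

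First I would apply \eqref{eqslocalt211} to write
\begin{equation*}
	\int_0^t\int_\mathbb{R} f(s,z)\,L^{B^x}(\diffns s,\diffns z)=J_1(x)+J_2(x)-J_3(x),
\end{equation*}
with $J_1(x)=\int_0^t f(s,B_s^x)\diffns B_s$, $J_2(x)=\int_{T-t}^T f(T-s,\widehat B_s^x)\diffns\widetilde W_s$, and $J_3(x)=\int_{T-t}^T f(T-s,\widehat B_s^x)\widehat B_s/(T-s)\diffns s$. Hölder's inequality with three equal exponents reduces the claim to bounding $E[\exp(3kJ_i(x))]$ for $i=1,2$ and $E[\exp(-3kJ_3(x))]$ uniformly over $x\in K$ for $T$ small enough.

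For $J_1$, the Doléans-Dade identity $\exp(3kJ_1)=\mathcal{E}(3k\int f(s,B_s^x)\diffns B_s)_t\cdot\exp(\tfrac{9k^2}{2}\int_0^t f^2(s,B_s^x)\diffns s)$ combined with Cauchy-Schwarz reduces the task to checking $E[\mathcal{E}(6k\int f\diffns B)_t]=1$ (via Novikov/Beneš) and $E[\exp(9k^2\int_0^t f^2(s,B_s^x)\diffns s)]<\infty$. Using the spatial linear growth $|f(s,z)|^2\le C(1+|z|^2)$, the latter is dominated by $\exp(C_T T(1+|x|^2))\,E[\exp(C_T T\sup_{s\le T}|B_s|^2)]$, which is finite uniformly in $x\in K$ for $T$ small enough by Fernique's theorem (just as in the proof of Lemma \ref{lemmainres1r}). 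The bound for $J_2$ is obtained by the identical argument, since $\widetilde W$ is a Brownian motion with respect to a filtration to which $\widehat B^x$ is adapted.

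The main obstacle is $J_3$, where the singular kernel $1/(T-s)$ obstructs any Girsanov/Novikov approach. After the change of variables $u=T-s$, $J_3(x)=\int_0^t f(u,x+B_u)\,B_u/u\,\diffns u$, and linear growth gives
\begin{equation*}
	|J_3(x)|\le K(1+|x|)\int_0^t\frac{|B_u|}{u}\diffns u + K\int_0^t\frac{B_u^2}{u}\diffns u.
\end{equation*}
A Cauchy-Schwarz split then reduces matters to two scalar estimates. For the quadratic integral, Jensen's inequality applied to the convex map $y\mapsto e^{\lambda t y}$ yields $E[\exp(\lambda\int_0^t B_u^2/u\,\diffns u)]\le t^{-1}\int_0^t E[\exp(\lambda t B_u^2/u)]\diffns u=(1-2\lambda t)^{-1/2}$, since $B_u^2/u\sim\chi^2_1$ independently of $u$; this is finite once $\lambda t<1/2$. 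For the linear integral Jensen is too weak (the kernel $e^{\mu^2 t^2/(2u)}$ is not $u$-integrable), so I would instead exploit pathwise Hölder regularity: for any $\alpha\in(0,1/2)$, since $B_0=0$ one has $|B_u|\le\|B\|_{C^\alpha([0,T])}u^\alpha$, hence $\int_0^t|B_u|/u\,\diffns u\le\|B\|_{C^\alpha} t^\alpha/\alpha$. The Hölder seminorm of Brownian motion has Gaussian tails (Ciesielski-type estimate), so $E[\exp(\mu\|B\|_{C^\alpha} t^\alpha/\alpha)]<\infty$ for $T$ small; uniformity in $x\in K$ follows as $\mu=6kK(1+|x|)$ is bounded on $K$. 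The hard part is precisely this linear integral: the $1/u$ singularity forces one to go beyond Fernique and Jensen to the finer fact that Brownian paths are Hölder-$\alpha$ with Gaussian-tailed seminorm.
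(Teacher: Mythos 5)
Your proposal is correct and follows the same skeleton as the paper's proof: decompose the local-time integral via \eqref{eqslocalt211}, apply H\"older to separate the two stochastic-integral terms from the singular drift term, and handle the stochastic integrals by the Dol\'eans-Dade/Girsanov device plus an exponential moment of $\int_0^t f^2(s,B^x_s)\diffns s$ controlled through linear growth for $T$ small. Where you genuinely diverge is the treatment of the singular term $J_3=\int_0^t f(u,x+B_u)B_u/u\,\diffns u$. The paper bounds the quadratic piece $\int_0^t B_u^2/u\,\diffns u$ by expanding the exponential, computing $E|B_u|^{2n}=\frac{(2n)!}{2^n n!}u^n$ termwise, and invoking the ratio test, and it disposes of the linear piece $\int_0^t|B_u|/u\,\diffns u$ by citing \cite[Lemma A.2]{BMBPD17}. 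You instead get the quadratic piece from Jensen's inequality against the normalized measure $\diffns u/t$ together with the $\chi^2_1$ moment generating function, yielding the closed form $(1-2\lambda t)^{-1/2}$ for $\lambda t<1/2$, and you make the linear piece self-contained via the pathwise bound $|B_u|\le\|B\|_{C^\alpha}u^\alpha$ and the Gaussian tails of the H\"older seminorm. Both of your replacements are valid and arguably cleaner: the Jensen computation is sharper and shorter than the series expansion, and the H\"older-seminorm argument removes the external citation (and in fact works for all $T$, the smallness of $T$ being needed only for the quadratic piece and the Girsanov steps). Your observation that $K$ must be taken bounded for the supremum to be finite is also correct; the paper is silent on this point even though its own bounds carry a factor $\exp(C(1+|x|^2))$. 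The only cosmetic issue is that your single Cauchy--Schwarz on $\exp(3kJ_1)$ does not literally isolate $E[\mathcal{E}(6k\int f\diffns B)]$ and $E[\exp(9k^2\int f^2)]$ with those exact constants --- one needs the usual cascade of H\"older inequalities with slightly larger constants --- but this is exactly the looseness already present in the paper's own computation and does not affect correctness.
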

\begin{proof}
	It follows from \eqref{eqslocalt211} and the H\"older inequality that 
 	\begin{align}	
 		&E\left[\exp\left(k\int_0^t\int_{\mathbb{R}}f(s,z)L^{B^x}(\diffns s,\diffns z)\right)\right]\\
 		& \leq E\left[\exp\left(2k\int_0^tf(s,B^x_s)\diffns B^x_s\right)\right]^{\frac{1}{2}}
	 	\times E\left[\exp\left(4k\int_{T-t}^Tf(T-s,B^x_{T-s})\diffns \widetilde{W}^x_s\right)\right]^{\frac{1}{4}}\notag\\
 		&\quad\times	E\left[\exp\left(-4k\int_{T-t}^Tf(T-s,B^x_{T-s})\frac{B_{T-s}}{T-s}\diffns s\right)\right]^{\frac{1}{4}}
 		=I_1\times I_2\times I_3.
	\end{align}
 								
	Let us consider $I_1$. Using H\"older inequality, we have 
	\begin{align*}
		E\left[\exp\left(2k\int_0^tf(s,B^x_s)\diffns B^x_s\right)\right]\leq 	E\left[\mathcal{E}\left(4k\int_0^Tf(s,B^x_s)\diffns B^x_s\right)\right]^{\frac{1}{2}} E\left[\exp\left(8k^2\int_0^Tf^2(s,B^x_s)\diffns s\right)\right]^{\frac{1}{2}}.
	\end{align*}
	The Girsanov theorem applied to the martingale $2k\int_0^tf(s,B^x_s)\diffns B^x_s$ yields that the first term in \eqref{mallderI1r} is equal to one. 
	Similar arguments as in the proof of Lemma \ref{lemmainres1r} (i.e. using power series expansion of the exponential function) enables to conclude that the second term above is finite for small $T$. 
 									
	Next we wish to study the boundedness $I_3$. It was already shown in \cite[Lemma A.2]{BMBPD17} that 
	$$
 		E\left[\exp\left(k\int_{0}^T\frac{|B_{T-s}|}{T-s}\diffns s\right)\right]<\infty.
 	$$
 	Hence to show the boundedness of $I_3$, it suffices to show that
 	$$
 		E\left[\exp\left(k\int_{0}^T\frac{|B_{T-s}|^2}{T-s}\diffns s\right)\right]<\infty
 	$$
 	for $T$ small enough. Indeed, using exponential expansion, and the H\"older inequality, we have
 	\begin{align*}
 		E\left[\exp\left(k\int_{0}^T\frac{|B_{T-s}|^2}{T-s}\diffns s\right)\right]&=\sum_{n=1}^{\infty}\frac{1}{n!}E\left[\left(k\int_{0}^T\frac{|B_{T-s}|^2}{T-s}\diffns s\right)^n\right]\notag
 		\leq  \sum_{n=1}^{\infty}\frac{k^n}{n!}\int_{0}^T\frac{E|B_{T-s}|^{2n}}{(T-s)^n}\diffns s\times T^{n-1}\notag\\
 		= & \sum_{n=1}^{\infty}\frac{k^n}{n!}\frac{(2n)!}{2^nn!}T\times T^{n-1}\notag
 		=  \sum_{n=1}^{\infty}(Tk)^n\frac{(2n)!}{2^n(n!)^2}\notag.
 	\end{align*}
 	Using once more the ratio test, one deduces that the above sum is finite for small $T$. Combining arguments for the bounds of $I_1$ and $I_3 $ enables to conclude that $I_2$ is bounded as well.
\end{proof}
\begin{proof}[of Theorem \ref{Thmexplimalder}]
	Let $b_{1,n}$ be a sequence of smooth drifts approximating $b_1$.
	Then, using \eqref{MalliavinDerivativeEquationr} and \eqref{eqtransLT1}, we have  
 	\begin{align} \label{qede2c}
 		D^i_tX^{x,n}_s &= e^{-\int_t^s \int_{\mathbb{R}}b_{1,n}(u,z) L^{X^{x,n}}(\diffns u,\diffns z)} \left(\int_t^sD^i_tb_2(u,\omega)e^{\int_t^u  \int_{\mathbb{R}}b_{1,n}(r,z) L^{X^{x,n}}(\diffns r,\diffns z)}\diffns u + \sigma_i \right).
 	\end{align}
 	It follows from Corollary \ref{maincor} that $(X^{x,n}_{t})_{n \geq 1}$ is relatively compact in $L^2(\Omega,P)$ and $\|D_sX^{x,n}_t\|_{L^2(P\otimes \diffns t)}$ is uniformly bounded in $n$. Hence by \cite[Lemma 1.2.3]{Nua06}, $(D_sX^{x,n}_t)_{n \geq 1}$ converges weakly to $D_sX^{x}_t$ in $L^2(P;\mathbb{R})$. Thus, in order to conclude, we need to show that $$\left\{e^{-\int_t^s \int_{\mathbb{R}}b_{1,n}(u,z) L^{X^{x,n}}(\diffns u,\diffns z)} \left(\int_t^sD^i_tb_2(u,\omega)e^{\int_t^u  \int_{\mathbb{R}}b_{1,n}(r,z) L^{X^{x,n}}(\diffns r,\diffns z)}\diffns u + \sigma_i \right)\mathcal{E}\left(\int_0^T\dot \varphi_r\diffns B_r\right)\right\}_{n}$$ 
 	converges to 
 	$$e^{-\int_t^s \int_{\mathbb{R}}b_{1}(u,z) L^{X^{x}}(\diffns u,\diffns z)} \left(\int_t^sD^i_tb_2(u,\omega)e^{\int_t^u  \int_{\mathbb{R}}b_{1}(r,z) L^{X^{x}}(\diffns r,\diffns z)}\diffns u + \sigma_i \right)\mathcal{E}\left(\int_0^T\dot\varphi_r\diffns B_r\right)$$ 
 	in expectation for every $\varphi \in C^1_b([0,T],\mathbb{R}^d)$. 
 	We will only show that 
 	$$\left\{e^{-\int_t^s \int_{\mathbb{R}}b_{1,n}(u,z) L^{X^{x,n}}(\diffns u,\diffns z)} \int_t^sD^i_tb_2(u,\omega)e^{\int_t^u  \int_{\mathbb{R}}b_{1,n}(r,z) L^{X^{x,n}}(\diffns r,\diffns z)}\diffns u \mathcal{E}\left(\int_0^T\dot\varphi_r\diffns B_r\right)\right\}_{n}$$
 	 converges to $e^{-\int_t^s \int_{\mathbb{R}}b_{1}(u,z) L^{X^{x}}(\diffns u,\diffns z)} \int_t^sD^i_tb_2(u,\omega)e^{\int_t^u  \int_{\mathbb{R}}b_{1}(r,z) L^{X^{x}}(\diffns r,\diffns z)}\diffns u \mathcal{E}\Big(\int_0^T\dot\varphi_r\diffns B_r\Big)$ in expectation.
 	Using Girsanov theorem and the Cameron-Martin theorem, we have
	\begin{align}
 	L=	&	\Big|E\Big[\mathcal{E}\Big(\int_0^T\dot\varphi_r\diffns B_r\Big)
 		\Big\{e^{-\int_t^s \int_{\mathbb{R}}b_{1,n}(u,z) L^{X^{x,n}}(\diffns u,\diffns z)} \int_t^sD^i_tb_2(u,\omega)e^{\int_t^u  \int_{\mathbb{R}}b_{1,n}(r,z) L^{X^{x,n}}(\diffns r,\diffns z)}\diffns u\notag\\
 		&	-e^{-\int_t^s \int_{\mathbb{R}}b_{1}(u,z) L^{X^{x}}(\diffns u,\diffns z)} \int_t^sD^i_tb_2(u,\omega)e^{\int_t^u  \int_{\mathbb{R}}b_{1}(r,z) L^{X^{x}}(\diffns r,\diffns z)}\diffns u\Big\}
 		\Big]\Big|\notag\\
 	=&	\Big|E\Big[\mathcal{E}\Big(\int_0^T\dot\varphi_r\diffns B_r\Big)
 		\Big\{e^{-\int_t^s \int_{\mathbb{R}}b_{1,n}(u,z) L^{X^{x,n}}(\diffns u,\diffns z)} \int_t^sD^i_tb_2(u,\omega)e^{\int_t^u  \int_{\mathbb{R}}b_{1,n}(r,z) L^{X^{x,n}}(\diffns r,\diffns z)}\diffns u\notag\\
 		&	-e^{-\int_t^s \int_{\mathbb{R}}b_{1,n}(u,z) L^{X^{x,n}}(\diffns u,\diffns z)} \int_t^sD^i_tb_2(u,\omega)e^{\int_t^u  \int_{\mathbb{R}}b_{1}(r,z) L^{X^{x}}(\diffns r,\diffns z)}\diffns u\notag\\
 		& + e^{-\int_t^s \int_{\mathbb{R}}b_{1,n}(u,z) L^{X^{x,n}}(\diffns u,\diffns z)} \int_t^sD^i_tb_2(u,\omega)e^{\int_t^u  \int_{\mathbb{R}}b_{1}(r,z) L^{X^{x}}(\diffns r,\diffns z)}\diffns u\notag\\
 		&-e^{-\int_t^s \int_{\mathbb{R}}b_{1}(u,z) L^{X^{x}}(\diffns u,\diffns z)} \int_t^sD^i_tb_2(u,\omega)e^{\int_t^u  \int_{\mathbb{R}}b_{1}(r,z) L^{X^{x}}(\diffns r,\diffns z)}\diffns u\Big\}
 		\Big]\Big|\notag\\
 		=&	\Big|E\Big[\mathcal{E}\Big(\int_0^T\dot\varphi_r\diffns B_r\Big)
 		\Big\{e^{-\int_t^s \int_{\mathbb{R}}b_{1,n}(u,z) L^{X^{x,n}}(\diffns u,\diffns z)} \notag\\
 		&\times \int_t^sD^i_tb_2(u,\omega)\Big(e^{\int_t^u  \int_{\mathbb{R}}b_{1,n}(r,z) L^{X^{x,n}}(\diffns r,\diffns z)}-e^{\int_t^u  \int_{\mathbb{R}}b_{1}(r,z) L^{X^{x}}(\diffns r,\diffns z)}\Big)\diffns u\notag\\
 		& + \Big(e^{-\int_t^s \int_{\mathbb{R}}b_{1,n}(u,z) L^{X^{x,n}}(\diffns u,\diffns z)} -e^{-\int_t^s \int_{\mathbb{R}}b_{1}(u,z) L^{X^{x}}(\diffns u,\diffns z)}\Big)\int_t^sD^i_tb_2(u,\omega)e^{\int_t^u  \int_{\mathbb{R}}b_{1}(r,z) L^{X^{x}}(\diffns r,\diffns z)}\diffns u\Big\}
 		\Big]\Big|\notag\\
 				=&	\Big|E\Big[
 				e^{-\int_t^s \int_{\mathbb{R}}b_{1,n}(u,z) L^{\tilde{X}^{x,n}}(\diffns u,\diffns z)} \notag\\
 				&\times \int_t^sD^i_tb_2(u,\omega+\varphi)\Big(e^{\int_t^u  \int_{\mathbb{R}}b_{1,n}(r,z) L^{\tilde{X}^{x,n}}(\diffns r,\diffns z)}-e^{\int_t^u  \int_{\mathbb{R}}b_{1}(r,z) L^{\tilde{X}^{x}}(\diffns r,\diffns z)}\Big)\diffns u\notag\\
 				& + \Big(e^{-\int_t^s \int_{\mathbb{R}}b_{1,n}(u,z) L^{\tilde{X}^{x,n}}(\diffns u,\diffns z)} -e^{-\int_t^s \int_{\mathbb{R}}b_{1}(u,z) L^{\tilde{X}^{x}}(\diffns u,\diffns z)}\Big)\int_t^sD^i_tb_2(u,\omega+\varphi)e^{\int_t^u  \int_{\mathbb{R}}b_{1}(r,z) L^{\tilde{X}^{x}}(\diffns r,\diffns z)}\diffns u
 				\Big]\Big|\notag\\
 				\leq & I_{1,n}+I_{2,n}.
	\end{align}
 			Let us concentrate on $ I_{1,n}$.	Repeated use of H\"older inequality, Girsanov transform, the bound on $D^i_tb_2(u,\omega+\varphi)$ and the fact that $|e^x-1|\leq |x|(e^x+1)$ gives 
 			\begin{align}
 			I_{1,n}\leq &\Big|E\Big[
 			e^{-2\int_t^s \int_{\mathbb{R}}b_{1,n}(u,z) L^{\tilde{X}^{x,n}}(\diffns u,\diffns z)} \Big]^{1/2}\times E\Big[\Big(\int_t^s(D^i_tb_2(u,\omega+\varphi))^2\diffns u\Big)^2\Big]^{1/4}\notag\\
 			&\times E\Big[ \Big(\int_t^s\Big(e^{\int_t^u  \int_{\mathbb{R}}b_{1,n}(r,z) L^{\tilde{X}^{x,n}}(\diffns r,\diffns z)}-e^{\int_t^u  \int_{\mathbb{R}}b_{1}(r,z) L^{\tilde{X}^{x}}(\diffns r,\diffns z)}\Big)^2\diffns u\Big)^2\Big]^{1/4}\Big|\notag\\
\leq & C E\Big[\mathcal{E}\Big(\int_0^T\Big\{\tilde u_n(r,\omega,x+\sigma \cdot B_r)+\dot\varphi_r\Big\}\diffns B_r \Big)e^{-2\int_t^s  \int_{\mathbb{R}}b_{1,n}(r,z) L^{\|\sigma\|B_\sigma^{x}}(\diffns r,\diffns z)}\Big]^{1/2}\notag\\
&\times  \Big(\int_t^sE\Big[\Big(e^{\int_t^u  \int_{\mathbb{R}}b_{1,n}(r,z) L^{\tilde{X}^{x,n}}(\diffns r,\diffns z)}-e^{\int_t^u  \int_{\mathbb{R}}b_{1}(r,z) L^{\tilde{X}^{x}}(\diffns r,\diffns z)}\Big)^4\Big]\diffns u\Big)^{1/4}\notag\\
\leq& C E\Big[\mathcal{E}\Big(\int_0^T\Big\{\tilde u_n(r,\omega,x+\sigma \cdot B_r)+\dot\varphi_r\Big\}\diffns B_r \Big)^2\Big]^{1/4}E\Big[e^{-4\int_t^s  \int_{\mathbb{R}}b_{1,n}(r,z) L^{\|\sigma\|B_\sigma^{x}}(\diffns r,\diffns z)}\Big]^{1/4}\notag\\
&\times  \Big(\int_t^sE\Big[\Big|e^{\int_t^u  \int_{\mathbb{R}}b_{1,n}(r,z) L^{\tilde{X}^{x,n}}(\diffns r,\diffns z)}-e^{\int_t^u  \int_{\mathbb{R}}b_{1}(r,z) L^{\tilde{X}^{x}}(\diffns r,\diffns z)}\Big|^{1/2}\Big]^{1/2}\notag\\
& \times E\Big[\Big|e^{\int_t^u  \int_{\mathbb{R}}b_{1,n}(r,z) L^{\tilde{X}^{x,n}}(\diffns r,\diffns z)}-e^{\int_t^u  \int_{\mathbb{R}}b_{1}(r,z) L^{\tilde{X}^{x}}(\diffns r,\diffns z)}\Big|^{15}\Big]^{1/2}\diffns u\Big)^{1/4}\notag\\
\leq & C E\Big[\mathcal{E}\Big(\int_0^T\Big\{\tilde u_n(r,\omega,x+\sigma \cdot B_r)+\dot\varphi_r\Big\}\diffns B_r \Big)^2\Big]^{1/4}E\Big[e^{-4\int_t^s  \int_{\mathbb{R}}b_{1,n}(r,z) L^{\|\sigma\|B_\sigma^{x}}(\diffns r,\diffns z)}\Big]^{1/4}\notag\\
&\times  \Big(\int_t^sE\Big[\Big|e^{\int_t^u  \int_{\mathbb{R}}b_{1,n}(r,z) L^{\tilde{X}^{x,n}}(\diffns r,\diffns z)}-e^{\int_t^u  \int_{\mathbb{R}}b_{1}(r,z) L^{\tilde{X}^{x}}(\diffns r,\diffns z)}\Big|^{1/2}\Big]^{1/2}\notag\\
& \times E\Big[e^{15\int_t^u  \int_{\mathbb{R}}b_{1,n}(r,z) L^{\tilde{X}^{x,n}}(\diffns r,\diffns z)}+e^{15\int_t^u  \int_{\mathbb{R}}b_{1}(r,z) L^{\tilde{X}^{x}}(\diffns r,\diffns z)}\Big]^{1/2}\diffns u\Big)^{1/4}\notag\\
	\leq & C E\Big[\mathcal{E}\Big(\int_0^T\Big\{\tilde u_n(r,\omega,x+\sigma \cdot B_r)+\dot\varphi_r\Big\}\diffns B_r \Big)^2\Big]^{1/4}E\Big[e^{-4\int_t^s  \int_{\mathbb{R}}b_{1,n}(r,z) L^{\|\sigma\|B_\sigma^{x}}(\diffns r,\diffns z)}\Big]^{1/4}\notag\\
	&\times  \Big(\int_t^sE\Big[\Big|\int_t^u  \int_{\mathbb{R}}b_{1,n}(r,z) L^{\tilde{X}^{x,n}}(\diffns r,\diffns z)-\int_t^u  \int_{\mathbb{R}}b_{1}(r,z) L^{\tilde{X}^{x}}(\diffns r,\diffns z)\Big|^{1/2}\notag\\
	&\times \Big|e^{\int_t^u  \int_{\mathbb{R}}b_{1,n}(r,z) L^{\tilde{X}^{x,n}}(\diffns r,\diffns z)+\int_t^u  \int_{\mathbb{R}}b_{1}(r,z) L^{\tilde{X}^{x}}(\diffns r,\diffns z)}+1\Big|^{1/2}
\Big]^{1/2}\notag\\
	& \times E\Big[e^{15\int_t^u  \int_{\mathbb{R}}b_{1,n}(r,z) L^{\tilde{X}^{x,n}}(\diffns r,\diffns z)}+e^{15\int_t^u  \int_{\mathbb{R}}b_{1}(r,z) L^{\tilde{X}^{x}}(\diffns r,\diffns z)}\Big]^{1/2}\diffns u\Big)^{1/4}\notag
	\end{align}
	\begin{align}
	&\leq  C E\Big[\mathcal{E}\Big(\int_0^T\Big\{\tilde u_n(r,\omega,x+\sigma \cdot B_r)+\dot\varphi_r\Big\}\diffns B_r \Big)^2\Big]^{1/4}E\Big[e^{-4\int_t^s  \int_{\mathbb{R}}b_{1,n}(r,z) L^{\|\sigma\|B_\sigma^{x}}(\diffns r,\diffns z)}\Big]^{1/4}\notag\\
	&\quad \times  \Big(\int_t^sE\Big[\Big|\int_t^u  \int_{\mathbb{R}}b_{1,n}(r,z) L^{\tilde{X}^{x,n}}(\diffns r,\diffns z)-\int_t^u  \int_{\mathbb{R}}b_{1}(r,z) L^{\tilde{X}^{x}}(\diffns r,\diffns z)\Big|\Big]^{1/4}\notag\\
	&\quad\times E\Big[\Big|e^{\int_t^u  \int_{\mathbb{R}}b_{1,n}(r,z) L^{\tilde{X}^{x,n}}(\diffns r,\diffns z)+\int_t^u  \int_{\mathbb{R}}b_{1}(r,z) L^{\tilde{X}^{x}}(\diffns r,\diffns z)}+1\Big|
\Big]^{1/4}\notag\\
	&\quad\times E\Big[e^{15\int_t^u  \int_{\mathbb{R}}b_{1,n}(r,z) L^{\tilde{X}^{x,n}}(\diffns r,\diffns z)}+e^{15\int_t^u  \int_{\mathbb{R}}b_{1}(r,z) L^{\tilde{X}^{x}}(\diffns r,\diffns z)}\Big]^{1/2}\diffns u\Big)^{1/4}\notag.
\end{align}
 	In the above $B^\sigma:=\sum_{i=1}^n\frac{\sigma_i}{\|\sigma\|}B^i$ is a standard Brownian motion.
 					Using Cauchy-Schwartz inequality, the Novikov's condition on $b_2$ and Bene\v{s} Theorem, the first term is finite  for small time $T$. Using Lemma \ref{lemmexpk} and \cite[Proposition 2.1.1]{Ein2006} enables to conclude that the second term is bounded. Using once more Cauchy-Schwartz inequality, Girsanov transform and Lemma \ref{lemmexpk}, one deduces that the fourth and fifth terms are bounded for small time $T$. Let use now focus on the third term. By Girsanov transform and Cauchy-Schwartz inequality, we have 
 	\begin{align}\label{eqrefne1}
 		& E\Big[\int_t^u  \int_{\mathbb{R}}b_{1,n}(r,z) L^{\tilde{X}^{x,n}}(\diffns r,\diffns z)-\int_t^u  \int_{\mathbb{R}}b_{1}(r,z) L^{\tilde{X}^{x}}(\diffns r,\diffns z)\Big]\notag\\
 		&= E\Big[\mathcal{E}\Big(\int_0^T\Big\{\tilde u_n(r,\omega,x+\sigma \cdot B_r)+\dot\varphi_r\Big\}\diffns B_r \Big)\int_t^u  \int_{\mathbb{R}}b_{1,n}(r,z) L^{\|\sigma\|B_\sigma^{x}}(\diffns r,\diffns z)\notag\\
		& \quad-\mathcal{E}\Big(\int_0^T\Big\{\tilde u(r,\omega,x+\sigma \cdot B_r)+\dot\varphi_r\Big\}\diffns B_r \Big)\int_t^u  \int_{\mathbb{R}}b_{1}(r,z) L^{\|\sigma\|B_\sigma^{x}}(\diffns r,\diffns z)\Big] \notag\\
 			&= E\Big[\mathcal{E}\Big(\int_0^T\Big\{\tilde u_n(r,\omega,x+\sigma \cdot B_r)+\dot\varphi_r\Big\}\diffns B_r \Big)\int_t^u  \int_{\mathbb{R}}\Big(b_{1,n}(r,z) -b_{1}(r,z)\Big)L^{\|\sigma\|B_\sigma^{x}}(\diffns r,\diffns z)\notag\\
 			&\quad+ \Big\{\mathcal{E}\Big(\int_0^T\Big\{\tilde u_n(r,\omega,x+\sigma \cdot B_r)+\dot\varphi_r\Big\}\diffns B_r \Big)-\mathcal{E}\Big(\int_0^T\Big\{\tilde u(r,\omega,x+\sigma \cdot B_r)+\dot\varphi_r\Big\}\diffns B_r \Big)\Big\}\int_t^u  \int_{\mathbb{R}}b_{1}(r,z) L^{\|\sigma\|B_\sigma^{x}}(\diffns r,\diffns z)\Big]\notag\\
 			&\leq   E\Big[\mathcal{E}\Big(\int_0^T\Big\{\tilde u_n(r,\omega,x+\sigma \cdot B_r)+\dot\varphi_r\Big\}\diffns B_r \Big)^2\Big]^{1/2}E\Big[\Big(\int_t^u  \int_{\mathbb{R}}\Big(b_{1,n}(r,z) -b_{1}(r,z)\Big)L^{\|\sigma\|B_\sigma^{x}}(\diffns r,\diffns z)\Big)^2\Big]^{1/2}\notag\\
 			&\quad  + E\Big[\Big\{\mathcal{E}\Big(\int_0^T\Big\{\tilde u_n(r,\omega,x+\sigma \cdot B_r)+\dot\varphi_r\Big\}\diffns B_r \Big)-\mathcal{E}\Big(\int_0^T\Big\{\tilde u(r,\omega,x+\sigma \cdot B_r)+\dot\varphi_r\Big\}\diffns B_r \Big)\Big\}^2\Big]^{1/2}\notag\\
 		&\quad	\times  E\Big[\Big(\int_t^u  \int_{\mathbb{R}}b_{1}(r,z) L^{\|\sigma\|B_\sigma^{x}}(\diffns r,\diffns z)\Big)^2\Big]^{1/2}.
 			 \end{align}
	Using  Lemma \ref{lemmexpk}, the first and the last terms on the right of \eqref{eqrefne1} is bounded. Using \eqref{eqslocalt211}, Minkowski, Doob maximal inequality and the dominated convergence theorem the second term converges to $0$. Similar reasoning as in the proof of Lemma \ref{lem:weak conv weak sol} enable to conclude that the third term converges to $0$. Thus the result follows.	
\end{proof}

\section{Stochastic differentiable flow for SDEs with random drifts}
The aim of this section is to prove existence of a Sobolev differentiable stochastic flow for the SDE \eqref{eq:sde into} with (non-Markovian) random drifts.
Due to the additive decomposition assumption $b (t,\omega,x) = b_1(t,x) + b_2(t, \omega)$, the analysis of the flow turns out to be much easier than that of the Malliavin derivative considered above.
Most of the result of this section will follow as adaptations of some arguments of \citet{MNP2015}.

Throughout this section, we denote $b_n: = b_{1,n} + b_2$ where $b_{1,n}:[0,T]\times \mathbb{R}\to \mathbb{R}$, is the sequence of smooth functions with compact support approximating $b_1$ as introduced in the proof of Theorem \ref{thmainres1r}, see Step 3.2.2.
We further denote by $X^{s,x,n}$ the unique solution of the SDE associated to $b_n$ with initial condition $X^{s,x,n}_s = x$.
That is,
\begin{equation*}
	X^{s,x,n}_t = x + \int_s^tb_{n}(u, X^{s,x,n}_u,\omega)\diffns u + \sigma\cdot B_t.
\end{equation*}
We first prove differentiability of the solution $X^{s,x,n}$, and derive a uniform (in $n$) bound on its deriviative.
\begin{proposition}
\label{pro:bound derivative}
	Let $p\ge1$ and $(s,x) \in [0,T]\times \mathbb{R}$ be fixed.
	If $T$ is small enough, then for every $t \in [0,T]$, almost every trajectories of $x \mapsto X^{s,x,n}_t$ is differentiable and it holds
	\begin{align}\label{eqapprosobder1}
		E\left[|\partial_x X^{s,x,n}_t|^p \right] &\le \tilde{C}_{p,T,\sigma,\|\tilde{b}\|_\infty} \exp\left(C_{p,T,\sigma,\|\tilde{b}\|_\infty}|x|^{2}\right)
	\end{align}
	for some positive constants $\tilde{C}_{p,T,\sigma,\|\tilde{b}\|_\infty}$ and $C_{p,T,\sigma,\|\tilde{b}\|_\infty}$ depending on $p,T,\sigma$ and $\|\tilde{b}\|_\infty$.
\end{proposition}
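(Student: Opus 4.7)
The plan is to exploit the fact that for each $n$, the drift $b_n = b_{1,n} + b_2$ is $C^1$ in $x$ with Lipschitz bound $|b_{1,n}'(t,\cdot)| + M_2(\omega)$, so a pathwise application of the classical Picard theory (the equation being driven by the absolutely continuous process $t \mapsto x + \sigma \cdot B_t + \int_s^t b_n(u, \cdot, \omega)\diffns u$) yields that $x \mapsto X^{s,x,n}_t$ is a.s.\ continuously differentiable. Differentiating the SDE in $x$, the derivative $Y^{s,x,n}_t := \partial_x X^{s,x,n}_t$ solves the linear equation
\begin{equation*}
    Y^{s,x,n}_t = 1 + \int_s^t \bigl( b_{1,n}'(u, X^{s,x,n}_u) + b_2'(u, X^{s,x,n}_u, \omega) \bigr) Y^{s,x,n}_u \diffns u,
\end{equation*}
which I would integrate explicitly as
\begin{equation*}
    Y^{s,x,n}_t = \exp\biggl( \int_s^t b_{1,n}'(u, X^{s,x,n}_u) \diffns u + \int_s^t b_2'(u, X^{s,x,n}_u, \omega) \diffns u \biggr),
\end{equation*}
reducing the proof to bounding the $p$-th moment of this exponential.

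My next step would be to apply H\"older's inequality to decouple the two contributions, obtaining
\begin{equation*}
    E\bigl[|Y^{s,x,n}_t|^p\bigr] \le E\bigl[ e^{2p \int_s^t b_{1,n}'(u, X^{s,x,n}_u) \diffns u} \bigr]^{1/2} E\bigl[ e^{2p \int_s^t b_2'(u, X^{s,x,n}_u, \omega) \diffns u} \bigr]^{1/2}.
\end{equation*}
For the $b_2'$-factor, assumption \ref{a1} gives $|b_2'| \le M_2(\omega)$, so it is controlled by $E[\exp(2pT M_2(\omega))]^{1/2}$, which is finite for $T$ small enough by \eqref{eq:expo moment b2}. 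For the $b_{1,n}'$-factor I would mimic the strategy already used in the proof of Lemma \ref{lemmainres1r}: apply Girsanov (via Lemma \ref{bengen}) to replace $X^{s,x,n}_u$ by $x + \sigma \cdot (B_u - B_s)$ at the cost of a Dol\'eans--Dade exponential, split off that Dol\'eans factor by a further H\"older step (its moments being controlled exactly as in the bound for $I_{1,2}$ of \eqref{mallderI1r}, uniformly in $n$ and for $T \le T_1$), then expand the remaining factor as a power series and estimate each term via Proposition \ref{propmainEstimate}.

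The main obstacle is to secure estimates that are \emph{uniform in} $n$: since $b_{1,n}$ is only of linear growth, its derivative $b_{1,n}'$ cannot be bounded uniformly in $n$, so no naive sup-norm argument works. This is precisely where Proposition \ref{propmainEstimate} intervenes -- through the space-time local time representation \eqref{eqtransLT1} combined with a Gaussian tube estimate, it yields a bound of the form $C_{\sigma,k}^q(1+|x|)^{2q}\sqrt{q!}\,(t-s)^{q/2}$ depending only on $\|\tilde b_1\|_\infty$ and not on $n$ or on $\|b_{1,n}'\|_\infty$. Summing the resulting series in $q$, whose convergence requires $T$ to be sufficiently small in the spirit of the smallness condition \eqref{eq:cond small time 1}, produces a bound of exactly the stated form $\tilde C_{p,T,\sigma,\|\tilde b\|_\infty} \exp(C_{p,T,\sigma,\|\tilde b\|_\infty}|x|^2)$, with the quadratic factor $|x|^2$ arising from the Gaussian-type estimate on $e^{C T k^2(1+|x|)^2}$ that already appears throughout Section 2.2.
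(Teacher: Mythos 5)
Your proposal is correct and follows essentially the same route as the paper's proof: explicit exponential solution of the linearized equation, H\"older to decouple the $b_{1,n}'$ and $b_2'$ contributions, the $M_2$ bound for the latter, and Girsanov plus a power-series expansion estimated via Proposition \ref{propmainEstimate} (uniformly in $n$, for $T$ small) for the former. The only cosmetic difference is that the paper attributes the a.s.\ differentiability of $x\mapsto X^{s,x,n}_t$ to Kunita's flow theory rather than to a pathwise Picard argument.
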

\begin{proof}
	The differentiability of the trajectories of $x \mapsto X^{s,x,n}_t$ follows from the seminal work \cite{Kun90}, from which we further obtain that $\partial_xX^{s,x,n}_u:= \frac {\partial }{\partial x}X^{s,x,n}_t$ satisfies
	\begin{equation}
		\partial_xX^{s,x,n}_t = 1 + \int_s^t\left(b'_{1,n}(u, X^{s,x,n}_u) + b_2'(u, X^{s,x,n}_u,\omega)\right)\partial_xX^{s,x,n}_u\diffns u,
	\end{equation}
	where $b'_{1,n} = \frac{\partial}{\partial x}b_{1,n}$.
	The solution of this (random) ODE can be explicitly given by
	\begin{equation*}
		\partial_xX^{s,x,n}_t = \exp\left(\int_s^tb_{1,n}'(u, X^{s,x,n}_u) + b_2'(u, X^{s,x,n}_u,\omega) \diffns u \right).
	\end{equation*}
	Thus, Girsanov's theorem and successive applications of H\"older's inequality give (recall the definition of $u_n$ given in \eqref{eqgirs1 n})
	\begin{align*}
		E\left[|\partial_x X^{s,x,n}_t|^p \right] &\le E\left[\exp\left(\int_s^t2pb'_{1,n}(u,X^{s,x,n}_u)\diffns u\right) \right]^{1/2}E\left[\exp\left(\int_s^t 2pb_2'(u, X^{s,x,n}_u,\omega)\diffns u\right) \right]^{1/2}\\
		 &\le E\left[e^{2pTM_2} \right]^{1/2}E\left[ {\cal E}\left(\int_0^tu_{n}(u, x + \sigma\cdot B_u ,\omega)\diffns B_u \right)\exp\left(2p\int_s^t b'_{1,n}(u, x + \sigma\cdot B_u)\right) \right]^{1/2}\\
		&\le CE\left[{\cal E}\left(4\int_0^Tu_n(u, x + \sigma \cdot B_u, \omega)\diffns B_u \right) \right]^{1/8}
		E\left[ \exp\left(6\int_0^T|u_n(u, x+ \sigma\cdot B_u, \omega)|^2\diffns u\right) \right]^{1/8}\\
		&\quad \times E\left[\exp\left(4p\int_s^tb'_{1,n}(u, x  +\sigma\cdot B_u)\diffns u \right) \right]^{1/4} =: I_1^n\times I^n_2\times I^n_3.
	\end{align*}
	Since $b_{1,n}$ is of linear growth and $b_2$ square integrable, it holds $I^n_1 = 1$.
	As shown in the proof of Lemma \ref{lemmainres1r}, if $T$ is small enough, the sequence $I_2^n$ is bounded, and as shown in \eqref{eqapenexpobound3}, we have
	\begin{align*}
		I^n_3 &=\sum_{q=1}^\infty\frac{E\left[|\int_s^t(4p)b'_{1,n}(u, x+\sigma\cdot B_u)\diffns u|^q \right]}{q!}
		\leq  \sum_{q=1}^\infty\frac{(4p)^{q}E\left[|\int_s^tb'_{1,n}(u, x+\sigma\cdot B_u)\diffns u|^{2q} \right]^{\frac{1}{2}}}{q!}\\
		&\leq   \sum_{q=1}^\infty\frac{(4p)^{q}(C_\sigma)^q(1+|x|^{2q})^{1/2}\|\tilde{b}\|_\infty^q|t-s|^q(q!)^{1/2}}{q!}
		\leq   \sum_{q=1}^\infty\frac{2^q(4p)^{q}(C_\sigma)^q(1+|x|^{q})\|\tilde{b}\|_\infty^q|t-s|^q}{2^q(q!)^{1/2}}\\
		&\leq  \left(\sum_{q=1}^\infty\frac{2^{2q}(4p)^{2q}(C_\sigma)^{2q}(1+|x|^{q})^{2}\|\tilde{b}\|_\infty^{2q}|t-s|^{2q}}{q!}\right)^{\frac{1}{2}} \left(\sum_{q=1}^\infty\frac{1}{2^{2q}}\right)^{\frac{1}{2}}\\
		&\leq 2\left(\exp\left(16p^2(C_\sigma)^{2}\|\tilde{b}\|_\infty^{2}|t-s|^2(1+|x|^{2})\right)\right)^{\frac{1}{2}}\\
		&\leq 2\exp\left(p^2C_{\sigma,\|\tilde{b}\|_\infty^{2}}T(1+|x|^{2})\right).
	\end{align*}
	This concludes the proof that is there exist positive constants $\tilde{C}_{p,T,\sigma,\|\tilde{b}\|_\infty}$ and $C_{p,T,\sigma,\|\tilde{b}\|_\infty}$ depending on $p,T,\sigma$ and $\|\tilde{b}\|_\infty$ such that 
\begin{align*}
	E\left[|\partial_x X^{s,x,n}_t|^p \right] &\le \tilde{C}_{p,T,\sigma,\|\tilde{b}\|_\infty} \exp\left(C_{p,T,\sigma,\|\tilde{b}\|_\infty}|x|^{2}\right).
\end{align*}
	
\end{proof}
\begin{corollary}
	Let $p\ge 2$. 
	If $T$ is small enough, and $M_2$ in \ref{a2} is bounded, then it holds
	\begin{equation*}
		E\left[ |X^{s_1, x_1}_t - X^{s_2, x_2}_t |^p\right]\le {\cal C}_p(||\tilde b_1||_\infty,|x|^2, T)\left(|s_1 - s_2|^{p/2} + |x_1 - x_2|^{p} \right)
	\end{equation*}
	for every $s_1, s_2, t \in [0,T]$, $x_1, x_2 \in \mathbb{R}$ and for some continuous function ${\cal C}_p$ increasing in each component.

	In particular, for every $t\in [0,T]$ almost every trajectories\footnote{We use the convention $X^{s,x}_t = x$ whenever $t\le s$.} of $(s,x)\mapsto X^{s,x}_t$ is $\alpha$-H\"older continuous with $\alpha <1/2$ in $s$ and $\alpha <1$ in $x$.
\end{corollary}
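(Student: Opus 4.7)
The strategy is to first derive the two-point moment estimate for the smooth approximating sequence $X^{s,x,n}$ with bounds independent of $n$, pass to the limit $n\to\infty$, and then deduce H\"older regularity from Kolmogorov's continuity theorem. Boundedness of $M_2$ allows me to treat $b_2$ and $b_2'$ as uniformly bounded random fields throughout.

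For the spatial increment, fix $s\le t$ in $[0,T]$. Since $b_n$ is smooth, $x\mapsto X^{s,x,n}_t$ is differentiable and the mean value theorem gives
\[
X^{s,x_1,n}_t-X^{s,x_2,n}_t=(x_1-x_2)\int_0^1\partial_x X^{s,x_\theta,n}_t\,\diffns\theta,\qquad x_\theta:=x_2+\theta(x_1-x_2).
\]
Jensen's inequality, Fubini's theorem, and Proposition \ref{pro:bound derivative} then yield
\[
E\bigl[|X^{s,x_1,n}_t-X^{s,x_2,n}_t|^p\bigr]\le |x_1-x_2|^p\int_0^1 E\bigl[|\partial_x X^{s,x_\theta,n}_t|^p\bigr]\diffns\theta\le\tilde{C}|x_1-x_2|^p\exp\bigl(C(|x_1|^2\vee|x_2|^2)\bigr),
\]
with constants independent of $n$.

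For the temporal increment, assume $s_1\le s_2\le t$ and set $\Delta_u:=X^{s_1,x,n}_u-X^{s_2,x,n}_u$. Subtracting the two integral equations and expanding the drift difference via the mean value theorem produces the linear random ODE
\[
\Delta_t=\Delta_{s_2}+\int_{s_2}^t\zeta_u\Delta_u\,\diffns u,\qquad \zeta_u:=\int_0^1 b_n'\bigl(u,X^{s_2,x,n}_u+\theta\Delta_u,\omega\bigr)\,\diffns\theta,
\]
with $\Delta_{s_2}=\int_{s_1}^{s_2} b_n(u,X^{s_1,x,n}_u,\omega)\,\diffns u+\sigma\cdot(B_{s_2}-B_{s_1})$. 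Solving pathwise gives $\Delta_t=\Delta_{s_2}\exp(\int_{s_2}^t\zeta_u\,\diffns u)$, and Cauchy-Schwarz yields
\[
E[|\Delta_t|^p]\le E[|\Delta_{s_2}|^{2p}]^{1/2}\,E\Bigl[\exp\Bigl(2p\int_{s_2}^t\zeta_u\,\diffns u\Bigr)\Bigr]^{1/2}.
\]
The first factor is of order $|s_2-s_1|^{p/2}$ by the linear growth of $b_{1,n}$, boundedness of $b_2$, the standard moment estimate $\sup_n E[\sup_u|X^{s_1,x,n}_u|^{2p}]<\infty$, and $E|B_{s_2}-B_{s_1}|^{2p}=C|s_2-s_1|^p$. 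For the second factor I split $b_n'=b_{1,n}'+b_2'$: the $b_2'$ term contributes the finite factor $e^{2pM_2T}$, and for $b_{1,n}'$ Jensen's inequality on the $\theta$-integral reduces matters to bounding $\int_0^1E[\exp(2p\int_{s_2}^tb_{1,n}'(u,\xi_\theta^u)\,\diffns u)]\diffns\theta$ uniformly in $n$; this is achieved by the same Girsanov transform and power-series expansion as in the proof of Proposition \ref{pro:bound derivative}, with Lemma \ref{lemmaproexpr1} applied to absorb the random argument $\xi_\theta^u$.

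Combining the two estimates via the triangle inequality yields the required bound for $X^{s,x,n}$ uniformly in $n$. Since $X^{s,x,n}\to X^{s,x}$ strongly in $L^2(P)$ by Proposition \ref{prop:convXn} and higher moments are uniformly bounded, Fatou's lemma applied along a convergent subsequence transfers the estimate to the limit. The H\"older regularity of trajectories then follows from Kolmogorov's continuity theorem: applied to $s\mapsto X^{s,x}_t$ it gives $\alpha$-H\"older continuity in $s$ for any $\alpha<1/2-1/p$, and applied to $x\mapsto X^{s,x}_t$ it gives $\alpha$-H\"older continuity in $x$ for any $\alpha<1-1/p$; letting $p$ be arbitrarily large completes the proof. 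The main technical obstacle is the uniform-in-$n$ control of the exponential factor involving $\zeta_u$, since the integrand is evaluated along a random convex combination of two approximating trajectories rather than a single Brownian path; reducing this to the Girsanov framework of Proposition \ref{pro:bound derivative} is the delicate ingredient.
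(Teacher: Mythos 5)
Your spatial estimate is fine and in fact cleaner than the paper's: the mean value theorem applied directly to the flow $x\mapsto X^{s,x,n}_t$ together with Proposition \ref{pro:bound derivative} gives the $|x_1-x_2|^p$ bound, and the passage to the limit by lower semicontinuity of $K\mapsto E[|K|^p]$ under weak $L^2$ convergence is exactly what the paper does. The temporal estimate, however, contains a genuine gap at precisely the point you flag and then wave away. Writing $\Delta_t=\Delta_{s_2}\exp\left(\int_{s_2}^t\zeta_u\,\diffns u\right)$ forces you to control $E\left[\exp\left(2p\int_{s_2}^t\zeta_u\,\diffns u\right)\right]$ \emph{uniformly in $n$}, where $\zeta_u$ contains $b_{1,n}'$ evaluated along the convex combination $(1-\theta)X^{s_2,x,n}_u+\theta X^{s_1,x,n}_u$. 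Since $\|b_{1,n}'\|_\infty$ blows up as $n\to\infty$ (the limit $b_1$ is only measurable), this cannot be bounded pointwise; the only available tool is the Girsanov plus Gaussian-integration-by-parts machinery of Propositions \ref{pro:bound derivative} and \ref{propmainEstimate}, and that machinery requires the argument of $b_{1,n}'$ to become, after the change of measure, exactly a shifted Brownian path $x+\sigma\cdot B$. Your convex combination equals $\sigma\cdot B_u$ plus an adapted finite-variation process whose initial value $x-(1-\theta)\sigma\cdot B_{s_2}-\theta\sigma\cdot B_{s_1}$ is random and whose drift $(1-\theta)b_n(u,X^{s_2,x,n}_u)+\theta b_n(u,X^{s_1,x,n}_u)$ is not a function of the combination itself. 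Removing it requires a new Girsanov transform with a non-Markovian drift, a verification of Novikov's condition for that density, and a conditional (on ${\cal F}_{s_2}$) application of Proposition \ref{propmainEstimate} with a random starting point controlled through Lemma \ref{lemmaproexpr1}. This is plausibly true for $T$ small, but it is not ``the same Girsanov transform as in Proposition \ref{pro:bound derivative}'', and asserting it does not close the argument.

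The paper sidesteps this difficulty with a different decomposition that never exponentiates $b_{1,n}'$ along anything other than a single solution path started from a deterministic point: the increment over $[s_1,s_2]$ is handled crudely by Cauchy--Schwarz (yielding $|s_1-s_2|^{p/2}$ times moments of the drift along the path), the $b_{1,n}$-difference over $[s_2,t]$ is rewritten via the mean value theorem in the \emph{initial condition} so that only $\partial_xX^{s_1,y,n}$ appears (already controlled in Proposition \ref{pro:bound derivative}), and the $b_2$-difference is absorbed by Gronwall using $|b_2'|\le M_2$ with $M_2$ bounded. If you want to keep your explicit-ODE route you must either carry out the non-Markovian Girsanov step in full, or restructure the temporal increment so that the exponential of $b_{1,n}'$ is only ever taken along one approximating trajectory. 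The concluding Kolmogorov continuity step is correct as stated.
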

\begin{proof}
	Assume without loss of generality that $s_1\le s_2 <t$.
	For ease of notation, put $X^{i,n}:= X^{s_i,x_i,n}$, $i=1,2$.
	Then, for every $n \in \mathbb{N}$, we have
	\begin{align*}
		X^{1, n}_t - X^{2, n}_t & = x_1 - x_2 + \int_{s_1}^tb_{1, n}(u, X^{1,n}_u) + b_2(u,X^{1,n}_u, \omega)\diffns u + \int_{s_1}^t\sigma\cdot \diffns B_u\\
		&\quad - \int_{s_2}^tb_{1, n}(u, X^{2,n}_u) + b_2(u, X^{2,n}_u,\omega)\diffns u - \int_{s_2}^t\sigma\cdot \diffns B_u\\
		& = x_1 - x_2 + \int_{s_1}^{s_2}b_{1,n}(u, X^{1, n}_u) + b_{2}(u,X^{1,n}_u,\omega) \diffns u + \int_{s_2}^tb_{1,n}(u,X^{1,n}_u) - b_{1,n}(u, X^{2,n}_u)\diffns u\\
		&\quad +\int_{s_2}^tb_{2}(u,X^{1,n}_u,\omega) - b_{2}(u, X^{2,n}_u,\omega)\diffns u + \sigma\cdot B_{s_1} - \sigma\cdot B_{s_2}.
	\end{align*}
	Using H\"older continuity of the paths of Brownian motion and the mean-value theorem applied to $x\mapsto b_{1,n}(u,X^{s_1,x}_u)$, this allows to obtain the following estimates:
	\begin{align}
		&\nonumber E\left[ |X^{1, n}_t - X^{2, n}_t|^p\right] \le C_p\left\{|x_1 - x_2|^p + |s_1 - s_2|^{ p/2} +  E\left[\left(\int_{0}^{T}\right.|b_{1,n}(u, X^{1, n}_u) + b_{2}(u,X^{1,n}_u,\omega)|^2 \diffns u\right)^{p/2}\right]|s_1 - s_2|^{p/2}\\
		\nonumber &\quad + |x_1 - x_2|^p E\left[\Big|\left.\int_{s_2}^t\int_0^1 b'_{1,n}(u,X^{s_1,x_1+ \tau(x_2 - x_1),n}_u)\partial_xX^{s_1,x_1+ \tau(x_2 - x_1),n}_u \diffns \tau\diffns u\Big|^p\right] \right\}+CE\left[\int_{s_2}^t|X^{1,n}_u - X^{2,n}_u|^p\diffns u\right]\\
		\nonumber &\le C_p\left\{|x_1 - x_2|^p + |s_1 - s_2|^{ p/2} +  E\left[\left(\int_{0}^{T}\right.|b_{1,n}(u, X^{1, n}_u) + b_{2}(u,X^{1,n}_u,\omega)|^2 \diffns u\right)^{p/2}\right]|s_1 - s_2|^{p/2}\\\notag
		\nonumber &\quad + |x_1 - x_2|^p \left.\int_0^1E\left[\Big||\partial_xX^{s_1,x_1+ \tau(x_2 - x_1),n}_{t} - \partial_xX^{s_1,x_1+ \tau(x_2 - x_1),n}_{s_2}\Big|^p\right]\diffns  \tau \right\} +CE\left[\int_{s_2}^t|X^{1,n}_u - X^{2,n}_u|^p\diffns u\right]\\
		\nonumber &\le C_p\left(|x_1 - x_2|^p + |s_1 - s_2|^{ p/2}\right) \left\{E\left[\left(\int_{0}^{T}\right.|b_{1,n}(u, X^{1, n}_u) + b_{2}(u,X^{1,n}_u,\omega)|^2 \diffns u\right)^{p/2}\right]\\
		&\quad + \left.\sup_{t\in [0,T];|x|\le |x_1|+|x_2|}\sup_{n\in \mathbb{N}}E\left[\Big| \partial_xX^{s_1,x,n}_{t}\Big|^p\right]\right\}+C\int_{s_2}^tE\left[|X^{1,n}_u - X^{2,n}_u|^p\right]\diffns u
		\label{eq:intermed bound dx}.
	\end{align}
	Since $|b_{1,n}(t,x)|\le ||\tilde b_1||_\infty(1  + |x|)$ and $b_2$ has exponential moments (see \ref{a1}) it follows by Jensen's inequality that
	\begin{align*}
		E\left[\left(\int_{0}^{T}|b_{1,n}(u, X^{1, n}_u) + b_{2}(u,\omega)|^2 \diffns u\right)^{p/2 }\right] &\le C_{p}T\left(||\tilde b_1||_\infty E\left[\int_0^T1+|X^{1,n}_u|^{p/2}\,du \right] + b_2^{\mathrm{exp}} \right)\\
		&\le C_pT^2\left(||\tilde b_1||_\infty(1 + \sup_nE\left[|X^{1,n}_t|^{p/2} \right]) + b_2^{\mathrm{exp}} \right).
	\end{align*}
	Since by Lemma \ref{lem:weak conv weak sol} it holds $\sup_nE[|X_t^{1,n}|^{p/2}]<\infty$, it follows by \eqref{eq:intermed bound dx} (after application of Gronwall's inequality) and Proposition \ref{pro:bound derivative} that
	\begin{equation}
	\label{eq:last intermediate dx}
		E\left[ |X^{1, n}_t - X^{2, n}_t|^p\right]\le{\cal C}_p(||\tilde b_1||_\infty, T)\left(|s_1- s_2|^{p/2} + |x_1 - x_2|^p \right).
	\end{equation}
	Let $i=1,2$.
	Since $(X^{s_i, x_i, n}_t)$ converges weakly to the unique solution $X^{s_i,x_i}_t$ of the SDE \eqref{eq:sde into} with drift $b$, (see Lemma \ref{lem:weak conv weak sol} and Theorem \ref{thm:adapted}) it follows by convexity and lower-semicontinuity of $K\mapsto E[|K|^p]$ that, taking the limit in \eqref{eq:last intermediate dx} yields the desired result.
\end{proof}

We conclude this section with the proof of Theorem \ref{thm:flow}.
\begin{proof}[of Theorem \ref{thm:flow}]
	Let $p\ge2$.
	In other to show that $x\mapsto X^{s,x}$ is weakly differentiable, we start by showing that the sequence $(\partial_xX^{s,x,n})_n$ is bounded in $L^2(\Omega, L^p(\mathbb{R},w))$.
	In fact, it follows by H\"older's inequality and Proposition \ref{pro:bound derivative} that
	\begin{align}
			E\left[\left(\int_\mathbb{R}|\partial_xX^{s,x,n}_t|^pw(x)\diffns x \right)^{2/p}\right]&\le E\left[\int_\mathbb{R}|\partial_xX^{s,x,n}_t|^pw(x)\diffns x \right]^{2/p}\notag\\
			&= \left(\int_\mathbb{R}E\left[|\partial_xX^{s,x,n}_t|^p\right]w(x)\diffns x \right)^{2/p}\notag\\
			&\leq  \tilde{C}_{p,T,\sigma,\|\tilde{b}\|_\infty} \left(\int_\mathbb{R}\exp\left(C_{p,T,\sigma,\|\tilde{b}\|_\infty}|x|^{2}\right)w(x)\diffns x \right)^{2/p}<\infty.
			\label{eq:estimate weak derivative1}
		\end{align} 
		
	Thus, $(\partial_xX^{s,x,n}_t)$ admits a weakly converging subsequence $(\partial_xX^{s,x,n_k}_t)$ in $L^2(\Omega, L^p(\mathbb{R},w))$ to a limit $Y^{s,x}_t$.
	Thus, for every $A \in {\cal F}$ and $f \in L^q(\mathbb{R},w)$ (where $q$ is the Sobolev conjugate of $p$), we have
	\begin{equation*}
		\lim_{k\to \infty}E\left[1_A\int\frac{\partial}{\partial x}X^{n_k,x}_tf(x)w(x)\,dx \right] = E\left[ 1_A\int Y^{s,x}_tf(x)w(x)\,dx\right].
	\end{equation*}
	Choosing $f$ such that $fw \in L^q(\mathbb{R},dx)$, it follows that $Y^{s,x}_t$ is the weak derivative of $X^{s,x}_t$.
	By weak convergence and \eqref{eqapprosobder1},
	it follows that
	\begin{equation*}
		E\left[\left(\int_\mathbb{R}|\partial_xX^{s,x}_t|^pw(x)\diffns x \right)^{2/p}\right]<\infty,
	\end{equation*}
	where $\partial_xX^{s,x}_t$ denotes the weak derivative of $X^{s,x}_t$.

	It remains to show that
	\begin{equation}
	\label{eq:last bound}
		E\left[\left(\int_\mathbb{R}|X^{s,x}_t|^pw(x)\diffns x \right)^{2/p}\right]
		<\infty.
	\end{equation}
	By Jensen's inequality and the linear growth of $b_1$, we have
	\begin{align*}
		&\int_\mathbb{R}|X^{s,x}_t|^pw(x)\,dx\\
		 &\quad\le C_{p}\left(\int_\mathbb{R}|x|^pw(x)\diffns x + t^{p-1}\int_\mathbb{R}\int_0^t(1 + |X^{s,x}_u|^p)w(x)\diffns u\diffns x + t^{p-1}\left\{\int_0^t|b_2(u,X^{s,x}_u,\omega)|^p\diffns u + |\sigma\cdot B_t|^p\right\}\int_\mathbb{R}w(x)\diffns x \right)\\
		&\quad\le C_{p,T}\left(\int_\mathbb{R}|x|^pw(x)\diffns x + \int_0^t\int_\mathbb{R} |X^{s,x}_u|^pw(x)\diffns x\diffns u + \left\{CTM_2^p + \sup_{t\in [0,T]}|\sigma\cdot B_t|^p\right\}\int_\mathbb{R}w(x)\diffns x \right).
	\end{align*}
	By Gronwall's inequality, this implies
	\begin{align*}
		\int_\mathbb{R}|X^{s,x}_t|^pw(x)\,dx \le C_{p,T}\left(\int_\mathbb{R}|x|^pw(x)\diffns x + \left\{T+CTM_2^p + \sup_{t\in [0,T]}|\sigma\cdot B_t|^p\right\}\int_\mathbb{R}w(x)\diffns x \right)e^{T^2}.
	\end{align*}
	Thus, it follows by H\"older's inequality that
	\begin{align*}
		&E\left[\left(\int_\mathbb{R}|X^{s,x}_t|^pw(x)\diffns x \right)^{1/p}\right]	\le E\left[\int_\mathbb{R}|X^{s,x}_t|^pw(x)\diffns x \right]^{1/p}\\
		 &\quad \le C_{p,T}\left(\int_\mathbb{R}|x|^pw(x)\diffns x + \left\{T+CTE\left[M_2^p\right] + |\sigma^p|E\left[\sup_{t\in [0,T]}| B_t|^p\right]\right\}\int_\mathbb{R}w(x)\diffns x \right)^{1/p}<\infty.
	\end{align*}
This completes the proof since by \eqref{eq:expo moment b2} $M_2$ has exponential moments.
\end{proof}

\begin{appendix}
\section{Compactness criteria} 

The suggested construction of the strong solution for the SDE \eqref{eqmain1r} is based on the subsequent relative compactness criteria from Malliavin calculus (see \cite{DPMN92}.)
 	 	
\begin{theorem}
 	\label{MCompactness}Let $\left\{ \left( \Omega ,\mathcal{A},P\right) ;H\right\} $ be a Gaussian probability space, that is $\left( \Omega ,%
 	\mathcal{A},P\right) $ is a probability space and $H$ a separable closed subspace of Gaussian random variables in $L^{2}(\Omega )$, which generate the $\sigma $-field $\mathcal{A}$. Denote by $\mathbf{D}$ the derivative operator acting on elementary smooth random variables in the sense that%
 	\begin{equation*}
 		\mathbf{D}(f(h_{1},\ldots,h_{n}))=\sum_{i=1}^{n}\partial
 		_{i}f(h_{1},\ldots,h_{n})h_{i},\text{ }h_{i}\in H,f\in C_{b}^{\infty }(\mathbb{R%
 		}^{n}).
 	\end{equation*}%
 	Further let $\mathbf{D}_{1,2}$ be the closure of the family of elementary smooth random variables with respect to the norm%
 	\begin{equation*}
 		\left\Vert F\right\Vert _{1,2}:=\left\Vert F\right\Vert _{L^{2}(\Omega)}+\left\Vert \mathbf{D}F\right\Vert _{L^{2}(\Omega ;H)}.
 	\end{equation*}%
	Assume that $C$ is a self-adjoint compact operator on $H$ with dense image.
	Then for any $c>0$ the set%
	\begin{equation*}
		\mathcal{G}=\left\{ G\in \mathbf{D}_{1,2}:\left\Vert G\right\Vert_{L^{2}(\Omega )}+\left\Vert C^{-1} \mathbf{D} \,G\right\Vert _{L^{2}(\Omega ;H)}\leq c\right\}
 	\end{equation*}%
 	is relatively compact in $L^{2}(\Omega )$.
\end{theorem}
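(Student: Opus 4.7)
The plan is to combine the spectral decomposition of the compact self-adjoint operator $C$ with the Wiener chaos decomposition in $L^2(\Omega)$, thereby reducing relative compactness to a uniform finite-dimensional approximation estimate.

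\textbf{Spectral and chaos setup.} Since $C$ is compact and self-adjoint with dense range, I choose an orthonormal basis $(e_k)_{k\ge 1}$ of $H$ of eigenvectors with $Ce_k=\lambda_k e_k$; density of the range forces all $\lambda_k>0$ and, rearranging, $\lambda_k\downarrow 0$. Simultaneously, every $G\in L^2(\Omega)$ admits the Wiener chaos expansion $G=\sum_{n\ge 0}I_n(g_n)$ with symmetric kernels $g_n\in H^{\otimes n}$, and $\|G\|_{L^2}^{2}=\sum_{n}n!\,\|g_n\|^{2}$. For $G\in\mathbf{D}_{1,2}$ one has $\mathbf{D}G=\sum_{n\ge 1}n\,I_{n-1}(g_n(\cdot_{n-1},\star))$, and applying $C^{-1}$ on the free coordinate, then expanding in the basis as $g_n=\sum_{\mathbf{k}}a_n(\mathbf{k})\,e_{k_1}\otimes\cdots\otimes e_{k_n}$, the hypothesis becomes
\[
\|G\|_{L^2}^{2}+\sum_{n\ge 1}n^{2}(n-1)!\sum_{\mathbf{k}}\lambda_{k_n}^{-2}\,a_n(\mathbf{k})^{2}\le c^{2}.
\]
Moreover, since $\mathbf{D}G=C(C^{-1}\mathbf{D}G)$ with $C$ bounded, we obtain the a priori consequence $\|\mathbf{D}G\|_{L^2(\Omega;H)}^{2}\le \|C\|_{\mathrm{op}}^{2}c^{2}$, i.e.\ $\sum_{n}n\cdot n!\,\|g_n\|^{2}\le \|C\|_{\mathrm{op}}^{2}c^{2}$.

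\textbf{Finite-dimensional truncation.} For integers $N,K$ let $P_K$ be the orthogonal projection in $H$ onto $\mathrm{span}(e_1,\ldots,e_K)$, and define $G^{N,K}:=\sum_{n\le N}I_n(P_K^{\otimes n}g_n)$. The set $\{G^{N,K}:G\in\mathcal{G}\}$ lies in a fixed finite-dimensional subspace of $L^2(\Omega)$ and is bounded by $c$, hence relatively compact. The error splits into a high-chaos tail and a high-mode tail. The first is controlled by
\[
\sum_{n>N}n!\,\|g_n\|^{2}\le\frac{1}{N+1}\sum_{n\ge 0}(n+1)!\,\|g_n\|^{2}\le\frac{\|C\|_{\mathrm{op}}^{2}c^{2}}{N+1}.
\]
For the second, symmetry of $a_n$ gives $\sum_{n\le N}n!\sum_{\mathbf{k}:\max_i k_i>K}a_n(\mathbf{k})^{2}\le \sum_{n\le N}n\cdot n!\sum_{k_n>K}a_n(\mathbf{k})^{2}$, and since $k_n>K$ implies $\lambda_{k_n}^{2}\le\lambda_{K+1}^{2}$ (decreasing eigenvalues), the right-hand side is bounded by
\[
\lambda_{K+1}^{2}\sum_{n\le N}n^{2}(n-1)!\sum_{\mathbf{k}}\lambda_{k_n}^{-2}a_n(\mathbf{k})^{2}\le \lambda_{K+1}^{2}\,c^{2}\;\xrightarrow[K\to\infty]{}\;0.
\]

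\textbf{Conclusion and main obstacle.} Given $\varepsilon>0$, first pick $N$ so that $\|C\|_{\mathrm{op}}^{2}c^{2}/(N+1)<\varepsilon/2$, then $K$ so that $\lambda_{K+1}^{2}c^{2}<\varepsilon/2$. Then $\|G-G^{N,K}\|_{L^2}^{2}<\varepsilon$ uniformly on $\mathcal{G}$, so $\mathcal{G}$ lies within $L^{2}$-distance $\sqrt{\varepsilon}$ of a relatively compact set. Hence $\mathcal{G}$ is totally bounded, and by completeness of $L^{2}(\Omega)$ it is relatively compact. The only delicate point is the high-mode estimate: one must exploit symmetry of the chaos kernels to turn the $C^{-1}$-control (formally on the last $H$-coordinate only) into simultaneous control on all coordinates, so that the two decompositions, by chaos order and by eigenbasis of $C$, can be truncated jointly and uniformly on $\mathcal{G}$.
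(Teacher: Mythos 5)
The paper does not prove this theorem: it is quoted in Appendix A as a known result with a citation to Da Prato--Malliavin--Nualart \cite{DPMN92}, so there is no in-paper proof to compare against. Your argument is correct and is essentially the classical proof from that reference: Wiener chaos decomposition, the identity $\|\mathbf{D}I_n(g_n)\|^2_{L^2(\Omega;H)}=n\cdot n!\,\|g_n\|^2$, diagonalization of $C$, and a joint truncation in chaos order and eigenmode made uniform over $\mathcal{G}$ via the symmetry of the kernels --- which is indeed the one genuinely delicate step, and you handle it correctly. Two cosmetic points: density of the range of a compact self-adjoint $C$ only forces $\lambda_k\neq 0$ (not $\lambda_k>0$), so you should order by decreasing $|\lambda_k|$ and work with $\lambda_k^2$ throughout (which is all you actually use); and in the high-chaos tail, $\sum_{n\ge 0}(n+1)!\,\|g_n\|^2=\|\mathbf{D}G\|^2_{L^2(\Omega;H)}+\|G\|^2_{L^2}$, so the bound should read $(\|C\|^2_{\mathrm{op}}+1)c^2/(N+1)$ (or simply use $n!\le n\cdot n!/(N+1)$ for $n>N$ to avoid the zeroth term). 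Neither affects the validity of the proof.
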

 	
The relative compactness criteria in this paper required the following result (see \cite[Lemma 1] {DPMN92}).
 	
\begin{lemma}
 	\label{DaPMN} Let $v_{s},s\geq 0$ be the Haar basis of $L^{2}([0,1])$. For any $0<\alpha <1/2$ define the operator $A_{\alpha }$ on $L^{2}([0,1])$ by%
 	\begin{equation*}
 		A_{\alpha }v_{s}=2^{k\alpha }v_{s}\text{, if }s=2^{k}+j\text{ }
 	\end{equation*}%
 	for $k\geq 0,0\leq j\leq 2^{k}$ and%
 	\begin{equation*}
 		A_{\alpha }1=1.
 	\end{equation*}%
 	Then for all $\beta $ with $\alpha <\beta <(1/2),$ there exists a constant $%
 	c_{1}$ such that%
 	\begin{equation*}
 		\left\Vert A_{\alpha }f\right\Vert \leq c_{1}\Big\{ \left\Vert f\right\Vert_{L^{2}([0,1])}+\Big( \int_{0}^{1}\int_{0}^{1}\frac{\left\vert f(t)-f(t^{\prime })\right\vert ^{2}}{\left\vert t-t^{\prime }\right\vert^{1+2\beta }}\diffns t\diffns t^{\prime }\Big) ^{1/2}\Big\}.
 	\end{equation*}
\end{lemma}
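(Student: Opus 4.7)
The plan is to expand $f$ in the Haar basis and reduce the estimate on $\|A_\alpha f\|_{L^2}$ to a sum over dyadic scales $k$, each term of which is controlled by integrating the fractional Sobolev integrand over a disjoint collection of rectangles of side $2^{-k-1}$ near the diagonal of $[0,1]^2$. Convergence of the resulting geometric series in $k$ will use precisely the assumption $\beta>\alpha$.

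More concretely, I would write $f = \langle f,1\rangle\cdot 1 + \sum_{k\ge 0}\sum_{j=0}^{2^k-1}\langle f,v_{2^k+j}\rangle\,v_{2^k+j}$, where $v_{2^k+j} = 2^{k/2}(\mathbf{1}_{I^-_{k,j}} - \mathbf{1}_{I^+_{k,j}})$ and $I^\pm_{k,j}$ denote the left and right halves of the dyadic interval $I_{k,j}=[j2^{-k},(j+1)2^{-k}]$, each of length $h=2^{-k-1}$. By orthonormality of the Haar system,
\begin{equation*}
\|A_\alpha f\|_{L^2}^2 = |\langle f,1\rangle|^2 + \sum_{k\ge 0}\sum_{j=0}^{2^k-1} 2^{2k\alpha}|\langle f,v_{2^k+j}\rangle|^2.
\end{equation*}
The elementary identity $\int_{I^-} f\,dt - \int_{I^+} f\,dt' = h^{-1}\int_{I^-}\int_{I^+}(f(t)-f(t'))\,dt\,dt'$ lets me rewrite each Haar coefficient as a double integral of the increment $f(t)-f(t')$ over $I^-_{k,j}\times I^+_{k,j}$, and Cauchy--Schwarz then yields $|\langle f,v_{2^k+j}\rangle|^2 \le 2^k\int_{I^-_{k,j}}\int_{I^+_{k,j}}|f(t)-f(t')|^2\,dt\,dt'$.

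The key step is the scale-matching: for $(t,t')\in I^-_{k,j}\times I^+_{k,j}$ we have $|t-t'|\le 2^{-k}$, so
\begin{equation*}
|f(t)-f(t')|^2 \le 2^{-k(1+2\beta)}\,\frac{|f(t)-f(t')|^2}{|t-t'|^{1+2\beta}}.
\end{equation*}
Inserting this into the bound above and collecting powers gives
\begin{equation*}
2^{2k\alpha}|\langle f,v_{2^k+j}\rangle|^2 \le 2^{-2k(\beta-\alpha)}\int_{I^-_{k,j}}\int_{I^+_{k,j}}\frac{|f(t)-f(t')|^2}{|t-t'|^{1+2\beta}}\,dt\,dt'.
\end{equation*}
Since for fixed $k$ the rectangles $I^-_{k,j}\times I^+_{k,j}$, $j=0,\dots,2^k-1$, are pairwise disjoint subsets of $[0,1]^2$, summing in $j$ yields an upper bound by $2^{-2k(\beta-\alpha)}\iint\frac{|f(t)-f(t')|^2}{|t-t'|^{1+2\beta}}\,dt\,dt'$. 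Summing in $k\ge 0$ then gives a geometric series $\sum_{k\ge 0}2^{-2k(\beta-\alpha)}$, which converges exactly because $\beta>\alpha$. Combined with the trivial bound $|\langle f,1\rangle|^2\le \|f\|_{L^2}^2$, this establishes the desired inequality after taking square roots.

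The only delicate point is the bookkeeping of powers of $2$ at the Cauchy--Schwarz step and the need for the strict inequality $\beta>\alpha$ to sum the geometric series; otherwise the argument reduces to the two observations that (i) a Haar coefficient at scale $2^{-k}$ is an $L^2$-average of increments of $f$ over two adjacent intervals of length $2^{-k-1}$, and (ii) the disjointness of the associated rectangles allows a clean passage to a single $[0,1]^2$ integral. No further technical machinery is required.
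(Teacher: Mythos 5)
Your proof is correct. Note first that the paper itself does not prove this lemma: it is quoted verbatim from Da Prato--Malliavin--Nualart \cite[Lemma 1]{DPMN92} and used as a black box in the compactness argument, so there is no in-paper proof to compare against. Your argument is the standard one and it is complete: the identity $\int_{I^-}f\,\diffns t-\int_{I^+}f\,\diffns t'=h^{-1}\int_{I^-}\int_{I^+}(f(t)-f(t'))\,\diffns t\,\diffns t'$ with $h=2^{-k-1}$ is exact, the Cauchy--Schwarz step gives $|\langle f,v_{2^k+j}\rangle|^2=2^{k}\cdot 2^{2k+2}\,|\!\int\!\!\int(f-f')|^2\le 2^{k}\int_{I^-}\int_{I^+}|f(t)-f(t')|^2\,\diffns t\,\diffns t'$ as you claim, and the exponent bookkeeping $2^{2k\alpha}\cdot 2^{k}\cdot 2^{-k(1+2\beta)}=2^{-2k(\beta-\alpha)}$ checks out, with the disjointness of the rectangles $I^-_{k,j}\times I^+_{k,j}$ (for fixed $k$) and the condition $\beta>\alpha$ doing exactly the work you say they do. Two cosmetic remarks: the index range in the statement should read $0\le j<2^k$ (as you implicitly use), a typo inherited from the source; and the final passage from $\|A_\alpha f\|^2\le \|f\|^2+c\,[f]_\beta^2$ to the stated inequality uses $\sqrt{a+b}\le\sqrt a+\sqrt b$, which is worth one line but is immediate.
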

The next compactness criteria comes from Theorem \ref{MCompactness} and Lemma \ref{DaPMN}.

\begin{corollary} \label{compactcrit}
	Let $\{X_n\}_{n\geq 1}\in \mathbf{D}_{1,2}$, be a sequence of $\mathcal{F}_1$-measurable random variables such that there exist constants $\alpha > 0$ and $C>0$ with
 	$$
 		\sup_n E \left[ | D_t X_n - D_{t'} X_n |^2 \right] \leq C |t -t'|^{\alpha}
 	$$
 	for $0 \leq t' \leq t \leq 1$, and
 	$$
 		\sup_n\sup_{0 \leq t \leq 1} E \left[ | D_t X_n |^2 \right] \leq C \,.
 	$$
 	Then the sequence $\{X_n\}_{n\geq 1}$, is relatively compact in $L^{2}(\Omega )$.
 	\end{corollary}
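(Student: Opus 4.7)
The strategy is a direct application of the abstract Gaussian compactness theorem (Theorem \ref{MCompactness}) with a compact self-adjoint operator built from the Haar-basis operator $A_\alpha$ of Lemma \ref{DaPMN}. I would take the Gaussian Hilbert space to be $H = L^2([0,1];\mathbb{R}^d)$ (the natural space for the Malliavin derivative of an $\mathcal F_1$-measurable functional of a $d$-dimensional Brownian motion). To avoid a clash of notation, let $\gamma>0$ denote the H\"older exponent supplied by the hypothesis, i.e.\ $\sup_n E|D_tX_n - D_{t'}X_n|^2\le C|t-t'|^\gamma$. Choose $\alpha,\beta$ with $0<\alpha<\beta<\gamma/2$ (so in particular $\beta<1/2$), and set $C:=A_\alpha^{-1}$. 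On the Haar basis $A_\alpha$ acts by multiplication by $2^{k\alpha}$ (and by $1$ on the constant), hence $C$ is self-adjoint, has strictly positive eigenvalues $2^{-k\alpha}\to 0$, so $C$ is compact with dense range, meeting the hypotheses of Theorem \ref{MCompactness}.

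The next step is to apply Lemma \ref{DaPMN} path-by-path to $f(t):=D_tX_n(\omega)$ (componentwise in the $d$ Brownian directions) and then square. This gives the pointwise-in-$\omega$ bound
\begin{equation*}
\|C^{-1}\mathbf{D}X_n\|_H^2 \;=\; \|A_\alpha \mathbf{D}X_n\|_H^2 \;\le\; 2c_1^2\Bigl\{\|\mathbf{D}X_n\|_H^2 + \int_0^1\!\!\int_0^1\frac{|D_tX_n-D_{t'}X_n|^2}{|t-t'|^{1+2\beta}}\,\mathrm{d}t\,\mathrm{d}t'\Bigr\}.
\end{equation*}
Taking expectation and using the two standing hypotheses, the first expectation is bounded uniformly in $n$ by $2c_1^2 C$ (from $\sup_n\sup_tE|D_tX_n|^2\le C$), and the second is bounded by $2c_1^2 C\int_0^1\!\!\int_0^1|t-t'|^{\gamma-1-2\beta}\,\mathrm{d}t\,\mathrm{d}t'$, which is finite precisely because we arranged $2\beta<\gamma$. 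Hence $\sup_n E\|C^{-1}\mathbf{D}X_n\|_H^2<\infty$.

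To close the argument I still need a uniform $L^2(\Omega)$ bound on $X_n$, which is not explicitly listed among the hypotheses but is built into the setting of Theorem \ref{MCompactness} (and is trivially satisfied in every application of the corollary in this paper, e.g.\ by the estimate carried out in \eqref{eqweaklim1}). Alternatively, it follows from the Wiener-space Poincar\'e inequality $\|X_n-E[X_n]\|_{L^2(\Omega)}\le\|\mathbf{D}X_n\|_{L^2(\Omega;H)}$ once one knows the means $E[X_n]$ are bounded. Combining these bounds places every $X_n$ in the set $\mathcal G$ of Theorem \ref{MCompactness} for a sufficiently large constant $c$, so $\{X_n\}$ is relatively compact in $L^2(\Omega)$. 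The only genuinely delicate point is the window $\alpha<\beta<\gamma/2$: this is the single place in the proof where the strict positivity of the H\"older exponent $\gamma$ is used, and it is what forces $\alpha$ (and hence the compact operator $C=A_\alpha^{-1}$) to depend on $\gamma$.
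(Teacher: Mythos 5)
Your proof is correct and is exactly the argument the paper leaves implicit: it proves the corollary by combining Theorem \ref{MCompactness} with the operator $C=A_\alpha^{-1}$ and the bound of Lemma \ref{DaPMN}, which is the standard Da Prato--Malliavin--Nualart route, and your observation that a uniform $L^2(\Omega)$ bound on $X_n$ must be supplied separately (it holds in every application here) is a fair and correctly handled point. The only cosmetic slip is the parenthetical claim that $\beta<\gamma/2$ forces $\beta<1/2$; one should simply choose $\beta<\min(1/2,\gamma/2)$, which changes nothing in the argument.
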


	\section{An auxiliary result}

	The following key result generalises \cite[Proposition 2.2.]{Da07} to the case of function with spatial polynomial growth.
	
	\begin{proposition} \label{propmainEstimate}
		Let $B$ be a $d$-dimensional Brownian motion starting from the origin and $ b: [0,1] \times \mathbb{R} \rightarrow \mathbb{R}$ a compactly supported smooth function such that $\|\tilde{b}(t,z)\|\leq k$ with $\tilde{b}(t,z):= \frac{b(t,z)}{1+|z|}, \, (t,z) \in [0,1] \times \mathbb{R}$. Set $B_\sigma(t)=\sum_{i=1}^d\sigma_iB_i(t)\simeq N(0,t\|\sigma\|^2)$ for $\sigma\in \mathbb{R}^d$. Then there exists a constant $C$ depending on $ \sigma$ such that for any even positive number $n$, we have
		\begin{equation}
		\label{propestimate1}
		E \left[\left( \int_{t_0}^t b^\prime(t,x+B_\sigma(t))  \diffns t\right)^n \right]  \leq C^n_{\sigma,k}(1+|x|^{n})(\frac{n}{2})! (t-t_0)^{n/2}.
		\end{equation}
	\end{proposition}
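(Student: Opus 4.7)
My plan is to extend the argument of \cite[Proposition 2.2]{Da07} from the bounded case to the linearly growing case, preserving the $(n/2)!$ Stirling-type prefactor. First, by symmetrization over the ordered simplex $\mathcal{S}_n := \{t_0 \le s_1 \le \cdots \le s_n \le t\}$, I would write
\begin{equation*}
E\left[\left(\int_{t_0}^t b'(r, x+B_\sigma(r))\diffns r\right)^n\right] = n!\int_{\mathcal{S}_n} E\left[\prod_{i=1}^n b'(s_i, x+B_\sigma(s_i))\right]\diffns s_1\cdots \diffns s_n,
\end{equation*}
and use the Markov property of $B_\sigma$ to represent the inner expectation as the Gaussian integral $\int_{\mathbb{R}^n}\prod_{i=1}^n b'(s_i, x+y_i)\prod_{i=1}^n p_i(y_i-y_{i-1})\diffns y_1\cdots \diffns y_n$, where $p_i$ is the $N(0,\Delta_i\|\sigma\|^2)$ density with $\Delta_i := s_i-s_{i-1}$ and conventions $y_0 := 0$, $s_0 := t_0$.

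Next I would exploit the identity $\prod_{i=1}^n b'(s_i,x+y_i) = \partial_{y_1}\cdots \partial_{y_n}\prod_{i=1}^n b(s_i,x+y_i)$ and integrate by parts simultaneously in every variable $y_i$, obtaining
\begin{equation*}
(-1)^n\int_{\mathbb{R}^n}\prod_{i=1}^n b(s_i,x+y_i)\cdot H_n(y)\cdot G(y)\diffns y_1\cdots \diffns y_n,
\end{equation*}
where $G := \prod_i p_i(y_i-y_{i-1})$ and $H_n := \partial_{y_1}\cdots \partial_{y_n} G/G$. Since $\log G$ is quadratic in $y$ and its Hessian is tridiagonal, Wick's theorem yields an expansion of $H_n$ as a sum, over pair-partitions $\pi$ of $\{1,\ldots,n\}$ with singletons allowed, of products of factors $\pm 1/(\Delta_k\|\sigma\|^2)$ (for each pair of adjacent indices) and affine factors $\pm (y_k-y_{k-1})/(\Delta_k\|\sigma\|^2)$ (for each singleton). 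Using $|b(s,z)|\le k(1+|z|)$ and Cauchy-Schwarz, I would bound each Wick term by a constant $C^n$ times $E[\prod_i(1+|x+B_\sigma(s_i)|)]$ times a factor $\prod_i\Delta_i^{-\alpha_i}$ with $\sum_i\alpha_i = n/2$. The moment $E[\prod_i(1+|x+B_\sigma(s_i)|)]$ is controlled by H\"older and Doob's maximal inequality as $C^n(1+|x|^n) + C^n_\sigma(n/2)!(t-t_0)^{n/2}$, while the number of Wick terms is bounded by $(n-1)!!\le 2^{n/2}(n/2)!$ (by a Stirling computation).

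Combining these estimates with the Dirichlet integral $\int_{\mathcal{S}_n}\prod_i\Delta_i^{-1/2}\diffns s = \pi^{n/2}(t-t_0)^{n/2}/(n/2)!$ and the ratio $n!/((n/2)!)^2\le 2^n$ coming from the initial symmetrization then yields the claimed bound $C_{\sigma,k}^n(1+|x|^n)(n/2)!(t-t_0)^{n/2}$. The main obstacle is to keep the polynomial degree in $x$ bounded by $n$ rather than exponentially in $n$: performing the integration by parts sequentially (one variable at a time combined with Cauchy-Schwarz applied to $(1+|x+y_i|)$) effectively squares the accumulated polynomial at each step and produces a spurious factor $(1+|x|)^{2^n}$. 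Performing all $n$ integrations by parts simultaneously, and exploiting the tridiagonal structure of the covariance of $(B_\sigma(s_i))_{i=1}^n$ to keep the Wick expansion manageable, avoids this blow-up and is the essential new ingredient compared with the bounded-drift case treated in \cite{Da07}.
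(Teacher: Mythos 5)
Your reduction to the ordered simplex and the representation of the inner expectation as a Gaussian integral against $G(y)=\prod_{i=1}^n p_i(y_i-y_{i-1})$ are fine, but the term-by-term estimate of the simultaneous integration by parts breaks down at the pair terms of your Wick expansion, and this is a genuine gap. A pair $\{j,j+1\}$ corresponds to both derivatives $\partial_{y_j}\partial_{y_{j+1}}$ falling on the \emph{same} kernel $p_{j+1}(y_{j+1}-y_j)$ and contributes the constant factor $1/(\Delta_{j+1}\|\sigma\|^2)$, i.e.\ the exponent profile $\alpha_{j+1}=1$, $\alpha_j=0$, not $\alpha_j=\alpha_{j+1}=1/2$. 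Your sum $\sum_i\alpha_i=n/2$ is correct, but the Dirichlet formula $\int_{\mathcal{S}_n}\prod_i\Delta_i^{-\alpha_i}\diffns s=(t-t_0)^{n-\sum_i\alpha_i}\prod_i\Gamma(1-\alpha_i)/\Gamma(n+1-\sum_i\alpha_i)$ requires \emph{every} $\alpha_i<1$, and with $\alpha_{j+1}=1$ it diverges. Already for $n=2$ the offending term is $\int b(s_1,x+y_1)\,b(s_2,x+y_2)\,p_1(y_1)\,p_2''(y_2-y_1)\diffns y_1\diffns y_2$, whose absolute value can only be bounded by $C(1+|x|)^2\,\Delta_2^{-1}$ using the hypothesis on $b$ alone, and $\int_{t_0<s_1<s_2<t}(s_2-s_1)^{-1}\diffns s_1\diffns s_2=\infty$. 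Re-integrating by parts to tame such a term necessarily puts a derivative back on one of the $b$'s, which is exactly what the argument must avoid; so the absolute-value, term-by-term bound of the simultaneous expansion cannot close.

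This is precisely the difficulty that Davie's original argument, and its linear-growth extension in \cite[Proposition 4.10]{MeMo17}, are built to circumvent; note that the paper's own proof does not redo this work but reduces the claim to the bound $|J_n|\le C^n(1+|x|^n)(t-t_0)^{n/2}/\Gamma(n/2+1)$ and then simply cites that proposition. There the integration by parts is performed backward, one variable at a time, and the induction carries \emph{two} quantities: the partially integrated kernel $\rho_k$ and its spatial derivative, the latter weighted by $(t-s_k)^{-1/2}$ measured from the terminal time rather than from the previous time point. One then never estimates a second derivative of a heat kernel in absolute value; instead each step produces a convergent Beta-type integral $\int_{s_{k-1}}^{t}(s_k-s_{k-1})^{-1/2}(t-s_k)^{-1/2}\diffns s_k=\pi$, and accumulating these is what yields the $\Gamma(n/2+1)^{-1}$. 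That weighted two-quantity induction also keeps the polynomial weight in $x$ of degree growing linearly in the number of steps, which addresses the $(1+|x|)^{2^n}$ blow-up you rightly worry about. As written, however, your argument does not establish the estimate; to make it self-contained you would need to reproduce the backward induction of \cite{Da07} and \cite{MeMo17} rather than the one-shot Wick expansion.
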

	
	\begin{proof}
		As in \cite{Da07}, the proof is split into several parts.
		
		Let $P_\sigma(t,z) = (2 \pi t\|\sigma\|^2)^{-1/2} e^{-|z|^2/2t\|\sigma\|^2}$ be the Gaussian kernel, then using the joint distribution of $B(t_1),\ldots,B(t_n)$, the left hand side of \eqref{propestimate1} can be written as
		$$
		n!   \int_{t_0 < t_1 < \dots < t_n < t} \int_{\mathbb{R}^{n}} \prod_{i=1}^n   b^\prime(t_i,x+z_i) P_\sigma(t_i - t_{i-1}, z_i - z_{i-1} ) \diffns z_1 \dots \diffns z_n \diffns t_1 \dots \diffns t_n    \,.
		$$
		Define
		$$
		J_n (t_0,t, x,z_0) := \int_{ t_0 < t_1 < \dots < t_n < t} \int_{\mathbb{R}^{n}} \prod_{i=1}^n  b^\prime(t_i,x+z_i) P_\sigma(t_i - t_{i-1}, z_i - z_{i-1} ) \diffns z_1 \dots \diffns z_n \diffns t_1 \dots \diffns t_n,
		$$
		The proposition will be proved if we show that $$|J_n(t_0,t,0)| \leq C_p^n(t-t_0)^{n/2} (1+|x|^n)/ \Gamma( n/2 + 1).$$
Note that the above comes from Proposition 4.10. in \citet{MeMo17}. The result then follows.
	\end{proof}
\end{appendix}


\vspace{1cm}

\small{  \noindent \textbf{Olivier Menoukeu-Pamen}: University of Liverpool Institute for Financial and Actuarial Mathematics, Department of Mathematical Sciences,
L69 7ZL, United Kingdom and African Institute for Mathematical Sciences, Ghana.\\
\small{\textit{E-mail address:} menoukeu@liverpool.ac.uk\\
Financial support from the Alexander von
Humboldt Foundation, under the programme financed by the German Federal Ministry of Education and Research
entitled German Research Chair No 01DG15010 is gratefully acknowledged.}
  \vspace{.2cm}

  \noindent \textbf{Ludovic Tangpi}: Department of Operations Research and Financial Engineering, Princeton University. 08544, Princeton, NJ, USA 
  {\small\textit{E-mail address:} ludovic.tangpi@princeton.edu}\\
}
\end{document}